\documentclass[11pt, leqno]{amsart}
\usepackage{amsmath}
\usepackage{amsthm}
\usepackage{newtxtext, newtxmath}
\usepackage{mathrsfs}

\usepackage[colorinlistoftodos]{todonotes}
\renewcommand{\nu}{\upsilon}

\usepackage{graphicx}
\usepackage[font=small,labelfont=bf]{caption}
\usepackage{epstopdf}
\usepackage{xcolor}
\usepackage{float}
\usepackage{pgfplots}
\usepackage{listings}
\usepackage{longtable}
\usepackage{comment}
\usepackage{esint}
\usepackage{nicefrac}
\usepackage{tikz}
\usetikzlibrary{arrows, patterns, patterns.meta, calc, math}
\usepackage[normalem]{ulem} 
\usepackage[parfill]{parskip}
\usepackage{esint}
\usepackage{linegoal}

\usepackage{mathtools}
\usepackage[utf8]{inputenc}
\usepackage[T1]{fontenc}
\usepackage[shortlabels, inline]{enumitem}
\DeclarePairedDelimiterX\set[1]\lbrace\rbrace{#1}

\usepackage{scalerel}[2014/03/10]
\usepackage[usestackEOL]{stackengine}

\usepackage[backref=page]{hyperref}
\hypersetup{
plainpages=false,
colorlinks,
linkcolor={cyan!90!black},
citecolor={magenta},
urlcolor={red!90!black},
bookmarksdepth=2,} 
\usepackage[noadjust]{cite}

\hypersetup{
pdftitle={Boundedness of the parabolic Riesz Transform},
pdfsubject={Mathematics, PDE, Analysis},
pdfauthor={Khalid Baadi, Moritz Egert, Benjamin W. Kosmala},
pdfkeywords={}
}

\newtheorem{thm}{Theorem}[section]
\newtheorem{cor}[thm]{Corollary}
\newtheorem{prop}[thm]{Proposition}
\newtheorem{lem}[thm]{Lemma}

\theoremstyle{definition}
\newtheorem{defn}[thm]{Definition}

\theoremstyle{remark}
\newtheorem{rem}[thm]{Remark}

\definecolor{Khalid}{rgb}{0.8, 0.1, 0.1}
\definecolor{Benjamin}{rgb}{0.0, 0.5, 0.8}
\definecolor{Moritz}{rgb}{0.7, 0.1, 0.6}

\newcommand{\Z}{\mathcal{Z}}
\renewcommand{\L}{\mathrm{L}}

\newcommand{\Cont}{\mathrm{C}}
\newcommand{\e}{\mathrm{e}}
\newcommand{\ii}{\mathrm{i}}
\renewcommand{\d}{\mathrm{d}}
\newcommand{\dd}{\,\mathrm{d}}
\newcommand{\E}{\mathcal{E}}
\newcommand{\EE}{\mathrm{E}}

\newcommand{\R}{\mathbb{R}}
\newcommand{\C}{\mathbb{C}}
\newcommand{\N}{\mathbb{N}}
\newcommand{\D}{\mathbb{D}}
\newcommand{\norm}[1]{\Vert#1\Vert}
\newcommand{\abs}[1]{\vert#1\vert}
\newcommand{\one}{\boldsymbol{1}}
\newcommand{\F}{\mathcal{F}}
\newcommand{\M}{\mathrm{M}}
\renewcommand{\H}{\mathcal{H}}
\newcommand{\eps}{\varepsilon}

\DeclareMathOperator{\Avg}{Avg}
\DeclareMathOperator{\Div}{div}

\def\Xint#1{\mathchoice
{\XXint\displaystyle\textstyle{#1}}%
{\XXint\textstyle\scriptstyle{#1}}%
{\XXint\scriptstyle\scriptscriptstyle{#1}}%
{\XXint\scriptscriptstyle%
\scriptscriptstyle{#1}}%
\!\int}
\def\XXint#1#2#3{{\setbox0=\hbox{$#1{#2#3}{%
\int}$ }
\vcenter{\hbox{$#2#3$ }}\kern-.6\wd0}}
\def\barint{\,\Xint -} 
\def\bariint{\barint_{} \kern-.4em \barint}
\def\bariiint{\bariint_{} \kern-.4em \barint}

\newcommand{\supp}{\mathrm{supp}}

\newcommand{\dom}{\mathrm{dom}}
\newcommand{\RH}{\mathcal{R}_\H}
\renewcommand{\c}{\mathrm{c}}

\colorlet{shadecolor}{gray!20}
\pgfplotsset{compat=1.9}
\usepgflibrary{fpu}
\makeatletter
\let\c@equation\c@thm
\makeatother
\numberwithin{equation}{section}

\author{Khalid Baadi}
\address{Universit{\'e} Paris-Saclay, CNRS, Laboratoire de Math\'{e}matiques d'Orsay, 91405 Orsay, France}
\email{khalid.baadi@universite-paris-saclay.fr}
\author{Moritz Egert}
\address{TU Darmstadt, Fachbereich Mathematik, Schlossgartenstr.\ 7, 64289 Darmstadt, Germany}
\email{egert@mathematik.tu-darmstadt.de}
\author{Benjamin W. Kosmala}
\address{TU Darmstadt, Fachbereich Mathematik, Schlossgartenstr.\ 7, 64289 Darmstadt, Germany}
\email{kosmala@mathematik.tu-darmstadt.de}
\keywords{Parabolic Riesz transforms, second-order parabolic operators, off-diagonal estimates, half-order derivative, Blunck--Kunstmann extrapolation, limited space-time decay, iteration.}
\date{\today}
\subjclass[2010]{Primary: 42B20, 35K10, 26A33; Secondary: 42B37, 47A60.}
\title[$\L^p$ bounds for parabolic Riesz transforms]{$\L^p$ bounds for parabolic Riesz transforms with rough coefficients: The case $1<p \leq 2$}

\begin{document}
\begin{abstract}
We establish the first results on $\L^p$ bounds for Riesz transforms associated with non-autonomous second-order parabolic differential operators in divergence form with bounded coefficients that depend measurably on all variables. In the case of complex coefficients, we identify the maximal open range of exponents $1<p \leq2$ through the availability of uniform $\L^p$ resolvent bounds. This open range always contains the lower parabolic Sobolev conjugate of $2$ and the result is sharp in spatial dimension $n \geq 2$. For real coefficients, we prove extrapolation to the full range. Our argument relies on novel space-time off-diagonal bounds based on two complementary geometries: parabolic cubes on small scales and regions modeled on the half-order time derivative of a parabolic Bessel potential on large scales.
\end{abstract}

\maketitle

\setcounter{tocdepth}{1} 
\tableofcontents

\section{Introduction}

Riesz transforms are among the central objects of harmonic analysis. As prototypical examples of both Fourier multipliers and singular integral operators, they have played a pivotal role in the development of Calder\'on--Zygmund theory and continue to serve as a benchmark for many of its fundamental techniques.

Given a second-order operator $\mathcal{L}$ equipped with a suitable functional calculus and an underlying first-order differential structure $\D$, the associated Riesz transform is defined by
\begin{equation*}
     \mathcal{R}_{\mathcal{L}} \coloneqq \D\mathcal{L}^{-1/2}.
\end{equation*}
This abstract viewpoint encompasses a broad class of operators, including generalized Laplacians on Lie groups, graphs, Riemannian manifolds, and subsets of Euclidean space with boundary conditions, but also Schr\"odinger-type operators.

In this paper, we study the case in which $\mathcal{L} = \H$ is a heat-type operator:
\begin{align*}
    \H u =\partial_t u-\Div_x(A(x,t)\nabla_x u) \quad \text{on } \R^{n+1}
\end{align*}
is the sum of a time derivative and a non-autonomous elliptic part in divergence form. The parabolic first-order derivative $\D = (\nabla_x, \smash{D_t^{1/2}})$ is of fractional order $\nicefrac{1}{2}$ in time, turning $\RH$ into the composition of two non-local operators. The coefficients $A$ are bounded, measurable and elliptic (see Section~\ref{subsec:parabolicOP}), and $\H$ can be realized as an $m$-accretive operator on $\L^2(\R^{n+1})$ by Kaplan's trick~\cite{kaplan1966abstract}. As our main result, we establish the optimal open range of exponents $1 < p \leq2$ such that $\RH$ extends to a bounded operator on $\L^p$ or, equivalently, that the \emph{a priori} estimate
\begin{align*}
    c\bigl(\|\nabla_x u\|_p + \|D_t^{1/2} u\|_p \bigr) \leq \|\H^{1/2} u\|_p
\end{align*}
holds with $c>0$ independent of $u$ whenever $u$ belongs to the energy space for $\H$.
\subsection{History}

The theory originates in the Euclidean setting, where Riesz transforms are associated with the Laplacian on $\R^n$ and bounded on $\L^2$ by spectral theory. Calder\'on and Zygmund~\cite{CZ} were the first to prove boundedness on $\L^p$ for all $p\in(1,2]$, while duality extends the result to all $p\in(1,\infty)$. Boundedness on $\L^p$, in particular in the range $1<p\leq2$ and beyond, has since been pursued in a wide variety of settings~\cite{Str83,CD99,DM99,S99,Rus00,Rus01,ACDH04,AC05,AMR07,SV08,BF16,CCFR17,BEH20,LMS+23,Fen26, Egert_Riesz, Bechtel_Riesz}.

For divergence-form elliptic operators with rough coefficients, the lack of smoothness gives rise to fundamentally new phenomena. For such operators, even the $\L^2$ theory is a consequence of the solution of the Kato problem \cite{AHLMT02}. $\L^p$ bounds are typically restricted to a subrange of exponents $1<p<2$ and duality is no longer applicable to treat the case $2<p<\infty$. This phenomenon of limited-range extrapolation beyond Calderón--Zygmund theory was first explored in the seminal works of Blunck--Kunstmann and Hofmann--Martell \cite{blunck2003calderon,HM03}; see also the monographs \cite{auscher2007necessary,AEbook2023} for comprehensive historical accounts. The resulting theory has since been extended to degenerate elliptic operators \cite{CR15,ARR15,cruz2017kato}, Schr\"odinger operators~\cite{Morris-Dumont} and generalized Stokes operators~\cite{HT26}.

For special cases of parabolic operators, assuming enough temporal regularity on the coefficients of the elliptic part, Nystr\"om~\cite{Nystr16} and Ouhabaz~\cite{Ouhabaz21} proved by different techniques that the parabolic Riesz transform is bounded on $\L^2$. The full parabolic Kato problem for merely measurable coefficients was solved in \cite{AEN2020}. Beyond the Hilbert space setting, the only available result on parabolic Riesz transforms is the $\L^p$ boundedness obtained in \cite{Ouhabaz21} for autonomous operators with real coefficients, based on interpolation arguments and maximal regularity for the associated abstract Cauchy problem. Here, we establish $\L^p$ boundedness of parabolic Riesz transforms in the non-autonomous case with merely measurable coefficients depending on all variables.

\subsection{Main ideas and contributions}

For the classical heat operator $\partial_t-\Delta_x$, the Riesz transform
\begin{align*}
    \RH = \D (\partial_t - \Delta_x)^{-1/2}
\end{align*}
is the Fourier multiplier with symbol
\begin{align*}
    \biggl(\frac{\ii \xi}{\sqrt{\ii \tau + |\xi|^2}},\frac{|\tau|^{1/2}}{\sqrt{\ii \tau + |\xi|^2}}\biggr),
\end{align*}
whose second component is singular along the hyperplane $\{\tau=0\}$. Its $\L^p$ boundedness therefore relies on techniques that separate the time and space variables, such as the Marcinkiewicz multiplier theorem. For generalized heat operators, however, the measurable coefficients couple space and time, rendering such arguments unavailable. Nevertheless, we show that enough separation of the time and space variables can still be recovered to replace the classical coordinate-wise arguments. The remainder of this section gives an informal overview of the ideas underlying this approach.

We start from the Calder\'on reproducing formula
\begin{align}
\label{eq:intro_kernel}
    \RH u = \frac{2}{\pi} \int_0^\infty \lambda \D \E_\lambda u \; \frac{\mathrm{d} \lambda}{\lambda},
\end{align}
which represents $\RH=\D\H^{-1/2}$ as an integral operator with operator-valued kernel in the additional parameter $\lambda>0$. Here,
$\E_\lambda := (1+\lambda^2 \H)^{-1}$. Following Auscher~\cite{auscher2007necessary}, $\RH$ should be bounded on $\L^p$ precisely when $(\lambda \D \E_\lambda)_{\lambda>0}$ is uniformly bounded on $\L^p$. To understand the mechanism in more detail, we first return to the model case $\H=\partial_t-\Delta_x$. 

For this special operator, the temporal component of $\lambda\D\E_\lambda$ has an explicit convolution kernel $K_\lambda(x,t)$, namely the half-order time derivative of a parabolic Bessel kernel, satisfying
\begin{align}\label{eq: Kernel estimates Laplacian}
    |K_\lambda(x,t)| \leq \lambda^{-(n+3)}\bigg(\frac{\lambda^2}{\abs{t}}\bigg)^{3/2} \e^{-c \frac{\abs{x}}{\lambda}}
\end{align}
for $t<0$ and $|x|\ge \lambda$. The decay in the time variable is insufficient for all extrapolation techniques in parabolic scaling that we are aware of, since $\nicefrac{3}{2}$ is below the homogeneous dimension $n+2$. The driving idea is thus to combine two different geometries: parabolic cubes, reflecting the local geometry of $\H$, on small scales, and regions modeled on the level sets of the half-order time derivative of the parabolic Bessel kernel at large scales. See Figure~\ref{fig: time and space supports intro} for an illustration.

\begin{figure}[ht]
    \definecolor{gold}{HTML}{F1A226}
    \colorlet{color1}{violet}
    \colorlet{color2}{gold}
    \begin{tikzpicture}[scale=0.4]
        \pgfdeclarelayer{layer1}
        \pgfdeclarelayer{layer2}
        \pgfdeclarelayer{layer3}
        \pgfdeclarelayer{layer4}
        \pgfdeclarelayer{layer5}
        \pgfsetlayers{layer1,layer2,layer3,layer4,layer5}
        
        \tikzmath{\Ballx=2; \Ballt=2; \InnerAnnulusx=3; \InnerAnnulust=8; \OuterAnnulusx=5; \OuterAnnulust=10;}
        
        \begin{pgfonlayer}{layer1}
            \filldraw[very thick, pattern={Lines[angle=45,distance=5pt]}, pattern color=color1]
            (-\OuterAnnulusx,-\OuterAnnulust)
            rectangle (\OuterAnnulusx,\OuterAnnulust);
        \end{pgfonlayer}
        
        \begin{pgfonlayer}{layer2}
            \fill[fill=white]
            ({-\OuterAnnulusx+0.02},-\InnerAnnulust)
            rectangle (\OuterAnnulusx-0.02,\InnerAnnulust);
        \end{pgfonlayer}
        
        \begin{pgfonlayer}{layer3}
            \fill[pattern={Lines[angle=-45,distance=5pt]}, pattern color=color2]
            (-\OuterAnnulusx,-\InnerAnnulust)
            rectangle (\OuterAnnulusx,\InnerAnnulust);
        \end{pgfonlayer}
        
        \begin{pgfonlayer}{layer4}
            \filldraw[very thick, fill=white]
            (-\InnerAnnulusx,-\InnerAnnulust)
            rectangle (\InnerAnnulusx,\InnerAnnulust);
        \end{pgfonlayer}
        
        \begin{pgfonlayer}{layer5}
            \filldraw[very thick, fill=lightgray]
            (-\Ballx,-\Ballt)
            rectangle (\Ballx,\Ballt);

            \draw[black, very thick, ->] ({-\OuterAnnulusx-0.5},0) -- ({\OuterAnnulusx+0.5},0) node[above] {$x$};
            \draw[black, very thick, ->] (0,{-\OuterAnnulust-0.5}) -- (0,{\OuterAnnulust+0.5}) node[left] {$t$};
            
        \end{pgfonlayer}
    \end{tikzpicture}
    \caption{Change of geometry on large scales: The colored annulus is separated from the centered parabolic cube (\textcolor{gray}{gray}) of radius $r$ by distances of $N^{2j} r^2$ in time (\textcolor{color1}{violet}) and $2^jr$ in space (\textcolor{color2}{golden}). Typically, $N\geq 2^n$, so that the right-hand side in \eqref{eq: Kernel estimates Laplacian} is controlled by $2^{-j\varepsilon}$ with $\varepsilon>1$ on the full annulus.}
\label{fig: time and space supports intro}
\end{figure}

Relative to this two-scale geometry, we recover sufficient off-diagonal decay of Gaffney type. In this way, the change of geometry allows us to use the exponential decay in the spatial variable in order to gain decay in the time variable. This geometric insight already appears implicitly in \cite{AEN2020} and is developed here into a systematic extrapolation framework.

\subsection{Main results and strategy}

Following \cite{auscher2007necessary,AEbook2023}, we introduce the limiting exponents
\begin{align*}
    & p_-(\mathcal{H})\coloneqq\inf\left\{ p\ge1: \  \left ( \mathcal{E}_\lambda \right )_{\lambda>0} \ \text{is (uniformly)} \ \L^p \ \text{bounded} \right\},\\
    & q_-(\mathcal{H})\coloneqq\inf\left\{ p\ge1: \ \left (\lambda \mathbb{D} \mathcal{E}_\lambda \right )_{\lambda>0} \ \text{is (uniformly)} \ \L^p \ \text{bounded} \right\}.
\end{align*}
The preceding discussion suggests that $q_-(\H)$ governs the range of exponents for which $\RH$ is bounded on $\L^p$.

Even for real-valued coefficients, however, the value of $q_-(\H)$ is far from obvious, whereas for the more accessible exponent $p_-(\H)$ related to $\L^p$ resolvent bounds we have $p_-(\H)=1$ by the Gaussian heat kernel estimates. This makes the identification of both exponents a central question that is also solved in the following main theorem of our paper.

\begin{thm}\label{thm: Théorème principal}
The following assertions hold.
\begin{enumerate}[(1)]
    \item \textbf{Range of exponents:} We have $p_-(\H)\in [1,2_\star)$, where $2_\star := \frac{2d}{d+2}\in (1,2)$ and $d=n+2$.
    
    \item \textbf{Sufficient condition for boundedness:} For every $p\in (p_-(\H),2]$, the parabolic Riesz transform
    \begin{equation*}
        \mathcal{R}_{\H} = (\nabla_x \H^{-1/2}, D_t^{1/2}\H^{-1/2})
    \end{equation*}
    extends to a bounded operator on $\L^p(\R^{n+1})$.
    
    \item \textbf{Necessary condition for boundedness:} If $p\in (1,2)$ is such that the parabolic Riesz transform $\mathcal{R}_{\H}$ extends to a bounded operator on $\L^p(\R^{n+1})$, then $p\geq p_-(\H)$.
    
    \item \textbf{Case of real coefficients:} If $A$ has real-valued coefficients, then $p_-(\H)=1$, and the spatial gradient component $\nabla_x \H^{-1/2}$ is of weak type $(1,1)$.
    
    \item \textbf{Sharpness within the class of parabolic operators:} Assume that $n\ge 2$. For every $\varepsilon>0$ there exist coefficients $A_\varepsilon$ such that for $\H_\varepsilon=\partial_t-\mathrm{div}_x(A_\varepsilon \nabla_x)$ we have
\begin{equation*}
    2_\star-\varepsilon \leq p_-(\H_\varepsilon).
\end{equation*}
\end{enumerate}
\end{thm}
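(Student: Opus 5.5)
The five assertions call for different tools, so I treat them in order, with the off-diagonal machinery sketched in the introduction used for (2) and (4).

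For (1), I would combine the $\L^2$ theory with a parabolic Sobolev embedding. By Kaplan's trick and the solution of the parabolic Kato problem \cite{AEN2020}, the family $(\lambda\D\E_\lambda)_{\lambda>0}$ is uniformly bounded on $\L^2$. Moreover, $\E_\lambda=(1+\lambda^2\H)^{-1}$ maps $\L^2$ into the energy space with $\lambda\|\D\E_\lambda f\|_2\lesssim\|f\|_2$. The parabolic Sobolev inequality $\|u\|_{2^\star}\lesssim\|\D u\|_2$, with $2^\star=2d/(d-2)$, then gives $\L^2\to\L^{2^\star}$ bounds for $\E_\lambda$ with the correct scaling. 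Combining these with $\L^2$ off-diagonal (Gaffney-type) estimates on parabolic cubes yields uniform $\L^{2^\star}$ bounds, and by duality (applied to the adjoint $\H^*$) uniform $\L^{2_\star}$ bounds. An interpolation/self-improvement step, in the spirit of Sneiberg's openness theorem applied to $1+\lambda^2\H$ on a complex interpolation scale of energy-type spaces, then pushes the exponent strictly below $2_\star$, giving $p_-(\H)<2_\star$.

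For (2), I would run a Blunck--Kunstmann extrapolation for $\RH$, starting from the weak-type criterion at $p_0\in(p_-(\H),2)$ and interpolating with $\L^2$. For a Calder\'on--Zygmund piece $b_Q$ supported on a parabolic cube $Q$ of radius $r$, I approximate the identity by $I-(I-\E_r)^m$ and split $\RH b_Q = \RH(1-(1-\E_r)^m)b_Q + \RH(1-\E_r)^m b_Q$.

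The first term is handled through the $\L^{p_0}\to\L^2$ off-diagonal bounds of $\E_r$, which come from interpolating the $\L^{p_0}$ bounds with $\L^2$ Gaffney estimates. The second term is handled through the representation \eqref{eq:intro_kernel}, which reduces it to off-diagonal bounds for $\lambda\D\E_\lambda(1-\E_r)^m$ on annuli.

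The main obstacle is exactly this decay. Parabolic annuli give time decay only like $(\lambda^2/|t|)^{3/2}$, which is insufficient. I would therefore use the modified annuli of Figure~\ref{fig: time and space supports intro}, whose time extent is $N^{2j}r^2$ and spatial extent is $2^jr$. The plan is to prove Gaffney estimates relative to these regions: the spatial part decays exponentially via commutator arguments with spatial cutoffs, and the temporal part decays polynomially with power $3/2$, obtained through the half-order time derivative applied to a resolvent in time. Choosing $N$ large relative to $2^n$ makes the sum over $j$ converge against the measure $|C_j|\approx 2^{jn}N^{2j}r^{n+2}$.

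A second technical issue is that the weak-type covering argument must use these non-parabolic regions. I would handle this by proving the weak-type bound directly, estimating the bad part outside the union of the enlarged regions $\bigcup 2^jQ\times N^{2j}$ only through the summable decay, never through a doubling property of the regions.

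For (3), I follow Auscher's necessity argument. If $\RH$ is bounded on $\L^p$, then $\lambda\D\E_\lambda=\RH\,\lambda\H^{1/2}\E_\lambda$, and $\lambda\H^{1/2}\E_\lambda$ is bounded on $\L^q$ for $q$ near the relevant range. Then $\E_\lambda$ is bounded on $\L^p$ via Sobolev-type reasoning: I would write $\E_\lambda$ through $\lambda\D\E_\lambda$ and a parabolic Sobolev/Hardy inequality, iterating finitely many steps from $\L^2$ downwards. This gives $p_-(\H)\le p$.

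For (4), Gaussian bounds for the fundamental solution (Aronson) give uniformly integrable kernels for $\E_\lambda$, hence $p_-(\H)=1$. For the weak type $(1,1)$ of $\nabla_x\H^{-1/2}$, the same decomposition from (2) applies with $p_0=1$, where the Gaussian decay makes the geometry issue disappear for the spatial gradient.

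For (5), I would take an elliptic counterexample independent of $t$: choose $A_\varepsilon(x)$ from the Frehse/Mazya--Nazarov--Plamenevskii-type constructions for which the elliptic resolvent fails to be bounded on $\L^p$ for $p<\frac{2n}{n+2}-\varepsilon$, when $n\ge 3$ (with an adapted construction for $n=2$). Tensorizing in time then yields parabolic operators whose resolvents are unbounded below the corresponding parabolic threshold. This step still needs checking: I would need to compare $2_\star=\frac{2(n+2)}{n+4}$ with the elliptic exponent and possibly use $t$-dependent coefficients to match exactly.
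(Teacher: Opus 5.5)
Your treatment of (1) and (4) essentially matches the paper. For (1), Sobolev plus the $\L^2$ gradient bound gives $\L^2\to\L^{2^\star}$ bounds; interpolation with Gaffney estimates then yields $\L^r$ bounds only for $r<2^\star$, not at $2^\star$, but this is harmless. Duality gives $p_-(\H)\le 2_\star$. Strictness comes from a Shneiberg-type $\L^p$ bound, $p>2$, for $(\lambda\D\E^\star_\lambda)_{\lambda>0}$ \cite[Lemma~2.10]{AEN2020}, fed into the same Sobolev argument. For (4), Gaussian bounds plus a single step with $p=1$, $q=2$ for $\nabla_x\H^{-1/2}$ is exactly what the paper does. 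Parts (2), (3) and (5), however, have genuine gaps.

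\emph{Part (2).} One Blunck--Kunstmann step with strong type $(2,2)$ cannot reach any $p_0\le 2_\star$, and by (1) the interval $(p_-(\H),2_\star]$ is always non-empty.

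The reason is as follows. The temporal decay of $\lambda D_t^{1/2}\E_\lambda$ from $\Delta_r$ to the stretched annulus carries only a factor $\lambda/r$ when $\lambda\ll r$. After commuting a time cutoff through the resolvent, the non-local term $\lambda\E_\lambda^\star\eta D_t^{1/2}u$ is merely $O(\lambda)$; the paper notes that even the $\L^2$ estimate has this form. Meanwhile, passing from $\L^{p_0}$ to $\L^2$ through $\E_\lambda^\beta\psi(\lambda^2\H)$ costs $\lambda^{-\gamma_{p_0,2}}$, and $\varphi(r^2\H)$ gives no gain for $\lambda<r$. So the $\lambda$-integral behaves like $\int_0^r\frac{\lambda}{r}\lambda^{-\gamma_{p_0,2}}\frac{\d\lambda}{\lambda}$, which is finite only if $p_0>2_\star$.

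The missing idea is iteration. If $\RH$ is bounded on $\L^q$, a step with strong type $(q,q)$ yields $\L^p$ bounds for $p\in(p_-(\H),q)\cap(q_\star,q)$. This needs the $\L^q$ version of the space-time off-diagonal estimates, whose decay $N^{-j(1+\frac{1}{1+q'})}$ beats $(2^nN^2)^{j/q'}$ for every $q\le2$ once $N$ is large. These estimates in turn need $\L^\varrho$ bounds for $(\lambda\D\E_\lambda)_{\lambda>0}$ with $\varrho<q$, i.e.\ $q_-(\H)=p_-(\H)$. That identity is a separate result (Section~\ref{S6}), proved with a second criterion based on $\M_t\M_x$ (Proposition~\ref{prop: MtMx}) and another bootstrap. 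None of this is in your plan.

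Two smaller points in (2):
\begin{itemize}
\item $\L^{p_0}\to\L^2$ off-diagonal bounds hold for a power $\E_r^\beta$, obtained by triangle interpolation with the $\L^{2_\star}\to\L^2$ bound. Interpolating $\L^{p_0}$ bounds with Gaffney estimates does not give them for $\E_r$ itself.
\item Summable decay alone does not control the sum over Calder\'on--Zygmund cubes; it only yields $\sum_\Delta\abs{\Delta}^{1/q}$. The paper handles this by duality and Kolmogorov's inequality for maximal functions $\M_j$. These are adapted to metrics $\d^{(j)}$ whose balls are $2^{j+1}Q\times N^{j+1}I$, with doubling constant independent of $j$.
\end{itemize}

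\emph{Part (3).} As written, this is circular. Bounding $\lambda\D\E_\lambda=\RH\,\lambda\H^{1/2}\E_\lambda$ on $\L^q$ already requires $q$ to lie in the resolvent range. Sobolev applied to $\E_\lambda u$ then gives only $\L^q\to\L^{q^\star}$ bounds, i.e.\ resolvent bounds on $(q,q^\star]$, which is nothing below what is already known. You must reverse the order and apply Sobolev to $\H^{-1/2}u$:
\[
\norm{\E_\lambda u}_{q^\star}=\lambda^{-1}\norm{(\lambda\H^{1/2}\E_\lambda)\H^{-1/2}u}_{q^\star}\lesssim\lambda^{-1}\norm{\D\H^{-1/2}u}_{q}\lesssim\lambda^{-1}\norm{u}_q .
\]
Here $\lambda\H^{1/2}\E_\lambda$ is used on $\L^{q^\star}$, where $q^\star>p_-(\H)$ is known by backward induction along $p_k=p_{k-1}^\star$. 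The Riesz transform $\RH$ is used on $\L^q$, and Lemma~\ref{lem: many functions} guarantees $u\in\operatorname{ran}(\sqrt{\H})$.

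\emph{Part (5).} Time-independent coefficients cannot work at all, so the tensorization does not merely need checking; it fails. For $A=A(x)$ and $L=-\Div_x(A\nabla_x)$ one has $(1+\lambda^2\H)^{-1}f=\int_0^\infty\e^{-s}\e^{-s\lambda^2L}f(\cdot,\cdot-s\lambda^2)\,\d s$. The semigroup $(\e^{-sL})_{s>0}$ is uniformly bounded on $\L^p(\R^n)$ for $p\in(\tfrac{2n}{n+2},2]$. Hence $p_-(\H)\le\frac{2n}{n+2}=2_\star-\frac{8}{(n+2)(n+4)}$, and no tensorized elliptic example gives $2_\star-\eps\le p_-(\H_\eps)$ for small $\eps$; for $n=2$ one even has $p_-(\H)=1$.

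Genuinely time-dependent, parabolically self-similar solutions are needed. The paper uses Mooney's $u(x,t)=(-t)^{-\mu/2}\e^{-\frac{\ii}{2}\log(-t)}W((-t)^{-1/2}x)$ with $\mu$ close to $\frac{n}{2}$, so that $u\notin\L^q$ near the origin for a given $q>2^\star$. It then localizes to get $w\in\EE$ with $(1+\H)w\in\L^q\cap\L^2$ but $w\notin\L^q$, and concludes by duality.
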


Theorem~\ref{thm: Théorème principal} is an extrapolation result from the case $p=2$, corresponding to the parabolic Kato square root estimate \cite{AEN2020,AEN2025}. We conclude the introduction with an outline of the main steps of the proof and the overall structure of our paper.

\textbf{Space-time off-diagonal estimates:}
In Section~\ref{S4}, we establish sufficiently strong space-time off-diagonal estimates for $(\lambda \D \E_\lambda)_{\lambda>0}$, following the geometric idea outlined above. In the setting of Figure~\ref{fig: time and space supports intro}, denoting by $E$ the gray cube and by $F$ the colored annulus, we prove in Theorem~\ref{thm: parabolicODEs} the estimate
\begin{align*}
    \|\one_F \lambda \D\mathcal{E}_\lambda \one_E u \|_{p}
        \leq C\biggl(\frac{\lambda}{r} +\Bigl(\frac{\lambda}{r}\Bigr)^{4N}\biggr)
        N^{-j \varepsilon}
        \|\one_E u\|_{p},
\end{align*}
where $\varepsilon = 1+\nicefrac{1}{(1+p')}>1$, whenever $\H$ satisfies $\L^p$ resolvent estimates. In contrast to previous $\L^p$ off-diagonal estimates, see for instance \cite{auscher2007necessary,blunck2003calderon,HM03,AEbook2023}, ours is genuinely an $\L^p$ estimate and is not obtained by interpolation with an $\L^2$ energy estimate. Indeed, because of the non-local fractional derivative $D_t^{1/2}$, even the underlying $\L^2$ energy estimate is of comparable form and interpolation would fail to provide sufficient decay whenever $p<2_\star$.

\textbf{Extrapolation for operators with limited space-time decay:}
In Theorem~\ref{thm: BK} we prove a general weak type criterion for sublinear operators acting on functions of two variables tailored to our time-stretched annuli and the corresponding off-diagonal estimates above. This new $\L^p$ extrapolation framework, developed in Section~\ref{S5}, falls outside the scope of the Calder\'on--Zygmund and Blunck--Kunstmann theory on spaces of homogeneous type~\cite{CZ,Stein1993_HA,blunck2003calderon}, since no fixed metric is compatible with both the parabolic cubes and the time-stretched annuli.

\textbf{Critical exponents:}
In Section~\ref{S6} we prove that $p_-(\H)=q_-(\H)\in[1,2_\star)$ by a bootstrap argument along the parabolic Sobolev conjugates that successively compensates for the limited decay in the time variable in each step. This establishes part~(1) of Theorem~\ref{thm: Théorème principal}.

\textbf{Riesz transform bounds:}
In Section~\ref{S7} we combine the extrapolation framework of Section~\ref{S5} with the space-time off-diagonal estimates to establish boundedness of the parabolic Riesz transform in the optimal exponent range $(p_-(\H),2]$. Here, a single extrapolation step is not sufficient: the available kernel decay on $\L^2$ would only yield boundedness for $p>2_\star$, which in particular falls short in the case of real-valued coefficients. The key observation is that our space-time off-diagonal estimates are $p$-sensitive in the sense that we can iteratively restart the extrapolation argument from every exponent $q$ for which the Riesz transform is already known to be bounded, while the extrapolation interval relative to $q$ remains independent of $q$.

\textbf{The remaining parts of Theorem~\ref{thm: Théorème principal}:} In Section~\ref{S8} we treat the case of real coefficients, and more generally the case where Gaussian upper bounds for the heat kernel are available. The sharpness result, part~(5), is established in Section~\ref{S9} and relies on Mooney's irregular solution~\cite{Mooney2021}.

\textbf{Outlook and open problems:}
The techniques developed here are expected to be useful for developing limited-range Calder\'on--Zygmund theory for other non-local operators such as generalized Stokes operators~\cite{HT26}. We discuss several related open problems in Section~\ref{S10}.

\subsection{Notation}

Most of our notation is standard. Additionally, we shall make use of the following conventions.
\begin{enumerate}[label=$\blacklozenge$]
    \item For suitable exponents $p \in [1, \infty]$ we define conjugate indices by $\nicefrac{1}{p'} = 1-\nicefrac{1}{p}$ (H\"older), $\nicefrac{1}{p^\star}=\nicefrac{1}{p}-\nicefrac{1}{n+2}$ (upper Sobolev), $\nicefrac{1}{p_\star}=\nicefrac{1}{p}+\nicefrac{1}{n+2}$ (lower Sobolev).
    
    \item For $p,q\in[1,\infty]$ we define the interpolating index by $\nicefrac{1}{[p,q]_\theta}\coloneqq \nicefrac{(1-\theta)}{p} + \nicefrac{\theta}{q}$ for any $\theta \in [0,1]$ and the Sobolev gap $\gamma_{p,q}\coloneqq  |\nicefrac{(n+2)}{q} - \nicefrac{(n+2)}{p}|$.

    \item For $(x,t),(y,s)\in \R^{n+1}$, we denote their parabolic distance by $$\d((x,t),(y,s))=\max\big(|x-y|_\infty,\sqrt{|t-s|}\big).$$
    
    \item For $(x,t)\in\R^{n+1}$ and $r>0$, let $Q_r(x)$ denote the cube centered at $x$ with radius $r$ and sides parallel to the coordinate axes, and set $I_r(t)\coloneqq (t-r^2,t+r^2)$. The parabolic cube centered at $(x,t)$ with radius $r$ is $\Delta_r(x,t)\coloneqq Q_r(x)\times I_r(t)$. Depending on the context, we may omit the centers.
    \item For a parabolic cube $\Delta_r:=Q_r\times I_r \subset \mathbb{R}^{n+1}$ and $a,b>0$, we define the stretched cube $aQ_r\times bI_r:=Q_{ar}\times I_{br}$. For $N>1$, we set $C^N_1(\Delta_r):=4Q_r \times N^2 I_r$ and for all $j\ge 2$
    \begin{equation*}
    C^N_j(\Delta_r):= \left ( 2^{j+1}Q_r \times N^{j+1} I_r \right ) \setminus \left ( 2^{j}Q_r \times N^{j} I_r \right ).
    \end{equation*}
 
    \item Constants $C, c$ appearing in statements are always strictly positive and finite.
\end{enumerate}

\subsubsection*{\textbf{Acknowledgements}}

The first author acknowledges the support of a public grant from the Fondation Mathématique Jacques Hadamard through the Programme Visibilité Scientifique Junior FMJH, as well as the guidance of his PhD advisor, Professor Pascal Auscher, that made a first research stay in May and June 2025 possible. He also acknowledges his support for a second research stay of one week in December 2025. The first author would also like to express his warm thanks to the second and third authors, as well as to the Department of Mathematics at TU Darmstadt, where the ideas of this project took shape, for their very kind hospitality. The second author is grateful to Hendrik Vogt for sharing his thoughts on Blunck and Kunstmann's criterion, which led to the formulation of Theorem~\ref{thm: BK}. The first and the second authors, but not the third, express warm thanks to Giulio Mollo for the espresso machine in Moritz's bibliothèque.

\section{The parabolic operator \texorpdfstring{$\mathcal{H}$}{}: definition and the \texorpdfstring{ $\L^2$}{} theory}

This section gives a brief summary of the known $\L^2$ theory for parabolic operators in divergence form.

\subsection{Parabolic energy space}
We denote by $\F$ the Fourier transform with respect to the time variable $t$ and write $\tau$ for the corresponding frequency variable. Recall that if $u \in \L^2(\R^{n+1})$, then $u(x,\cdot) \in \L^2(\R)$ for almost every $x \in \R^n$ by Fubini's theorem. The expression
\begin{equation*}
    H_t u \coloneqq \F^{-1}\left(i\,\tfrac{\tau}{|\tau|}\,\F u\right)
\end{equation*}
defines the Hilbert transform. If $|\tau|^{1/2}\F u \in \L^2(\R^{n+1})$, the half-order time derivative is defined by
\begin{equation*}
    D_t^{1/2} u \coloneqq \F^{-1}\left(|\tau|^{1/2}\F u\right).
\end{equation*}
The following material is all taken from \cite{AEN2025} with the constant weight $\omega=1$. We define the parabolic energy space as 
\begin{equation*}
    \mathrm{E}\coloneqq \left\{u\in \L^2(\R^{n+1}) \, : \, \nabla_xu, \, D_t^{1/2}u \in \L^2(\R^{n+1})  \right\},
\end{equation*}
where $\nabla_x$ denotes the (distributional) gradient with respect to the spatial variables $x$. For $u \in \mathrm{E}$, the parabolic gradient is defined by $\D u \coloneqq (\nabla_x u, D_t^{1/2} u)$. We equip $\mathrm{E}$ with the norm $\|u\|_\mathrm{E} \coloneqq (\|u\|_2^2 + \|\nabla_x u\|_2^2 + \|D_t^{1/2} u\|_2^2)^{1/2}$, which makes $\mathrm{E}$ a Hilbert space containing $\Cont_0^\infty(\R^{n+1})$ as a dense subspace. Moreover, multiplication by functions in $\Cont_\mathrm{b}^1(\R^{n+1})$ defines bounded operators on $\mathrm{E}$. In particular, $(\mathrm{E}, \L^2(\R^{n+1}), \mathrm{E}^\star)$ is a Gelfand triple, where $\mathrm{E}^\star$ is the anti-dual of $\mathrm{E}$ and we have bounded operators
\begin{align*}
    \nabla_x  : \, \mathrm{E} \longrightarrow (\L^2(\R^{n+1}))^n, \quad D_t^{1/2}  : \, \mathrm{E} \longrightarrow \L^2(\R^{n+1}), \quad
    \D  : \, \mathrm{E} \longrightarrow (\L^2(\R^{n+1}))^{n+1},
\end{align*}
and their adjoints
\begin{align*}
    -\Div_x : \, \L^2(\R^{n+1})^n \longrightarrow \mathrm{E}^\star, \quad
    D_t^{1/2} : \, \L^2(\R^{n+1}) \longrightarrow \mathrm{E}^\star.
\end{align*}
From \cite[§12.1]{Kilbas} we recall a representation formula for $D_t^{1/2}$: For $u \in \mathcal{S}(\mathbb{R}^{n+1})$ we have
\begin{align}
\label{eq: Dt representation}
    D_t^{1/2} u(x,t) = \frac{1}{2\sqrt{2\pi}} \int_{\mathbb{R}} \frac{u(x,t) - u(x,s)}{|t-s|^{3/2}} \, \mathrm{d}s.
\end{align}

\subsection{The parabolic operator \texorpdfstring{$\mathcal{H}$}{} and its associated parabolic Riesz transform}
\label{subsec:parabolicOP}

We fix a matrix-valued function $A: \R^{n+1} \to M_n(\C)$ with  measurable complex-valued coefficients, satisfying
\begin{equation}\label{eq: ellipticité A}
\left| A(x,t) \xi \cdot \zeta \right| \leq M |\xi|\,|\zeta|, \quad 
\nu |\xi|^2 \leq \mathrm{Re}(A(x,t) \xi \cdot \overline{\xi})
\end{equation}
for some $M, \nu > 0$ and for all $\xi, \zeta \in \C^n$ and $(x,t) \in \R^n \times \R$. We define the parabolic operator $\H: \mathrm{E} \rightarrow \mathrm{E}^\star$ by setting
\begin{align*}
    \H(u)(v) \coloneqq \iint_{\R^{n+1}} H_t D_t^{1/2} u(x,t) \cdot \overline{D_t^{1/2} v(x,t)} + A(x,t) \nabla_x u(x,t) \cdot \overline{\nabla_x v(x,t)} \, \, \mathrm{d}(x,t).
\end{align*}
We formally write $\mathcal{H} = \partial_t - \Div_x(A \nabla_x)$.
Likewise, we define $\mathcal{H}^\star : \mathrm{E} \to \mathrm{E}^\star$ as the adjoint of $\mathcal{H}$ associated with the adjoint form defining $\mathcal{H}$. Formally, it is given by $\mathcal{H}^\star \coloneqq -\partial_t - \Div_x\!\left(A^\star \nabla_x\right)$, where $A^\star$ denotes the Hermitian adjoint of the matrix $A$. This operator is not in the same class as $\H$ but it is similar to such an operator via conjugation with the reversal in time $(\tau u)(x,t) \coloneqq u(x,-t)$. Hence, all operator bounds for $\H$ in Lebesgue spaces also hold for $\H^\star$ and vice versa. We will use this fact freely throughout the paper.

The following theorem summarizes the  $\L^2$ theory for the square root of $\H$.

\begin{thm}[{\unskip\cite[Theorem 1.1]{AEN2025}, \cite[Theorem 2.6]{AEN2020}}]\label{thm: théorie L2}
The maximal restriction of $\H$ to  $\L^2(\R^{n+1})$ is an $m$-accretive and injective operator on  $\L^2(\R^{n+1})$, and the domain of its unique maximal accretive square root is $\mathrm{E}$. Moreover, there exists a constant $C$ such that
\begin{equation}
  \frac{1}{C} \|\D u\|_2 \le \|\sqrt{\H} u\|_2 \le C \|\D u\|_2, \qquad (u \in \mathrm{E}).
\end{equation}
\end{thm}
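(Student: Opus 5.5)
The statement is Theorem~\ref{thm: théorie L2}. It is quoted from \cite{AEN2025,AEN2020}, so the proof amounts to recalling how the $\L^2$ theory is assembled. I would proceed in three steps.

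\textbf{Step 1: realization of $\H$ on $\L^2$.} I would use Kaplan's trick. The Hilbert transform $H_t$ is skew-adjoint on $\L^2$, and it commutes with $D_t^{1/2}$. Hence the form
\begin{equation*}
    a(u,v) = \iint H_t D_t^{1/2}u \cdot \overline{D_t^{1/2}v} + A\nabla_x u\cdot\overline{\nabla_x v}
\end{equation*}
satisfies $\mathrm{Re}\, a(u,u) \geq \nu\|\nabla_x u\|_2^2$. This form is not coercive on $\mathrm{E}$. I would therefore perturb it by $\delta\,\mathrm{Re}\,a(u,(1+\delta H_t)v)$ for small $\delta>0$. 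The multiplier $1+\delta H_t$ is invertible and bounded on $\mathrm{E}$. The hidden coercivity comes from $\mathrm{Re}(H_tD_t^{1/2}u\cdot\overline{H_tD_t^{1/2}u}) = |D_t^{1/2}u|^2$. Lax--Milgram on $\mathrm{E}$ then shows that $1+\H:\mathrm{E}\to\mathrm{E}^\star$ is invertible. The part of $\H$ in $\L^2$ is accretive because $\mathrm{Re}\,a(u,u)\ge 0$, so it is $m$-accretive. For injectivity, $\H u=0$ gives $\nabla_x u=0$. Then $u$ is independent of $x$ and lies in $\L^2(\R^{n+1})$, so $u=0$.

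\textbf{Step 2: reduction of the square root estimate to a first-order system.} I would follow the Axelsson--Keith--McIntosh strategy and write
\begin{equation*}
    \sqrt{\H}\text{ via } T = \begin{pmatrix} 0 & \D^\ast \\ \D & 0\end{pmatrix}\begin{pmatrix}1&0\\0&\widetilde{A}\end{pmatrix},
\end{equation*}
where $\widetilde{A}=\mathrm{diag}(A, H_t)$ or a similar operator. Since $H_t$ is not a multiplication operator, this is not literally the AKM setting. Instead I would treat $T$ as a perturbed Dirac-type operator with the non-local but $x$-independent, bounded, "coercive-after-Kaplan" coefficient $H_t$. $T$ is bisectorial. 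The estimate $\|\sqrt{\H}u\|_2\simeq\|\D u\|_2$ then follows from the quadratic estimate
\begin{equation*}
    \int_0^\infty\|\lambda T(1+\lambda^2T^2)^{-1}f\|_2^2\,\tfrac{\d\lambda}{\lambda}\lesssim\|f\|_2^2.
\end{equation*}
This quadratic estimate yields a bounded $H^\infty$-calculus, and in particular bounded $\mathrm{sgn}(T)$. That gives both inequalities, and it gives the domain identification $\dom(\sqrt{\H})=\mathrm{E}$ by density of $\Cont_0^\infty$.

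\textbf{Step 3 (main obstacle): the quadratic estimate.} This is where essentially all the work lies. I would carry out a parabolic $T(b)$ argument with the following ingredients:
\begin{itemize}
    \item off-diagonal $\L^2$ estimates for the resolvents in parabolic cubes, which must handle the non-local $D_t^{1/2}$ through weak off-diagonal decay;
    \item reduction to a principal part via Littlewood--Paley with parabolic dyadic averages;
    \item a Carleson measure estimate proved with test functions adapted to parabolic cubes;
    \item sectorial decomposition of the coefficient.
\end{itemize}
The non-locality of $D_t^{1/2}$ forces one to work with the weaker decay $(1+\d/\lambda)^{-\alpha}$ instead of exponential decay. The time-nonlocal terms are then controlled by commutator estimates $[D_t^{1/2},\chi]$ for smooth cutoffs $\chi$. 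This Carleson-measure step is the one I expect to be hardest. It is precisely what is carried out in \cite{AEN2020}.
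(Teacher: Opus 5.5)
Your proposal is in line with the paper, which gives no proof of this statement and simply imports it from \cite{AEN2025,AEN2020}: you likewise delegate the substantive part (the quadratic estimate) to \cite{AEN2020}, and your Kaplan-trick argument for $m$-accretivity and injectivity is correct in substance. Two points of the sketch need adjusting: first, a $T(b)$ argument cannot use $\mathrm{diag}(A,H_t)$ as coefficient, since $H_t$ is non-local and $\mathrm{Re}\langle H_t f, f\rangle = 0$, but conjugating your $T$ by the unitary $\mathrm{diag}(1,1_n,H_t)$ moves $H_t$ next to $D_t^{1/2}$ in the first-order part and leaves the multiplication coefficient $\mathrm{diag}(1,A,1)$, which is the setting of \cite{AEN2020}; second, the limited temporal decay is compensated there not by isotropic bounds $(1+\d/\lambda)^{-\alpha}$, which are too weak in general, but by off-diagonal estimates on time-stretched annuli (see the remark closing Section~\ref{S4}).
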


This result shows that the operator $\mathbb{D}\mathcal{H}^{-1/2}$, defined on $\operatorname{ran}(\sqrt{\mathcal{H}})$, is of strong type $(2,2)$. The following lemma gives more precise information on $\operatorname{ran}(\sqrt{\mathcal{H}})$.

\begin{lem}\label{lem: many functions}
    We have $\L^{2_\star}(\R^{n+1}) \cap \L^2(\R^{n+1}) \subseteq\operatorname{ran}(\sqrt{\mathcal{H}})$.
\end{lem}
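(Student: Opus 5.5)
The plan is to show that every $f\in \L^{2_\star}\cap\L^2$ lies in the range of $\sqrt{\H}$ by writing $f = \sqrt{\H}\,(\sqrt{\H^\star})^{\ldots}$-free: concretely, produce $u\in\mathrm{E}$ with $\H u = \sqrt{\H} f$ in a weak sense, or more directly, show that $f$ belongs to the range of $\sqrt{\H}$ by a duality characterization. Since $\H$ is $m$-accretive and injective on $\L^2$, so is $\sqrt{\H}$, with dense range. The range of $\sqrt{\H}$ coincides with the domain of $(\sqrt{\H})^{-1}=(\H^{-1/2})$, and by the adjoint relation $(\sqrt{\H})^* = \sqrt{\H^\star}$ a function $f\in\L^2$ lies in $\operatorname{ran}(\sqrt{\H})$ as soon as the antilinear functional $g\mapsto (f, h)$ with $g=\sqrt{\H^\star}h$, $h\in\mathrm{E}$, is bounded in $\|g\|_2$; i.e.\ as soon as
\begin{equation*}
|(f,h)_{2}| \le C\|\sqrt{\H^\star}h\|_2 \qquad (h\in \mathrm{E}).
\end{equation*}
Indeed, then $g\mapsto (f,h)$ is well defined on $\operatorname{ran}\sqrt{\H^\star}$ (injectivity), extends by Riesz representation to some $u\in \L^2$ with $(u,\sqrt{\H^\star}h)=(f,h)$ for all $h\in \dom(\sqrt{\H^\star})=\mathrm{E}$, whence $u\in\dom((\sqrt{\H^\star})^*)=\dom(\sqrt{\H})$ and $\sqrt{\H}u=f$.

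To verify the estimate, I apply Theorem~\ref{thm: théorie L2} to $\H^\star$ (legitimate by the time-reversal conjugation remark): $\|\D h\|_2\le C\|\sqrt{\H^\star}h\|_2$. It then suffices to prove the parabolic Sobolev inequality $\|h\|_{2^\star}\le C\|\D h\|_2$ for $h\in\mathrm{E}$, where $2^\star = \frac{2(n+2)}{n}$ is the Hölder conjugate of $2_\star$, because then Hölder gives $|(f,h)|\le \|f\|_{2_\star}\|h\|_{2^\star}\le C\|f\|_{2_\star}\|\sqrt{\H^\star}h\|_2$.

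The main (though standard) step is this parabolic Sobolev embedding. I would prove it on the Fourier side: $\|\D h\|_2^2\simeq \int (|\xi|^2+|\tau|)|\hat h|^2$, so $h = I_1 g$ with $\hat g = (|\xi|^2+|\tau|)^{1/2}\hat h\in\L^2$, where $I_1$ is the parabolic Riesz potential with symbol $(|\xi|^2+|\tau|)^{-1/2}$. This symbol is homogeneous of degree $-1$ under the parabolic dilations $(\xi,\tau)\mapsto(\delta\xi,\delta^2\tau)$, so its kernel is homogeneous of degree $1-d$ with $d=n+2$ in the parabolic quasi-metric, bounded by $C\,\d((x,t),0)^{1-d}$. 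The Hardy--Littlewood--Sobolev inequality on the space of homogeneous type $(\R^{n+1},\d,\text{Lebesgue})$ of dimension $d$ then yields $I_1:\L^2\to\L^{2^\star}$. The only care needed is the kernel bound, obtained by scaling plus smoothness/decay of the symbol away from the origin, and a density argument from $\Cont_0^\infty$ to $\mathrm{E}$ (Fatou).
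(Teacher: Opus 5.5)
Your duality argument is correct and is the paper's proof. The paper argues for $\H^\star$ and checks $u\in\dom((\H^{-1/2})^\star)$ via $|\langle u,\H^{-1/2}v\rangle|\le\|u\|_{2_\star}\|\H^{-1/2}v\|_{2^\star}\le C\|u\|_{2_\star}\|\D\H^{-1/2}v\|_2\le C\|u\|_{2_\star}\|v\|_2$. This is your estimate $|(f,h)|\le C\|f\|_{2_\star}\|\sqrt{\H^\star}h\|_2$ with $h=\H^{-1/2}v$, and your Riesz-representation step just writes out the definition of the adjoint. For the embedding $\|h\|_{2^\star}\le C\|\D h\|_2$ the paper simply invokes Lemma~\ref{lem: Sobolev}.

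The gap is in your own proof of that embedding. The symbol $(|\xi|^2+|\tau|)^{-1/2}$ is parabolically homogeneous of degree $-1$, but it is \emph{not} smooth away from the origin. It has a kink along the whole hyperplane $\{\tau=0\}$, and this destroys the pointwise bound $|K(x,t)|\le C\,\d((x,t),0)^{1-d}$. By subordination,
\[
K(x,t)=c\int_0^\infty s^{-\frac{n+1}{2}}\,\e^{-\frac{|x|^2}{4s}}\,\frac{s}{s^2+t^2}\,\d s .
\]
At $(x,t)=(0,1)$ the integrand behaves like $s^{\frac{1-n}{2}}$ near $s=0$, so $K(0,1)=+\infty$ whenever $n\ge 3$. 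For $n\ge4$ one even has $K(x,t)\approx|t|^{-2}|x|^{3-n}$ for $|x|^2\ll|t|$. So the Hardy--Littlewood--Sobolev inequality on $(\R^{n+1},\d)$ cannot be applied as you state it.

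The cheapest repair is to use the symbol $(\ii\tau+|\xi|^2)^{-1/2}$ of $(\partial_t-\Delta_x)^{-1/2}$ instead. Its kernel $k(x,t)=\pi^{-1/2}\one_{(0,\infty)}(t)\,t^{-1/2}(4\pi t)^{-n/2}\e^{-|x|^2/(4t)}$ is nonnegative and bounded by $C\,\d((x,t),0)^{-(n+1)}$. Since $|\ii\tau+|\xi|^2|\le|\tau|+|\xi|^2$, Plancherel gives $\|g\|_2\le\|\D h\|_2$ for $\hat g\coloneqq(\ii\tau+|\xi|^2)^{1/2}\hat h$, and $h=k\ast g$. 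This is the paper's appendix argument at $p=2$, where the Marcinkiewicz multiplier is no longer needed. Alternatively, your kernel does still lie in weak $\L^{d/(d-1)}$, so O'Neil's convolution inequality would rescue your route. That fact has to be checked directly, though; it does not follow from homogeneity and smoothness of the symbol as you claim.
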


\begin{proof}
We will prove the lemma for $\H^\star$ in place of $\H$, which belongs to the same class of operators. Since $\H$ is injective, we have $(\sqrt{\H^\star})^{-1} = (\H^\star)^{-\nicefrac 12} = (\H^{-\nicefrac 12})^\star$ as closed operators in the sectorial functional calculi for $\H$ and $\H^\star$, see, e.g., \cite{egert2024harmonic}. Thus, the claim is that $\L^{2_\star}(\R^{n+1}) \cap \L^2(\R^{n+1}) \subseteq\dom((\H^{-\nicefrac 12})^\star)$.
		
So let $u \in \L^{2_\star}(\R^{n+1}) \cap \L^2(\R^{n+1})$. The parabolic Sobolev embedding Lemma \ref{lem: Sobolev} and the $\L^2$ boundedness of the Riesz transform produce a constant $C$ such that
\begin{align*}
	\abs{\langle u , \H^{-\nicefrac{1}{2}} v \rangle} \leq \norm{u}_{2_\star} \norm{\H^{-\nicefrac{1}{2}} v}_{2^\star} \leq C \norm{u}_{2_\star} \norm{\D \H^{-\nicefrac{1}{2}} v}_2 \leq C \norm{u}_{2_\star} \norm{v}_2
\end{align*}
holds for all $v \in \dom(\H^{-\nicefrac{1}{2}})$ and the claim follows.     
\end{proof}

In particular, $\mathbb{D}\mathcal{H}^{-1/2}$ is defined on $\mathcal{S}(\R^{n+1})$. By density, it has a unique bounded extension to an operator from $\L^2(\R^{n+1})$ to $\L^2(\R^{n+1})^{n+1}$. This extension is the \emph{parabolic Riesz transform associated with $\mathcal{H}$}, and we denote it by $\RH$. 

\subsection{Resolvent estimates: known results}

For $\lambda > 0$, we set
\begin{equation}
    \E_\lambda \coloneqq (1+\lambda^2 \H)^{-1}, \quad \E_\lambda^\star \coloneqq (1+\lambda^2 \H^\star)^{-1}.
\end{equation}
By $m$-accretivity, they are uniformly bounded on $\L^2(\R^{n+1})$ and adjoint to each other. We recall the following key result on uniform boundedness and off-diagonal estimates in $\L^2$.

\begin{prop}[{\cite[Lemma 4.4]{AEN2025}}]\label{prop: classical ODEs}
There are constants $C$ and $c$ such that for all $\lambda>0$ and $u\in \L^2(\R^{n+1})$ we have 
\begin{align}\label{eq: Classical L2 ODES}
    \norm{\one_F \E_\lambda \one_E u}_2 + \norm{\one_F \lambda \nabla_x \E_\lambda \one_E u}_2 \leq C \e^{-c\frac{\d(E,F)}{\lambda}} \norm{\one_E u}_2,
\end{align}
for all measurable sets $E,F \subseteq \R^{n+1}$ and
\begin{align}\label{eq: D L2 bdd}
    \norm{\lambda \mathbb{D} \E_\lambda u}_2 \leq C \norm{u}_2.
\end{align}
\end{prop}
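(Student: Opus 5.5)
The plan is to prove the uniform bound \eqref{eq: D L2 bdd} first, by energy estimates for the resolvent equation. The off-diagonal bound \eqref{eq: Classical L2 ODES} then follows from the same energy estimates applied to an exponentially conjugated operator (Davies' perturbation method), with a weight adapted to the parabolic distance.

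\textbf{Step 1: uniform bounds.} Fix $f\in\L^2(\R^{n+1})$ and set $u=\E_\lambda f\in\mathrm{E}$, so that
\begin{equation*}
    (u,v)+\lambda^2\H(u)(v)=(f,v) \qquad (v\in\mathrm{E}).
\end{equation*}
\begin{itemize}
    \item \emph{Testing with $v=u$.} The symbol of $H_t$ is $\ii\,\tau/|\tau|$, so $\iint H_tD_t^{1/2}u\cdot\overline{D_t^{1/2}u}=\ii\iint \operatorname{sgn}(\tau)|\tau|\,|\F u|^2$ is purely imaginary. Taking real parts and using \eqref{eq: ellipticité A} gives
    \begin{equation*}
        \|u\|_2^2+\nu\lambda^2\|\nabla_xu\|_2^2\le\|f\|_2\|u\|_2 .
    \end{equation*}
    Hence $\|u\|_2+\lambda\|\nabla_x u\|_2\lesssim\|f\|_2$.
    \item \emph{Testing with $v=H_tu$ (Kaplan's trick).} This is admissible since $H_t$ is an isometry of $\mathrm{E}$ commuting with $\nabla_x$ and $D_t^{1/2}$. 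Because $H_t^\star=-H_t$ and $H_t^2=-1$, the time part of the form equals $\|D_t^{1/2}u\|_2^2$. The elliptic part is bounded by $M\|\nabla_xu\|_2^2$, and $(u,H_tu)$ is purely imaginary. Taking real parts yields
    \begin{equation*}
        \lambda^2\|D_t^{1/2}u\|_2^2\le \|f\|_2\|u\|_2+M\lambda^2\|\nabla_xu\|_2^2\lesssim\|f\|_2^2 .
    \end{equation*}
\end{itemize}
Together these give \eqref{eq: D L2 bdd}.

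\textbf{Step 2: weighted resolvent.} Let $\delta=\d(E,F)$. If $\delta\le\lambda$, the claim follows from Step 1, so assume $\delta>\lambda$. Choose a bounded Lipschitz weight $\varphi$ with
\begin{equation*}
    |\nabla_x\varphi|\le \frac{\alpha}{\lambda},\qquad |\partial_t\varphi|\le\frac{\alpha}{\lambda\delta},\qquad \varphi\ge\alpha\frac{\delta}{\lambda}\ \text{on } F,\qquad \varphi=0 \ \text{on } E .
\end{equation*}
Concretely, take $\varphi=\frac{\alpha}{\lambda}\min\bigl(\delta,\max(\operatorname{dist}_x,\operatorname{dist}_t/\delta)\bigr)$ built from the distances to $E$ in $x$ and in $t$. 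The time slope is chosen so that a time separation of $\delta^2$ still produces $\varphi\sim\alpha\delta/\lambda$.

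Next, consider $u_\varphi=\e^{\varphi}\E_\lambda(\one_E f)$. It solves the resolvent equation for the conjugated operator
\begin{equation*}
    \e^{\varphi}\H\e^{-\varphi}=\H+B_\varphi ,
\end{equation*}
where $B_\varphi$ collects
\begin{itemize}
    \item the multiplication by $-\partial_t\varphi$, of size at most $\alpha/(\lambda\delta)\le\alpha/\lambda^2$;
    \item first-order spatial terms $A\nabla_x\varphi\cdot$ and $\Div_x(\cdot\,A\nabla_x\varphi)$, of size $\alpha/\lambda$ times $\nabla_x$.
\end{itemize}
Redoing the first energy estimate of Step 1 with these terms gives errors of order $\alpha\|u_\varphi\|_2^2$ and $\alpha\lambda\|u_\varphi\|_2\|\nabla_xu_\varphi\|_2$. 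For $\alpha$ small these are absorbed, leaving
\begin{equation*}
    \|u_\varphi\|_2+\lambda\|\nabla_xu_\varphi\|_2\lesssim\|\one_Ef\|_2 .
\end{equation*}
Since $\varphi\ge\alpha\delta/\lambda$ on $F$ and $\e^{\varphi}\one_E f=\one_E f$, this yields \eqref{eq: Classical L2 ODES} with $c=\alpha$. The gradient of $\e^{\varphi}u$ and $\e^{\varphi}\nabla_x u$ differ by $\nabla_x\varphi\, u$, which is controlled by the bound on $\|u_\varphi\|_2$.

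\textbf{Main obstacle.} The delicate point is the rigorous justification of the conjugation in the time direction. The time part of the form is written as $H_tD_t^{1/2}$, which is nonlocal, and multiplication by $\e^{\pm\varphi}$ must be shown to preserve $\mathrm{E}$. I would handle this by truncating the weight, using bounded $\varphi$ with $\nabla\varphi\in\L^\infty$, so that $\e^{\pm\varphi}\in\Cont^1_{\mathrm b}$ up to mollification. Such multipliers act boundedly on $\mathrm{E}$, as recalled earlier in this section. It then remains to verify the identity
\begin{equation*}
    \Re\iint H_tD_t^{1/2}(\e^{-\varphi}w)\cdot\overline{D_t^{1/2}(\e^{\varphi}w)}=-\iint\partial_t\varphi\,|w|^2 .
\end{equation*}
This is the weak form of $\e^{\varphi}\partial_t\e^{-\varphi}=\partial_t-\partial_t\varphi$, and I would check it first for $w\in\Cont_0^\infty$ and then extend by density.
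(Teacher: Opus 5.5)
Your proof is correct. The paper does not prove this proposition itself but imports it from \cite{AEN2025}. Your argument is the standard route: Kaplan's trick (testing with $H_t u$) gives the $D_t^{1/2}$ part of \eqref{eq: D L2 bdd}, and an exponential weight with time slope $\alpha/(\lambda\delta)\le\alpha/\lambda^2$ makes the local commutator $-\partial_t\varphi$ absorbable. It is the space-time analogue of the paper's own proof of Proposition~\ref{prop: L2 spatial supports}, and it exploits the same locality of $[\H,\eta]$ that drives Proposition~\ref{prop: time supports}.

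Two details should be stated more precisely.

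First, the weight must be built from the distance to $E$ in an anisotropic metric. Take $\varphi=\frac{\alpha}{\lambda}\min(\delta,\rho)$ with $\rho(x,t)=\inf_{(y,s)\in E}\max\bigl(|x-y|_\infty,|t-s|/\delta\bigr)$.
\begin{itemize}
\item $\rho$ is Lipschitz with slope $1$ in $x$ and $1/\delta$ in $t$.
\item $\rho=0$ on $E$.
\item $\rho\ge\delta$ on $F$, because $\d(E,F)\ge\delta$ forces $|x-y|_\infty\ge\delta$ or $|t-s|\ge\delta^2$.
\end{itemize}
If your $\operatorname{dist}_x$ and $\operatorname{dist}_t$ denote distances to the $x$- and $t$-projections of $E$, their maximum does not work. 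For a disconnected $E$, both projections can cover those of $F$ while $\d(E,F)$ is large.

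Second, $B_\varphi$ also contains the zeroth-order term $-(A\nabla_x\varphi\cdot\nabla_x\varphi)$, of size $M\alpha^2/\lambda^2$. It is absorbed in the same way for $\alpha$ small.
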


\section{Toolbox for \texorpdfstring{ $\L^p-\L^q$}{} bounded families} \label{sec: Toolbox}

We recall, in the parabolic scaling, several general abstract principles concerning  $\L^{p}-\L^q$ boundedness for families $(T_\lambda)_{\lambda \in \mathcal{U}}$ of bounded operators acting between $\L^2(\R^{n+1})$-spaces. In our case, these families will mostly be powers of the resolvent family $(\E_\lambda)_{\lambda>0}$ or their parabolic gradients.

\begin{defn}\label{def: toolbox}
Let $\mathcal{U} \subseteq \C \setminus \{0\}$ and $(T_\lambda)_{\lambda \in \mathcal{U}}$ be an operator family as above. Let $1\le p \le q \le \infty$. Then $(T_\lambda)_{\lambda \in \mathcal{U}}$ is said to 
\begin{enumerate}[(i)]
    \item be  $\L^p-\L^q$ bounded if there exists a constant $C$ such that
    \begin{equation*}
        \| T_\lambda f\|_q \le C |\lambda|^{-\gamma_{p,q}} \|  f\|_p,
    \end{equation*}
    for all $\lambda\in \mathcal{U}$ and all $f \in \L^p(\R^{n+1}) \cap \L^2(\R^{n+1})$.
    \item satisfy the  $\L^p-\L^q$ off-diagonal estimates if there exist constants $C,c$ such that
    \begin{equation*}
        \| \one_F T_\lambda (\one_Ef)\|_q \le C |\lambda|^{-\gamma_{p,q}}\e^{-c \frac{\d(E,F)}{|\lambda|}} \| \one_Ef\|_p,
    \end{equation*}
    for all $\lambda\in \mathcal{U}$, all $f \in \L^p(\R^{n+1}) \cap \L^2(\R^{n+1})$ and all measurable sets $E,F\subseteq\mathbb{R}^{n+1}$.
\end{enumerate}
When $p=q$, we speak of  $\L^p$ boundedness and  $\L^p$ off-diagonal estimates, respectively.
\end{defn}

The following is proved in \cite[Chapter 4]{AEbook2023}. (Proofs have nothing to do with the particular elliptic operators under consideration in this reference.)

\begin{lem}\label{lem: Toolbox}
Let $(T_\lambda)_{\lambda \in \mathcal{U}}$ and $(S_\lambda)_{\lambda \in \mathcal{U}}$ be operator families as above. Let $1\le p \le q \le r \le \infty$.
\begin{enumerate}[(i)]
    \item \label{item: Duality principle} \emph{(Duality)} $(T_\lambda)_{\lambda \in \mathcal{U}}$ is  $\L^p-\L^q$ bounded if and only if $(T^\star_\lambda)_{\lambda \in \mathcal{U}}$ is  $\L^{q'}-\L^{p'}$ bounded.
    \item \emph{(Composition)}\label{item: Composition} If $(T_\lambda)_{\lambda \in \mathcal{U}}$ is  $\L^p-\L^q$ bounded and $(S_\lambda)_{\lambda \in \mathcal{U}}$ is  $\L^q-\L^r$ bounded, then $(S_\lambda T_\lambda)_{\lambda \in \mathcal{U}}$ is  $\L^p-\L^r$ bounded.
    \item \label{item: Interpolation principle 1} \emph{(Interpolation 1)} If $(T_\lambda)_{\lambda \in \mathcal{U}}$ is  $\L^{p_0}-\L^{q_0}$ bounded and $\L^{p_1}-\L^{q_1}$ bounded, then $(T_\lambda)_{\lambda \in \mathcal{U}}$ is  $\L^{[p_0,p_1]_\theta}-\L^{[q_0,q_1]_\theta}$ bounded for every $\theta \in [0,1]$. 
    \item \label{item: Interpolation principle} \emph{(Interpolation 2)} If $(T_\lambda)_{\lambda \in \mathcal{U}}$ is  $\L^{p_0}-\L^{q_0}$ bounded and satisfies $\L^{p_1}-\L^{q_1}$ off-diagonal estimates, then $(T_\lambda)_{\lambda \in \mathcal{U}}$ satisfies $\L^{[p_0,p_1]_\theta}-\L^{[q_0,q_1]_\theta}$ off-diagonal estimates for every $\theta \in (0,1)$. 
    \item \emph{(Extrapolation)}\label{item: Extrapolation} If $(T_\lambda)_{\lambda \in \mathcal{U}}$ satisfies  $\L^p-\L^q$ off-diagonal estimates, then it is  $\L^p$ bounded and $\L^q$ bounded.
\end{enumerate}
Parts \ref{item: Duality principle} -- \ref{item: Interpolation principle 1} remain valid if boundedness is replaced by off-diagonal estimates.
\end{lem}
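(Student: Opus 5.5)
The statement is Lemma~\ref{lem: Toolbox}, a collection of soft principles. I would prove each part directly from Definition~\ref{def: toolbox}, using only the scaling identity $\gamma_{p,q}+\gamma_{q,r}=\gamma_{p,r}$ for $p\le q\le r$ and the symmetry $\gamma_{p,q}=\gamma_{q',p'}$. This identity holds because $\nicefrac{1}{p}-\nicefrac{1}{q}=\nicefrac{1}{q'}-\nicefrac{1}{p'}$.

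\emph{Duality (i).} Take $f\in\L^{q'}\cap\L^2$ and $g\in\L^p\cap\L^2$. Then
\[
|\langle T_\lambda^\star f,g\rangle| = |\langle f,T_\lambda g\rangle| \le C|\lambda|^{-\gamma_{p,q}}\|f\|_{q'}\|g\|_p .
\]
Taking the supremum over $g$ in the dense subspace $\L^p\cap\L^2$ (or over $\L^1\cap\L^2$ when $p=1$, with the usual convention) gives the $\L^{p'}$ norm bound. The off-diagonal version is identical, since $(\one_F T_\lambda\one_E)^\star=\one_E T_\lambda^\star\one_F$ and $\d(E,F)$ is symmetric.

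\emph{Composition (ii).} For boundedness, chain the two estimates and add the exponents. For off-diagonal estimates, I follow the standard route in \cite{AEbook2023}. Set $\rho=\d(E,F)$ and $G=\{z:\d(z,E)<\rho/2\}$, and write $S_\lambda T_\lambda \one_E=S_\lambda\one_G T_\lambda\one_E+S_\lambda\one_{G^c}T_\lambda\one_E$. In the first term $\d(G,F)\ge\rho/2$, and the parabolic distance obeys the triangle inequality since it is a max of two metrics ($\sqrt{|t-s|}$ is a metric). So the decay comes from $S_\lambda$, while $T_\lambda$ only contributes boundedness. In the second term $\d(E,G^c)\ge\rho/2$, and the decay comes from $T_\lambda$. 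Each term gives $\e^{-c\rho/(2|\lambda|)}$.

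\emph{Interpolation 1 (iii).} Apply Riesz--Thorin to each fixed $T_\lambda$, or to $\one_F T_\lambda \one_E$ for the off-diagonal version. The resulting constant is
\[
C_0^{1-\theta}C_1^\theta|\lambda|^{-(1-\theta)\gamma_{p_0,q_0}-\theta\gamma_{p_1,q_1}} .
\]
Since $\gamma$ is affine in $(\nicefrac1p,\nicefrac1q)$ when $p_i\le q_i$, the exponent equals $\gamma_{[p_0,p_1]_\theta,[q_0,q_1]_\theta}$. The exponential factor interpolates to $\e^{-c\theta\rho/|\lambda|}$, which is still of the required form. One technical point: the operators are a priori only defined on $\L^p\cap\L^2$. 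I would handle this by interpolating on simple functions, which are dense.

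\emph{Interpolation 2 (iv).} Apply Riesz--Thorin to $\one_F T_\lambda\one_E$, using the plain bound (times $\|\one_F\cdots\one_E\|$, which is harmless) at endpoint $0$ and the off-diagonal bound at endpoint $1$. This yields the factor $\e^{-c\theta\rho/|\lambda|}$, which is why $\theta>0$ is needed.

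\emph{Extrapolation (v).} This is the only genuinely nontrivial step. Fix $\lambda$ and tile $\R^{n+1}$ into parabolic cubes $\Delta_k$ of radius $|\lambda|$. For $\L^q$ boundedness, write $f=\sum_k\one_{\Delta_k}f$ and estimate $\|\one_{\Delta_m}T_\lambda f\|_q\le\sum_k C|\lambda|^{-\gamma_{p,q}}\e^{-c\,\d(\Delta_k,\Delta_m)/|\lambda|}\|\one_{\Delta_k}f\|_p$. Hölder on each cube gives $\|\one_{\Delta_k}f\|_p\le|\Delta_k|^{\nicefrac1p-\nicefrac1q}\|\one_{\Delta_k}f\|_q$, and $|\Delta_k|^{\nicefrac1p-\nicefrac1q}\sim|\lambda|^{\gamma_{p,q}}$, so the powers of $\lambda$ cancel. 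The coefficients $\e^{-c\,\d(\Delta_k,\Delta_m)/|\lambda|}$ form a matrix whose row and column sums are bounded independently of $\lambda$. This is because, in rescaled lattice coordinates $(k',k_t)$, the parabolic distance is about $\max(|k'|,\sqrt{|k_t|})$ and $\sum_{k',k_t}\e^{-c\max(|k'|,\sqrt{|k_t|})}<\infty$. Schur's test on $\ell^q$ then finishes the argument. For $\L^p$ boundedness, apply the same argument on the output side: use Hölder as $\|\one_{\Delta_m}g\|_p\le|\Delta_m|^{\nicefrac1p-\nicefrac1q}\|\one_{\Delta_m}g\|_q$ and Schur's test on $\ell^p$. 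Checking that the parabolic counting sum converges is the only place where any care is needed.
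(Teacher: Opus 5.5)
Your proposal is correct and follows the standard arguments of \cite[Chapter~4]{AEbook2023}, to which the paper defers without giving a proof of its own; you also correctly identify the only parabolic-specific inputs, namely that $\d$ is a metric and that the lattice sum $\sum_{k',k_t}\e^{-c\max(|k'|,\sqrt{|k_t|})}$ converges. One small slip: in the off-diagonal version of (iii) both endpoints carry decay, so the interpolated factor is $\e^{-((1-\theta)c_0+\theta c_1)\d(E,F)/|\lambda|}$ rather than $\e^{-c\theta\,\d(E,F)/|\lambda|}$, and this form also covers $\theta=0$.
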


We also recall a bootstrapping argument which follows \emph{verbatim} from the proof of \cite[Lemma~4.4]{AEbook2023}, where the case $q=2$ was discussed. 

\begin{lem}[Triangle interpolation]\label{lem: gives m}
Let $q\in (1,\infty]$ and $(T_\lambda)_{\lambda \in \mathcal{U}}$ be $\L^q$ bounded. Assume that there exist $p,\varrho \in [1,q)$ such that $(T_\lambda)_{\lambda \in \mathcal{U}}$ is  $\L^p$ bounded and  $\L^\varrho-\L^q$ bounded. Then, for all $r\in (p,q]$, there exists an integer $m \ge 1$ such that $(T^m_\lambda)_{\lambda \in \mathcal{U}}$ is  $\L^r-\L^q$ bounded. 
\end{lem}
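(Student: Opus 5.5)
We prove Lemma~\ref{lem: gives m} by interpolating repeatedly between the given bounds and composing, as in \cite[Lemma~4.4]{AEbook2023}. Recall that $\L^a-\L^b$ boundedness of $(T_\lambda)$ means $\|T_\lambda f\|_b \le C|\lambda|^{-\gamma_{a,b}}\|f\|_a$. The scaling exponents add under composition, since $\gamma_{a,b}+\gamma_{b,c}=\gamma_{a,c}$ for $a\le b\le c$. Hence Lemma~\ref{lem: Toolbox}\ref{item: Composition} has no scaling obstruction. It will be convenient to work with the reciprocals $x=\nicefrac1a$ and $y=\nicefrac1b$. In these coordinates the pairs $(x,y)$ for which $(T_\lambda)$ is $\L^a-\L^b$ bounded form a convex set, by Lemma~\ref{lem: Toolbox}\ref{item: Interpolation principle 1}. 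By hypothesis this set contains the three points
\[
P=\bigl(\tfrac1p,\tfrac1p\bigr),\qquad Q=\bigl(\tfrac1q,\tfrac1q\bigr),\qquad R=\bigl(\tfrac1\varrho,\tfrac1q\bigr),
\]
and therefore the whole triangle they span.

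\textbf{Step 1: reduction.} Fix $r\in(p,q]$. If $r=q$ we can take $m=1$. Otherwise $\nicefrac1q<\nicefrac1r<\nicefrac1p$. Assume first that $\varrho\le p$, so that $\nicefrac1\varrho\ge\nicefrac1p$. The segment from $R$ to $Q$ covers every pair $(x,\nicefrac1q)$ with $x\in[\nicefrac1q,\nicefrac1\varrho]$. This includes $x=\nicefrac1r$, so $m=1$ suffices. The interesting case is therefore $\varrho>p$. Then we can only improve the integrability index step by step, passing through intermediate exponents.

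\textbf{Step 2: one gain step.} Interpolate $P$ with $R$ using weight $\theta\in(0,1)$. This gives the point
\[
\bigl((1-\theta)\tfrac1p+\theta\tfrac1\varrho,\; (1-\theta)\tfrac1p+\theta\tfrac1q\bigr).
\]
Here the first coordinate exceeds the second by $\theta\bigl(\tfrac1q-\tfrac1\varrho\bigr)\cdot(-1)=\theta\bigl(\tfrac1\varrho-\tfrac1q\bigr)=:\theta\delta$, where $\delta>0$. Now choose $\theta$ as a function of the starting exponent $x$. For any $x\in(\nicefrac1q,\nicefrac1r]$, set the first coordinate equal to $x$; this forces $\theta$ to be bounded below by a constant depending only on $p,\varrho,q$, as long as $x$ stays below $\nicefrac1r<\nicefrac1p$. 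So from every such $x$ we obtain $\L^{1/x}-\L^{1/y}$ boundedness with $y\le x-\eta$, for a uniform gain $\eta>0$ that depends on $r$. To make this explicit, write $x=(1-\theta)\tfrac1p+\theta\tfrac1\varrho$. The resulting $\theta$ lies in a compact subinterval of $(0,1]$ when $x\in[\nicefrac1q,\nicefrac1r]$, provided we also allow interpolation with $Q$ near the bottom. It is simplest to use all three vertices: the triangle contains the segment $\{(x,\max(x-\eta,\nicefrac1q))\}$ for $x\in[\nicefrac1q,\nicefrac1r]$. This holds because the lower edge from $R$ toward $P$ has slope strictly less than $1$, and the distance to the diagonal is bounded below on $x\le \nicefrac1r$.

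\textbf{Step 3: iterate.} Starting from $x_0=\nicefrac1r$, define $x_{k+1}=\max(x_k-\eta,\nicefrac1q)$. By Step 2 each $T_\lambda$ maps $\L^{1/x_k}$ to $\L^{1/x_{k+1}}$ with the correct scaling. After $m\le 1+(\nicefrac1r-\nicefrac1q)/\eta$ steps we reach $\nicefrac1q$. Composing these bounds with Lemma~\ref{lem: Toolbox}\ref{item: Composition} shows that $(T_\lambda^m)$ is $\L^r-\L^q$ bounded.

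The main obstacle is the geometric claim in Step 2: the gain $\eta$ must be uniform. It can degenerate only as $x\to\nicefrac1p$, where the triangle touches the diagonal. This is exactly why we need $r>p$, and why $m$ depends on $r$.
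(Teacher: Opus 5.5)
Your proposal is correct and follows the same route as the paper. The paper gives no proof of its own and defers verbatim to \cite[Lemma~4.4]{AEbook2023}, which uses exactly this triangle argument: convexity of the admissible exponent pairs via Riesz--Thorin, a uniform gain away from the vertex $P$, and composition of finitely many steps.

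There is one slip in Step~2. The edge from $R=(1/\varrho,1/q)$ to $P=(1/p,1/p)$ has slope $(1/p-1/q)/(1/p-1/\varrho)>1$, not $<1$. This, together with the edge meeting the diagonal only at $P$, makes the vertical gap to the diagonal shrink towards $P$, so on $[1/\varrho,1/r]$ it is bounded below by its positive value at $x=1/r$. Accordingly, your $\theta$ and $\eta$ depend on $r$, not only on $p,\varrho,q$.
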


\section{Space-time decay for \texorpdfstring{$(\lambda \D \mathcal{E}_\lambda)_{\lambda>0}$}{}}\label{S4}

In this section, we prove off-diagonal decay for $(\lambda \mathbb{D} \mathcal{E}_\lambda)_{\lambda>0}$. Our main contribution concerns $(\lambda D_t^{1/2} \mathcal{E}_\lambda)_{\lambda>0}$ involving the non-local operator $D_t^{1/2}$. Our strategy is to split the supports into spatial and temporal components and estimate them separately. 

\subsection{Off-diagonal estimates for \texorpdfstring{$\boldsymbol{\lambda D_t^{1/2} \mathcal{E}_\lambda}$}{}}

We begin with the spatial supports, where the argument is very similar to the proof of \cite[Proposition~2.7]{AEN2025}.
		
\begin{prop}[Off-diagonal estimates on spatial supports]\label{prop: L2 spatial supports}
There exist constants $C$ and $c$ such that the following off-diagonal estimates hold: 
\begin{equation}\label{eq: Dt space off}
\|\one_{F\times \R} \, \lambda D_t^{1/2} \mathcal{E}_\lambda (\one_{E\times\R} \, u) \|_{2} \le C \e^{-c\frac{\d(E,F)}{\lambda}} \|\one_{E\times \R} \, u\|_{2}, 
\end{equation}
for all $\lambda>0$, measurable sets $E,F\subseteq\mathbb{R}^n$ and $u\in \L^2(\R^{n+1})$, where $\d(E,F)$ denotes the Euclidean distance between $E$ and $F$ in $\mathbb{R}^n$.
\end{prop}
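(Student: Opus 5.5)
The plan is to use the fact that multiplication by functions of $x$ alone commutes with $D_t^{1/2}$ and with $H_t$. This turns the off-diagonal estimate into an energy estimate combined with a conjugation (Davies perturbation) argument, as in the proof of \cite[Proposition~2.7]{AEN2025}.

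\textbf{Reduction.} We may assume $\d(E,F)\ge\lambda$; otherwise \eqref{eq: D L2 bdd} already gives the claim. Choose a Lipschitz function $\phi=\phi(x)$ on $\R^n$ with $\|\nabla\phi\|_\infty\le 1$, $\phi=0$ on $E$ and $\phi=\d(E,F)$ on $F$. For $\alpha>0$ small, to be fixed later, write $u_E:=\one_{E\times\R}u$ and set $v:=\E_\lambda u_E\in\mathrm{E}$. Since $\e^{\alpha\phi/\lambda}$ depends only on $x$, we have
\[
\e^{\alpha\phi/\lambda}D_t^{1/2}v=D_t^{1/2}\bigl(\e^{\alpha\phi/\lambda}v\bigr).
\]
(If $\phi$ is unbounded, truncate it, e.g.\ replace it by $\min(\phi,\d(E,F))$, so that multiplication preserves $\mathrm{E}$.) On $F$ the weight equals $\e^{\alpha\d(E,F)/\lambda}$. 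It therefore suffices to prove the uniform bound
\[
\|\lambda D_t^{1/2}(\e^{\alpha\phi/\lambda}v)\|_2\le C\|u_E\|_2,
\]
because $\e^{\alpha\phi/\lambda}u_E=u_E$.

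\textbf{Conjugated equation.} Put $w:=\e^{\alpha\phi/\lambda}v$. Then $w\in\mathrm{E}$ solves the conjugated resolvent equation
\[
w+\lambda^2\H w=u_E+\text{error},
\]
where, because the weight commutes with $\partial_t$, the error contains only the spatial commutator terms:
\[
\lambda^2\Div_x\bigl(A\,(\alpha\lambda^{-1}\nabla\phi)\,w\bigr)+\lambda^2\,(\alpha\lambda^{-1}\nabla\phi)\cdot A\nabla_x w+\lambda^2\alpha^2\lambda^{-2}(\ldots)w.
\]
Test this equation with $w$ and take real parts. The term $H_tD_t^{1/2}w\cdot\overline{D_t^{1/2}w}$ is purely imaginary pointwise in Fourier space, so it contributes nothing. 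Using ellipticity \eqref{eq: ellipticité A} this gives
\[
\|w\|_2^2+\nu\lambda^2\|\nabla_x w\|_2^2\le |\langle u_E,w\rangle|+CM\alpha\lambda\|w\|_2\|\nabla_xw\|_2+CM\alpha^2\|w\|_2^2 .
\]
For $\alpha$ small, absorption yields $\|w\|_2+\lambda\|\nabla_xw\|_2\le C\|u_E\|_2$, which is the known estimate \eqref{eq: Classical L2 ODES}.

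\textbf{Main obstacle: the time derivative.} The energy identity controls $\nabla_x w$ but not $D_t^{1/2}w$, since the real part kills the time term. To get $D_t^{1/2}w$ I would take the imaginary part as well, that is, test against $H_t w$ (the Kaplan trick), as in the $\L^2$ theory. Since $H_t$ also commutes with functions of $x$, testing with $(1+\delta H_t)w$ gives the identity $\Re\langle H_tD_t^{1/2}w,D_t^{1/2}H_tw\rangle=\|D_t^{1/2}w\|_2^2$. The remaining terms are bounded by $C\lambda^2\|\nabla_x w\|_2^2$, by the commutator error terms, which are again $\lesssim\alpha(\|w\|_2^2+\lambda^2\|\nabla_xw\|_2^2)$, and by $\|u_E\|_2\|w\|_2$. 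Here $H_t$ is an $\L^2$ isometry commuting with $\nabla_x$ and with the weight. Choosing $\delta$ small then gives
\[
\lambda^2\|D_t^{1/2}w\|_2^2\le C\|u_E\|_2^2 .
\]
Finally I restrict to $F\times\R$, where $w=\e^{\alpha\d(E,F)/\lambda}v$, which yields \eqref{eq: Dt space off} with $c=\alpha$.
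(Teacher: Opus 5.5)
Your proof is correct, but it takes a different route from the paper's.

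\textbf{The paper's route.} The paper first reduces, by duality, to the operator $\lambda\E_\lambda D_t^{1/2}$, for both $\H$ and $\H^\star$. Then $w=\lambda\E_\lambda D_t^{1/2}(\one_{E\times\R}u)$ solves $(1+\lambda^2\H)w=\lambda D_t^{1/2}(\one_{E\times\R}u)$. It tests with $\eta^2w$, where $\eta=\e^{\alpha\d(E,F)\widetilde{\eta}/\lambda}-1$ depends only on $x$ and vanishes on $E$.
\begin{enumerate}[(i)]
\item The source term vanishes identically, because $D_t^{1/2}(\eta^2w)=\eta^2D_t^{1/2}w$.
\item The time term drops out by skew-adjointness of $H_t$.
\item A Caccioppoli-type absorption then gives $\e^{\alpha\d(E,F)/\lambda}\|\one_{F\times\R}w\|_2\le C\|w\|_2$.
\item The global bound $\|w\|_2\le C\|\one_{E\times\R}u\|_2$ is imported from \eqref{eq: D L2 bdd} by duality.
\end{enumerate}
So the paper never has to estimate a half-order time derivative of a solution.

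\textbf{Your route.} You keep $D_t^{1/2}$ on the output and conjugate by the Davies weight $\e^{\alpha\phi/\lambda}$. You then recover $\lambda\|D_t^{1/2}w\|_2$ through the hidden coercivity behind Kaplan's trick. This works because $H_t$ is a unimodular multiplier that commutes with $x$-dependent weights, with $\nabla_x$ and with $D_t^{1/2}$. Taking the real part of the conjugated equation tested with $H_tw$ puts $\lambda^2\|D_t^{1/2}w\|_2^2$ on one side. The other side contains only quantities already controlled by your first weighted energy estimate, so the parameter $\delta$ is not actually needed.

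\textbf{What each buys.} Your route is self-contained. It reproves \eqref{eq: D L2 bdd} (take $\phi=0$) and gives the weighted bounds for $\E_\lambda$, $\lambda\nabla_x\E_\lambda$ and $\lambda D_t^{1/2}\E_\lambda$ in one stroke, without duality. The paper's route is shorter once the $\L^2$ theory is available. Keeping the gradient term in its Caccioppoli inequality also yields the companion bound for $\lambda D_t^{1/2}\E_\lambda\lambda\Div_x$, recorded in the subsequent remark, essentially for free.
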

\begin{proof}
It suffices to prove that
\begin{equation*}
     \norm{\one_{F\times\R} (\lambda \mathcal{E}_\lambda D_t^{1/2}(\one_{E\times\R} u)  )}_2 
    \le C \e^{-c \frac{\d(E,F)}{\lambda}} \norm{\one_{E\times\R} u }_2.
\end{equation*}
Indeed, the same inequality also holds for $(\mathcal{E}^\star_\lambda)_{\lambda>0}$ (that is, for $\H^\star$ in place of $\H$), and we conclude by duality.

Let $\d\coloneqq \d(E,F)$. It suffices to treat the case $\lambda \le \alpha \d$, where $\alpha \in (0,1)$ is a free parameter to be fixed later. Indeed, if $\lambda > \alpha \d$, then $1\leq\e^{\nicefrac{1}{\alpha}}\e^{-\nicefrac{\d}{\lambda}}$, and the claim follows already from \eqref{eq: D L2 bdd}. We fix $\widetilde{\eta} \in \Cont_\mathrm{b}^\infty(\mathbb{R}^n)$ such that $\widetilde{\eta} = 0$ on $E$, $\widetilde{\eta} = 1$ on $F$, $0 \le \widetilde{\eta} \le 1$ and $\norm{\nabla_x \widetilde{\eta}}_{L^\infty(\mathbb{R}^n)} \le \nicefrac{c}{d}$ with $c>0$ depending only on $n$. We then set
\begin{equation*}
\eta \coloneqq\e^{\frac{\alpha \d}{\lambda} \widetilde{\eta}} - 1, \qquad
w \coloneqq \lambda \mathcal{E}_\lambda D_t^{1/2}(\one_{E\times\R} u), \qquad
\text{and} \quad v \coloneqq  w \eta^2.
\end{equation*}
We have $(1+\lambda^2 \H) w = \lambda D_t^{1/2}(\one_{E\times\R} u)$ in $\mathrm{E}^\star$, and testing against $v$ gives
\begin{align*}
        \iint_{\R^{n+1}} \abs{w}^2 \eta^2 &+ \lambda^2 H_t D_t^{1/2} w \cdot \overline{D_t^{1/2} (w \eta^2)}+ \lambda^2  A \nabla_x w \cdot \overline{\nabla_x(w \eta^2)}  \\
        &= \lambda \iint_{\R^{n+1}} \one_{E\times\R} u \cdot \overline{D_t^{1/2}(w\eta^2)}. 
\end{align*}
As $D_t^{1/2}(w\eta^2)=\eta^2 D_t^{1/2}w$ by $t$-independence of $\eta$, we see from $\eta^2 \one_{E\times\R} u=0$ that the right-hand side vanishes and from skew-adjointness of $H_t$ on $\L^2(\R^{n+1})$ that so does the real part of the second term on the left:
\begin{equation*}
   \mathrm{Re}\left ( \iint_{\R^{n+1}} H_t D_t^{1/2} w \cdot \overline{D_t^{1/2} (w \eta^2)}  \right ) =\mathrm{Re}\left ( \iint_{\R^{n+1}} H_t (\eta D_t^{1/2} w) \cdot \overline{\eta D_t^{1/2}w} \right ) = 0.
\end{equation*}
Thus, taking the real part in the variational equality above, we obtain
\begin{equation*}
\iint_{\R^{n+1}} \abs{w}^2 \eta^2  +  \lambda^2 \iint_{\R^{n+1}} \mathrm{Re}(A \, \eta \nabla_x w \cdot \overline{\eta\nabla_xw})    
        = -2\lambda^2 \iint_{\R^{n+1}} \mathrm{Re}(A \nabla_x w \cdot \overline{\nabla_x\eta}) \, \eta  w .
\end{equation*}
Boundedness and ellipticity of $A$ as in \eqref{eq: ellipticité A} yield
\begin{align*}
\iint_{\R^{n+1}} \abs{w}^2 \eta^2  +  \lambda^2 \nu \iint_{\R^{n+1}} | \eta \nabla_x w |^2     
        &\le 2M\lambda^2 \iint_{\R^{n+1}} |\eta \nabla_x w | |w \nabla_x \eta|
        \\&\le \frac{M\lambda^2}{\varepsilon} \iint_{\R^{n+1}}  |w \nabla_x \eta|^2 +M\lambda^2 \varepsilon \iint_{\R^{n+1}} |\eta \nabla_x w |^2,
\end{align*}
where we have used the inequality $2ab \le \frac{a^2}{\varepsilon} + \varepsilon b^2$, with $\varepsilon>0$ to be fixed later. Thus,  
\begin{equation*}
    \iint_{\R^{n+1}} \abs{w}^2 \eta^2  + (\nu-M\varepsilon) \lambda^2  \iint_{\R^{n+1}} |\nabla_x w |^2 \eta^2 \le \frac{M\lambda^2}{\varepsilon} \iint_{\R^{n+1}}  |w|^2 | \nabla_x \eta|^2. 
\end{equation*}
Since $\nabla_x \eta = \frac{\alpha \d}{\lambda} (\eta + 1) \nabla_x \widetilde{\eta}$, we have $\lvert \nabla_x \eta \rvert \le \frac{c \alpha}{\lambda} (\eta + 1)$, and using the elementary inequality $(a+b)^2 \le 2a^2 + 2b^2$ in order to absorb terms from the right to the left, we obtain
\begin{align*}
    \left ( 1-\frac{2c^2M\alpha^2}{\varepsilon} \right )\iint_{\R^{n+1}} \abs{w}^2 \eta^2  + (\nu-M\varepsilon) \lambda^2  \iint_{\R^{n+1}} |\nabla_x w |^2 \eta^2 
    \le \frac{2c^2M\alpha^2}{\varepsilon} \iint_{\R^{n+1}}  |w|^2.
\end{align*}
At this point, we pick $\varepsilon \coloneqq \nicefrac{\nu}{2M}$, so that $\nu - M \varepsilon = \nicefrac{\nu}{2}$, and choose $\alpha$ sufficiently small so that $1 - \nicefrac{2c^2 M \alpha^2}{\varepsilon} \geq \nicefrac{1}{2}$. Since we have $\eta \ge\e^{\nicefrac{\alpha \d}{\lambda}} - 1 \ge \nicefrac{1}{2}\e^{\nicefrac{\alpha \d}{\lambda}}$ on $F$, we have shown that
 \begin{equation*}
    \e^{\alpha \frac{\d}{\lambda}} \Bigl(\norm{\one_{F\times\R} w}_2+\norm{\one_{F\times\R} \lambda \nabla_x  w }_2 \Bigr) \le C  \norm{w}_2.
\end{equation*}
Since $\|w\|_2 \leq C \|\one_{E\times\R} u\|_2$ by \eqref{eq: D L2 bdd}, the bound for the first term on the left is the required off-diagonal estimate.
\end{proof}
\begin{rem}
In fact, keeping the second term on the left-hand side in the final equation shows that for all $\mathbf{u} \in \L^2(\mathbb{R}^{n+1})^n$ we also have
    \begin{equation*}
    \|\one_{F\times \R} \, \lambda D_t^{1/2}  \mathcal{E}_\lambda \big( \lambda \, \Div_x  (\one_{E \times \R } \, \textbf{u}) \big) \|_{2} \le C \e^{-c\frac{\d(E,F)}{\lambda}} \|\one_{E\times \R} \, \textbf{u}\|_{2},
\end{equation*}
where $C$ and $c$ are the same constants as above.
\end{rem}

We now turn to the temporal supports, for which we obtain the following key result. On a first reading, we suggest to simply think of $p=2$. It will become clear later on why, in contrast to Proposition~\ref{prop: classical ODEs} we need a $p$-adapted bound.

\begin{prop}[Off-diagonal estimates on temporal supports]\label{prop: time supports}
Let $p \in (1,2]$ such that the families $(\mathcal{E}_\lambda)_{\lambda>0}$ and $(\lambda D_t^{1/2}\mathcal{E}_\lambda)_{\lambda>0}$ are $\L^p$ bounded. For every $N>1$, there exists a constant $C$ such that the off-diagonal estimate
\begin{equation*}
    \|\one_{\R^n\times F} \, \lambda D_t^{1/2}\mathcal{E}_\lambda  \left (  \one_{\R^n\times E} u \right ) \|_{p}
        \leq C\biggl(\frac{\lambda}{r} + \left(\frac{\lambda}{r}\right)^2\biggr)
        N^{-j\varepsilon}
        \|\one_{\R^n\times E} u\|_{p}
\end{equation*}
holds for $\lambda, r > 0$, $j \ge 1$ and $u \in \L^p(\R^{n+1})\cap \L^2(\R^{n+1})$ in the following two scenarios:
\begin{enumerate}[(i)]
    \item\label{item: cube to annulus time} \emph{(From the cube to the annulus)} For $E= I_r$, $F= N^{j+1}I_{r} \setminus N^{j} I_{r}$ with $\varepsilon = 1+\frac{1}{1+p'}$.
    
    \item\label{item: annulus to cube time} \emph{(From the annulus to the cube)} 
    For $E= N^{j+1}I_r \setminus N^j I_r$, $F= I_r$ with $\varepsilon = 2$.
\end{enumerate}
\end{prop}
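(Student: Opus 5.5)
The plan is to use two structural facts. The first is \emph{causality}: $\mathcal{E}_\lambda=(1+\lambda^2\H)^{-1}$ is the resolvent of a forward parabolic operator. The second is the explicit kernel representation \eqref{eq: Dt representation} of $D_t^{1/2}$, which is what makes the estimate non-local in time. I would work with $w\coloneqq\mathcal{E}_\lambda(\one_{\R^n\times E}u)$ and write $\lambda D_t^{1/2}w$ on $\R^n\times F$ as a sum of two pieces. The first is a \emph{local} piece $\lambda D_t^{1/2}(\chi w)$, where $\chi=\chi(t)$ is a smooth temporal cutoff equal to $1$ on a neighbourhood of $F$ of width $\delta$. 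The second is a \emph{tail} piece, which by \eqref{eq: Dt representation} (since $(1-\chi)w$ vanishes near $t\in F$) equals $-c\lambda\int (1-\chi(s))w(x,s)\,|t-s|^{-3/2}\dd s$. The width $\delta$ (comparable to some power of $N^j r^2$) is a free parameter, to be optimized at the end.

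\textbf{Step 1 (temporal decay of the resolvent).} First establish the causal decay statement. If $f$ is supported in $\R^n\times(-\infty,T]$, then $\mathcal{E}_\lambda f=0$ is not true, but $\mathcal{E}_\lambda f$ satisfies, for $t>T$,
\begin{equation*}
\norm{\one_{\R^n\times(T+a,\infty)}\mathcal{E}_\lambda f}_p\le C\e^{-ca/\lambda^2}\norm{f}_p,
\end{equation*}
and symmetrically $\mathcal{E}_\lambda f$ decays below the support of $f$ when we use $\H^\star$. I would prove this either via a Davies-type perturbation with a weight $\e^{\beta t/\lambda^2}$ in time, or via the Laplace representation of the resolvent through the semigroup. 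The weight changes $\H$ only by a bounded zero-order term, so this is compatible with the hypothesis of $\L^p$ boundedness of $(\mathcal{E}_\lambda)$. Indeed, the conjugated family is again a resolvent family of an operator in the same class up to a small shift, so the $\L^p$ bounds persist by a Neumann series for $\beta$ small.

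\textbf{Step 2 (local piece).} For the local piece I would write $\lambda D_t^{1/2}(\chi w)=\lambda D_t^{1/2}\mathcal{E}_\lambda(\chi\,\cdot)$ plus a commutator. The commutator is controlled by $[\chi,\mathcal{E}_\lambda]=\lambda^2\mathcal{E}_\lambda(\partial_t\chi)\mathcal{E}_\lambda$. This yields terms with a factor $\lambda^2\norm{\partial_t\chi}_\infty\lesssim\lambda^2/\delta$, giving the $(\lambda/r)^2$ contribution. These terms are then multiplied by the exponential decay from Step 1, since $w$ is evaluated at temporal distance at least $\sim N^jr^2-\delta$ from $E$. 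The assumed $\L^p$ boundedness of $(\lambda D_t^{1/2}\mathcal{E}_\lambda)$ handles the operator in front. In the \emph{annulus to cube} case (ii), the portion of $E$ lying after $F$ contributes nothing locally by causality. Only the portion before $F$ matters, and it enjoys the same exponential decay.

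\textbf{Step 3 (tail piece).} This is the step I expect to be the main obstacle, since it is where the limited polynomial decay $|t-s|^{-3/2}$ lives and where the exponents $\varepsilon$ arise. For $t\in F$, I would split the $s$-integral into $s$ near $E$, where $w$ is $\L^p$-large but $|t-s|\gtrsim N^jr^2$, and $s$ in the intermediate region, where $w$ is exponentially small by Step 1. In the near part I would apply Hölder in $s$ and then integrate in $t$ over $F$. The measures $|E|\sim r^2$ and $|F|\sim N^{j+1}r^2$ enter through $|E|^{1/p'}|F|^{1/p}$, as appropriate to each scenario. Together with $\lambda(N^jr^2)^{-3/2}$ this produces $\tfrac{\lambda}{r}N^{-j\varepsilon}$. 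For (i), I expect the optimization of $\delta$ between the Hölder loss on the stretched interval and the exponential tail to produce exactly $\varepsilon=1+\tfrac1{1+p'}$. For (ii), the roles of $|E|$ and $|F|$ are swapped, and one should get the better exponent $\varepsilon=2$. I would check this by dualizing, using the time reversal that exchanges $\H$ and $\H^\star$, and using the fact that $F=I_r$ is now the small set. If the naive Hölder count falls short, the fallback is to use the $\L^p$ boundedness of $(\mathcal{E}_\lambda)$ together with Step 1 to bound $\norm{w}_{\L^p(\R^n\times J)}$ on dyadic-in-time shells $J$. I would then sum the resulting geometric series against the kernel.
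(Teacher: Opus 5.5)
Your route is correct in outline, but it differs from the paper's in a real way.

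\textbf{What the paper does.} The paper uses no decay of the resolvent in time at all. It dualizes and places the cutoff $\eta$ around the \emph{source} set $E$. Transposed back to your picture, this reads $\one_F\lambda D_t^{1/2}w=\one_F\lambda D_t^{1/2}(\eta w)-\lambda^3\one_F D_t^{1/2}\E_\lambda(\partial_t\eta)\E_\lambda(\one_E u)$. The piece fed into the kernel of $D_t^{1/2}$ is therefore automatically supported in $\widetilde E=\supp\eta$, and only the $\L^p$ bounds on $\E_\lambda$ and $\lambda D_t^{1/2}\E_\lambda$ are used. The price is that, as $\widetilde E=N^{\gamma j}I_r$ grows, the commutator term $\lambda^2\norm{\partial_t\eta}_\infty$ shrinks while the kernel term $\lambda\abs{\widetilde E}^{1/p'}\abs{F}^{1/p}\mathrm{dist}(\widetilde E,F)^{-3/2}$ grows. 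Balancing the two is exactly what produces $\varepsilon=1+\frac{1}{1+p'}$.

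\textbf{What your route does differently.} You put the cutoff around the \emph{target} $F$, so $1-\chi$ is not compactly supported and you need Step~1 to localize $w$. That lemma does hold at the exponent $p$ under the stated hypothesis. Use bounded weights $e^{\beta\theta(t)/\lambda^2}$ with $\theta$ nondecreasing and $\theta'\le 1$. The conjugated resolvent is $(1-\beta\theta'+\lambda^2\H)^{-1}=\sum_k(\beta\E_\lambda\theta')^k\E_\lambda$, which converges in $\L^p$ for $\beta$ small; then pass to the limit by monotone convergence. Exact causality follows from the opposite weight: $(1+\beta\theta'+\lambda^2\H)^{-1}$ is an $\L^2$ contraction for every $\beta>0$, and you let $\beta\to\infty$. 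In your scheme, exponential decay and not just causality is genuinely needed in (i): with causality alone, the tail on the part of $F$ lying before $E$ gives only $N^{-j}$.

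\textbf{What your route buys.} You can simply take $\delta\simeq N^{2j}r^2$. The commutator then contributes $(\lambda/r)^2N^{-2j}$. Hölder against the weighted $\L^p$ bound, over a window of length $\simeq r^2+\lambda^2$ after $E$, gives $\frac{\lambda}{r}(1+\frac{\lambda}{r})^{2/p'}N^{-j(1+2/p')}\lesssim(\frac{\lambda}{r}+(\frac{\lambda}{r})^2)N^{-j(1+2/p')}$. So in (i) you obtain $\varepsilon=1+\frac{2}{p'}$. This is stronger than required and consistent with the heat-kernel model. In (ii) no temporal decay is needed, and you get $\varepsilon=2$. The paper's route is shorter and hypothesis-minimal, and its weaker exponent is all that is used later.

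\textbf{Two corrections to your sketch.}
\begin{enumerate}[(1)]
\item Your expectation that optimizing $\delta$ reproduces exactly $1+\frac{1}{1+p'}$ is misplaced. That number is an artifact of the paper's cutoff-around-the-source scheme; in your scheme there is nothing to balance.
\item Fix the scaling. $N^jI_r=I_{N^jr}$ has length $2N^{2j}r^2$, so in (i) the separation is $\gtrsim N^{2j}r^2$ and $\abs{F}\simeq N^{2j}r^2$. With your values $N^jr^2$ and $N^{j+1}r^2$, the Hölder count yields only $N^{-j(3/2-1/p)}$, which is insufficient.
\end{enumerate}
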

\begin{proof}
    In both scenarios we shall prove the dual estimate in $\L^{p'}$. We write $[A,B] = AB - BA$ for the commutator of suitable operators. Let $u \in \EE$.
    Our main observation is that for $\eta \in \Cont^\infty_\mathrm{b}(\R)$, which we identify with a multiplication operator in the $t$-variable only and $u\in \mathrm{E}$, we have that
    \begin{align*}
        [\H^\star, \eta]u = [-\partial_t - \Div_x(A^\star \nabla_x), \eta]u = - (\partial_t \eta) u
    \end{align*}
    is a `local' operator. This identity is easily checked from the variational definition of $\H^\star$ if $u \in \Cont_\c^\infty(\R^{n+1})$ and extends to $\EE$ by density. Thus, for $\lambda >0$ we can use the commutator identity $[A,B]=B[B^{-1},A]B$ to find that
    \begin{align*}
        \eta \lambda \E^\star_\lambda D_t^{1/2}u&=\lambda [\eta, \E_\lambda^\star]D_t^{1/2}u+\lambda \E^\star_\lambda \eta D_t^{1/2}u
        \\&= \lambda \E^\star_\lambda [1+\lambda^2 \H^\star, \eta]\E_\lambda^\star D_t^{1/2}u+\lambda \E^\star_\lambda \eta D_t^{1/2}u
        \\&= -\lambda^3 \E^\star_\lambda (\partial_t \eta) \E^\star_\lambda D_t^{1/2}u+\lambda \E^\star_\lambda \eta D_t^{1/2}u
        \\&\eqqcolon \mathrm{I} + \mathrm{II}.
    \end{align*}
    By assumption and duality, the families $(\E^\star_\lambda)_{\lambda>0}$ and $(\lambda \E^\star_\lambda D_t^{1/2})_{\lambda>0}$ are $\L^{p'}$ bounded. Now, we let $E,F \subseteq \R$ be as in either of the two cases \ref{item: cube to annulus time}, \ref{item: annulus to cube time} and we let $u \in \Cont_\c^\infty(\R^{n+1})$ have its support in $\R^n \times F$. Note that such $u$ are dense in $\L^{p'}(\R^n \times F)$ by the specific form of $F$.
    
    We take $\eta \in \Cont_\mathrm{b}^\infty(\R)$ with $0\leq \eta\leq1$, $\eta=1$ on $E$, $\eta=0$ on $F$ and abbreviate $\widetilde{E}\coloneqq \supp(\eta)$. By the previous computations, we have
    \begin{align}\label{eq: I+II}
        \norm{\eta \lambda \E^\star_\lambda D_t^{1/2} u}_{p'} \le \norm{\mathrm{I}}_{p'}+\norm{\mathrm{II}}_{p'}.
    \end{align}
    Estimating $\norm{\mathrm{I}}_{p'}$ is straightforward: 
    \begin{align}\label{eq: I}
        \norm{\mathrm{I}}_{p'} \leq C  \lambda^2 \norm{\partial_t \eta}_{\infty} \norm{u}_{p'},
    \end{align}
    where $C>0$ depends on the $\L^p$ bound of $\E_\lambda$.
    
    For $\norm{\mathrm{II}}_{p'}$, we use the representation formula \eqref{eq: Dt representation} for $D^{1/2}_t u$ and that $u$ and $\eta$ have disjoint supports in order to obtain 
    \begin{align*}
        \abs{\eta D_t^{1/2}u}(x,t) \leq \abs{\eta(t)} \int_\R \frac{\abs{u(x,t) - u(x,s)}}{\abs{t-s}^{\nicefrac{3}{2}}} \dd s \leq \abs{\eta(t)} \int_\R \frac{\abs{u(x,s)}}{\abs{\d(\widetilde{E},F)}^{\nicefrac{3}{2}}} \dd s.
    \end{align*}
    Thus,
    \begin{align*}
        \norm{\mathrm{II}}_{p'} 
        \leq C \lambda \|\eta D_t^{1/2}u\|_{p'} 
        \leq \lambda \Biggl(\int_{\R^n} \int_\R \abs{\eta(t)}^{p'} \biggl(\int_\R \frac{\abs{u(x,s)}}{\abs{\d(\widetilde{E},F)}^{\nicefrac{3}{2}}} \dd s\biggr)^{p'} \dd t \dd x \Biggr)^{\nicefrac{1}{p'}}
    \end{align*}
    and Jensen's inequality gives
    \begin{align}
    \label{eq: time ODEs support dependency}
    \begin{split}
    \norm{\mathrm{II}}_{p'} 
        &\leq \frac{\lambda}{\abs{\d(\widetilde{E},F)}^{\nicefrac{3}{2}}} \Bigl(\int_\R \abs{\eta(t)}^{p'} \dd t\Bigr)^{\nicefrac{1}{p'}}  \Bigl(\int_{\R^n} \abs{F}^{p'-1}\int_\R \abs{u(x,s)}^{p'} \dd s \dd x\Bigr)^{\nicefrac{1}{p'}}\\
        &\leq \frac{\lambda \abs{\widetilde{E}}^{\nicefrac{1}{p'}} \abs{F}^{1-\nicefrac{1}{p'}}}{\d(\widetilde{E},F)^{\nicefrac{3}{2}}} \norm{u}_{p'}.
    \end{split}
    \end{align}
    All we have to do now is to plug in appropriate estimates of the occurring quantities.
    
    \noindent\emph{\ref{item: cube to annulus time} From the cube to the annulus}
    
    In this case we have $F = N^{j+1}I_r\setminus N^j I_r$ and we pick $\eta$ such that $\supp(\eta) = \widetilde{E} = N^{\gamma j} I_r\supseteq I_r = E$ for some $\gamma\in(0,1)$ that is yet to be determined. We have
    \begin{align*}
        \abs{F} \leq C N^{2j}r^2, \quad \abs{\widetilde{E}} \leq C N^{2\gamma j}r^2, \quad \d(\widetilde{E},F) \geq C N^{2j}r^2,
    \end{align*}
    and we can arrange that
    \begin{align*}
        \norm{\partial_t \eta}_\infty \leq C (\d(\widetilde{E}^\mathrm{c}, E))^{-1} \leq C N^{-2\gamma j}r^{-2},
    \end{align*}
    where $C$ now depends on $N$ and $\gamma$ as well. Plugged into \eqref{eq: I+II} - \eqref{eq: time ODEs support dependency}, this gives
    \begin{align*}
        \|\one_{\R^n\times E} \, \lambda D_t^{1/2}\mathcal{E}_\lambda^\star  u \|_{p'} &\leq C \Bigl(\frac{\lambda}{r} N^{-j(1 + \frac{2}{p'}-\frac{2\gamma}{p'})} + \frac{\lambda^2}{r^2} N^{-2\gamma j}\Bigr) \norm{u}_{p'}.
    \end{align*}
    Optimizing the estimate by equalizing the exponents of $N$ yields the optimal choice 
    \begin{align}\label{eq: epsilon to optimize}
        \gamma = \frac{3p-2}{4p-2} = \frac{1}{2}\Bigl(1+\frac{1}{1+p'}\Bigr)
    \end{align}
    and this leads to the desired decay estimate
    \begin{align*}
        \|\one_{\R^n\times E} \, \lambda D_t^{1/2}\mathcal{E}_\lambda^\star   u \|_{p'}  &\leq C \Bigl(\frac{\lambda}{r} + \frac{\lambda^2}{r^2}\Bigr) N^{-j\bigl(1+\frac{1}{1+p'}\bigr)} \norm{u}_{p'}.
    \end{align*}

    \noindent\emph{\ref{item: annulus to cube time} From the annulus to the cube}
    
    In this case we have $F = I_r$ and we pick $\eta$ such that $\supp(\eta) = \widetilde{E} = N^{j+2}I_r \setminus N^{j-\nicefrac{1}{2}} I_r \supseteq N^{j+1}I_r \setminus N^{j} I_r = E$. We have
    \begin{align*}
        \abs{F} \leq C r^2,\quad \abs{\widetilde{E}} \leq C N^{2 j}r^2,\quad \d(\widetilde{E},F) \geq C N^{2j}r^2
    \end{align*}
    and we can arrange that
    \begin{align*}
        \norm{\partial_t \eta}_\infty &\leq C(\d(\widetilde{E}^\mathrm{c}, E))^{-1} \leq C N^{-2 j}r^{-2}.
    \end{align*}
    This gives the desired decay estimate
    \begin{align*}
        \|\one_{\R^n\times E} \, \lambda D_t^{1/2}\mathcal{E}_\lambda^\star   u  \|_{p'} &\leq C \Bigl(\frac{\lambda}{r} N^{-j(3 - \frac{2}{p'})} + \frac{\lambda^2}{r^2} N^{-2 j}\Bigr) \norm{u}_{p'}\\
        &\leq C \Bigl(\frac{\lambda}{r} + \frac{\lambda^2}{r^2}\Bigr) N^{-2j} \norm{u}_{p'}. \qedhere
    \end{align*}
\end{proof}

\begin{rem}\label{rem: epsilon}
    Proposition \ref{prop: time supports} \ref{item: cube to annulus time} admits some flexibility when going from the cube to the annulus. We can also allow for $F=N^{j+1} I_r\setminus N^j I_r$ and a stretched interval $E=N^{\nicefrac{j}{2}} I_r$, that is
    \begin{align*}
        \|\one_{\R^n \times (N^{j+1}I_r \setminus N^j I_r)}\, \lambda D_t^{1/2}\mathcal{E}_\lambda  ( \one_{\R^n \times N^{\nicefrac{j}{2}}I_r}  u) \|_{p} \leq C\left(\frac{\lambda}{r} + \biggl(\frac{\lambda}{r}\right)^2\biggr)N^{-j\bigl(1+\frac{1}{1+p'}\bigr)} \|\one_{\R^n \times N^{\nicefrac{j}{2}}I_r} u\|_{p}
    \end{align*}
    holds for all $u \in \L^p(\mathbb{R}^{n+1})\cap \L^2(\mathbb{R}^{n+1})$.\\
    Indeed, in the proof of Proposition \ref{prop: time supports} \ref{item: cube to annulus time}, we see that $\supp(\eta)\supseteq E = N^{\nicefrac{j}{2}} I_r$ requires $\gamma>\nicefrac{1}{2}$, 
    which is the case for the choice made in \eqref{eq: epsilon to optimize}; the remainder of the proof yields the same four estimates for $\abs{F},\abs{\widetilde{E}}, \d(\widetilde{E},F)$ and $\norm{\partial_t \eta}_\infty$.
\end{rem}

\subsection{Full gradient off-diagonal estimates and composition} 

We are now ready to assemble full space-time off-diagonal bounds. 

\begin{thm}\label{thm: parabolicODEs}
Let $\varrho \in [1,2]$ such that the families $(\mathcal{E}_\lambda)_{\lambda>0}$ and $(\lambda \mathbb{D}\mathcal{E}_\lambda)_{\lambda>0}$ are $\L^\varrho$ bounded. Let $p\in(\varrho,2]$ or $p=\varrho=2$. Fix $N\ge 4$ and an integer $m \ge 1$. Then there exists a constant $C$ such that the off-diagonal estimate
\begin{align*}
    \|\one_F \lambda \D\mathcal{E}^m_\lambda \one_E u \|_{p}
        \leq C\biggl(\frac{\lambda}{r} +\Bigl(\frac{\lambda}{r}\Bigr)^{4N}\biggr)
        N^{-j \varepsilon}
        \|\one_E u\|_{p}
\end{align*}
holds for $\lambda, r >0$, $j \geq 2$ and $u \in \L^p(\R^{n+1}) \cap \L^2(\R^{n+1})$ in the following two scenarios:
\begin{enumerate}[(i)]
    \item\label{item: cube to annulus} \emph{(From the cube to the annulus)} For $E= \Delta_r$, $F= C_j^N(\Delta_r)$ with $\varepsilon = 1+\frac{1}{1+p'}$.
    
    \item\label{item: annulus to cube} \emph{(From the annulus to the cube)} 
    For $E= C_j^N(\Delta_r)$, $F= \Delta_r$ with $\varepsilon = 2$.
\end{enumerate}
\end{thm}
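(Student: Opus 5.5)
The plan is to split the annulus $C_j^N(\Delta_r)$ into a part that is far from $\Delta_r$ in space and a part that is far in time. We treat each part with the matching one-directional estimate, and convert every exponential decay into the polynomial form $C(\lambda/r+(\lambda/r)^{4N})N^{-j\varepsilon}$.

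\textbf{Step 1: $L^p$ versions of the $L^2$ estimates.} Since $p\in(\varrho,2)$, Lemma~\ref{lem: Toolbox}\ref{item: Interpolation principle} applies. It interpolates the $\L^\varrho$ boundedness of $(\E_\lambda)$ and $(\lambda\D\E_\lambda)$ with the $\L^2$ off-diagonal estimates of Proposition~\ref{prop: classical ODEs} and Proposition~\ref{prop: L2 spatial supports}. This gives, in $\L^p$:
\begin{itemize}
\item exponential off-diagonal bounds in the parabolic distance for $\E_\lambda$ and $\lambda\nabla_x\E_\lambda$;
\item exponential off-diagonal bounds in the spatial distance for $\lambda D_t^{1/2}\E_\lambda$ between sets of the form $E'\times\R$ and $F'\times\R$.
\end{itemize}
When $p=\varrho=2$ these are just the $\L^2$ statements. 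By Lemma~\ref{lem: Toolbox}\ref{item: Interpolation principle 1}, all families are also $\L^p$ bounded. Hence Proposition~\ref{prop: time supports} and Remark~\ref{rem: epsilon} hold at exponent $p$.

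\textbf{Step 2: the case $m=1$.} Consider scenario~\ref{item: cube to annulus}. We bound
\[
\one_{C_j^N(\Delta_r)}\le \one_{(2^{j+1}Q_r\setminus 2^jQ_r)\times\R}+\one_{\R^n\times(N^{j+1}I_r\setminus N^jI_r)}.
\]
\begin{itemize}
\item \emph{Spatial piece.} Here the spatial distance to $E=\Delta_r$ is $\gtrsim 2^jr$. Both components of $\lambda\D\E_\lambda$ then decay like $\e^{-c2^jr/\lambda}$.
\item \emph{Temporal piece.} For $\lambda\nabla_x\E_\lambda$, the parabolic distance is $\gtrsim N^jr$, which again gives exponential decay. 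For $\lambda D_t^{1/2}\E_\lambda$, we drop the spatial localization and apply Proposition~\ref{prop: time supports}\ref{item: cube to annulus time}. This yields exactly $C(\lambda/r+(\lambda/r)^2)N^{-j(1+1/(1+p'))}$.
\end{itemize}
The exponential terms must be converted to the claimed form.
\begin{itemize}
\item If $\lambda\le r$, then $\e^{-c2^jr/\lambda}\le C\,\tfrac{\lambda}{r}\,N^{-2j}$, because an exponential in $2^j r/\lambda$ dominates both $r/\lambda$ and $N^{2j}$.
\item If $\lambda>r$, then $\e^{-c2^jr/\lambda}\le C(\lambda/(2^jr))^{4N}=C(\lambda/r)^{4N}2^{-4Nj}$. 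Moreover $2^{-4Nj}\le N^{-2j}$, since $2^{4N}\ge N^2$ for $N\ge4$. This is where the power $4N$ comes from.
\end{itemize}
In both cases the bound is below $N^{-j\varepsilon}$ with $\varepsilon\le2$. Also $(\lambda/r)^2\le \lambda/r+(\lambda/r)^{4N}$. Scenario~\ref{item: annulus to cube} is identical: we split $E$ in the same way and use Proposition~\ref{prop: time supports}\ref{item: annulus to cube time}, which gives $\varepsilon=2$.

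\textbf{Step 3: general $m$, by induction.} Write $\lambda\D\E_\lambda^m=\lambda\D\E_\lambda\,\E_\lambda^{m-1}$. In scenario~\ref{item: cube to annulus}, insert $1=\one_G+\one_{G^c}$ between the two factors, with $G=2^{j/2}Q_r\times N^{j/2}I_r$.
\begin{itemize}
\item \emph{Term with $\one_{G^c}$.} Use the $\L^p$ boundedness of $\lambda\D\E_\lambda$. The parabolic off-diagonal bound for $\E_\lambda^{m-1}$ (composition of Step~1 bounds) from $\Delta_r$ to $G^c$ gives decay $\e^{-c2^{j/2}r/\lambda}$. This is converted as in Step~2, possibly with a worse constant.
\item \emph{Term with $\one_G$.} Use $\L^p$ boundedness of $\E_\lambda^{m-1}$. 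Then apply the $m=1$ estimate from the enlarged set $G$ to $C_j^N(\Delta_r)$. Temporally this is allowed by Remark~\ref{rem: epsilon}. Spatially, the distance is still $\gtrsim 2^jr$.
\end{itemize}
Scenario~\ref{item: annulus to cube} is handled analogously, with a thickened annulus as intermediate set, or by duality.

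\textbf{Main obstacle.} I expect the hardest step to be Step~3. The intermediate set $G$ must be large enough that $\E_\lambda^{m-1}$ does not leak out of it (up to exponentially small errors). At the same time, it must stay inside the stretched interval permitted by Remark~\ref{rem: epsilon}, so that the $D_t^{1/2}$ part keeps the full exponent $1+1/(1+p')$. Only the exponential decays are flexible enough to absorb these enlargements.
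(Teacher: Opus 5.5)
Your proposal is correct and is essentially the paper's proof. Both split $C_j^N(\Delta_r)$ into a spatially separated part, handled by interpolating Proposition~\ref{prop: L2 spatial supports} with the $\L^\varrho$ bounds, and a temporal band, handled by Proposition~\ref{prop: time supports} and Remark~\ref{rem: epsilon}. Both absorb $\E_\lambda^{m-1}$ through an auxiliary set of temporal size $N^{j/2}I_r$ (the paper uses $2^{j-1}Q_r\times N^{j/2}I_r$ where you use $2^{j/2}Q_r\times N^{j/2}I_r$), and both convert via $\e^{-c2^jr/\lambda}\le C(\lambda/r)^{4N}N^{-2j}$.

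For (ii), the thickened annulus must gain only a bounded number of temporal generations, as in the paper's $(2^{j+2}Q_r\times N^{j+2}I_r)\setminus(2^{j-1}Q_r\times N^{j-1}I_r)$, so that summing Proposition~\ref{prop: time supports}~(ii) over the bands still yields $N^{-2j}$. Your alternative ``by duality'' does not reduce (ii) to (i), since the adjoint family $(\E^\star_\lambda)^m\lambda\D^\star$ is not covered by scenario~(i).
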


\begin{proof}
The family $(\lambda \nabla_x \mathcal{E}^m_\lambda)_{\lambda>0}$ satisfies $\L^2$ off-diagonal estimates by composition (Proposition~\ref{prop: classical ODEs} and Lemma \ref{lem: Toolbox} \ref{item: Composition}). This exponential decay is strong enough to conclude \ref{item: cube to annulus} and \ref{item: annulus to cube} for $(\lambda \nabla_x \mathcal{E}^m_\lambda)_{\lambda>0}$ simply by interpolation and we can take $\varepsilon = 2$ in either case. Indeed, by interpolation with $\L^\varrho$ boundedness, $(\lambda \nabla_x \mathcal{E}^m_\lambda)_{\lambda>0}$ satisfies $\L^p$ off-diagonal estimates. Since for $j\ge 2$ and $N \geq 2$ we have
\begin{equation}\label{eq: distance p}
    \d(C^N_j(\Delta_r), \Delta_r) \ge \min\left(2^jr-r,\sqrt{(N^jr)^2-r^2}\right) \ge 2^{j-1} r,
\end{equation}
these off-diagonal bounds mean that
\begin{align*}
    \|\one_F \lambda \nabla_x \mathcal{E}^m_\lambda \one_E u \|_{p}
        \leq C \e^{-c \frac{2^j r}{\lambda}}
        \|\one_E u\|_{p}
\end{align*}
with certain constants $C$, $c$. However, for some constant $C$ depending on $N$ and $c$ we have
\begin{equation}\label{eq: exp vs N}
    \e^{-c \frac{2^j r}{\lambda}} 
    \le C \left( \frac{2^j r}{\lambda} \right)^{-4N} 
    = C \left( \frac{\lambda}{r} \right)^{4N} (2^{2N})^{-2j}
    \le C \left( \frac{\lambda}{r} \right)^{4N} N^{-2j}.
\end{equation}

We are then left to prove \ref{item: cube to annulus} and \ref{item: annulus to cube} for $(\lambda D_t^{1/2} \mathcal{E}^m_\lambda)_{\lambda>0}$. In what follows, let $u\in\L^p(\R^{n+1}) \cap \L^2(\R^{n+1})$.

\noindent\textit{\ref{item: cube to annulus} From the cube to the annulus}

We write 
\begin{align*}
    C^N_j(\Delta_r) = C^{N,t}_j(\Delta_r) \cup C^{N,x}_j(\Delta_r),
\end{align*}
with
\begin{align*}
    C^{N,t}_j(\Delta_r) \coloneqq 2^{j+1} Q_r \times (N^{j+1} I_r \setminus N^j I_r) \quad \text{and} \quad 
    C^{N,x}_j(\Delta_r) \coloneqq (2^{j+1} Q_r \setminus 2^j Q_r) \times N^j I_r,
\end{align*}
as indicated in Figure~\ref{fig: time and space supports}.
\begin{figure}[ht]
    \definecolor{gold}{HTML}{F1A226}
    \colorlet{color1}{violet}
    \colorlet{color2}{gold}
    \begin{tikzpicture}[scale=0.7]
        \pgfdeclarelayer{layer1}
        \pgfdeclarelayer{layer2}
        \pgfdeclarelayer{layer3}
        \pgfdeclarelayer{layer4}
        \pgfdeclarelayer{layer5}
        \pgfsetlayers{layer1,layer2,layer3,layer4,layer5}
        
        \tikzmath{\Ballx=2; \Ballt=2; \InnerAnnulusx=3; \InnerAnnulust=4; \OuterAnnulusx=5; \OuterAnnulust=6;}
        
        \begin{pgfonlayer}{layer1}
            \filldraw[very thick, pattern={Lines[angle=45,distance=5pt]}, pattern color=color1]
            (-\OuterAnnulusx,-\OuterAnnulust)
            rectangle (\OuterAnnulusx,\OuterAnnulust);
        \end{pgfonlayer}
        
        \begin{pgfonlayer}{layer2}
            \fill[fill=white]
            ({-\OuterAnnulusx+0.02},-\InnerAnnulust)
            rectangle (\OuterAnnulusx-0.02,\InnerAnnulust);
        \end{pgfonlayer}
        
        \begin{pgfonlayer}{layer3}
            \fill[pattern={Lines[angle=-45,distance=5pt]}, pattern color=color2]
            (-\OuterAnnulusx,-\InnerAnnulust)
            rectangle (\OuterAnnulusx,\InnerAnnulust);
        \end{pgfonlayer}
        
        \begin{pgfonlayer}{layer4}
            \filldraw[very thick, fill=white]
            (-\InnerAnnulusx,-\InnerAnnulust)
            rectangle (\InnerAnnulusx,\InnerAnnulust);
        \end{pgfonlayer}
        
        \begin{pgfonlayer}{layer5}
            \filldraw[very thick, fill=lightgray]
            (-\Ballx,-\Ballt)
            rectangle (\Ballx,\Ballt);

            \draw[black, very thick, ->] ({-\OuterAnnulusx-0.5},0) -- ({\OuterAnnulusx+0.5},0) node[above] {$x$};
            \draw[black, very thick, ->] (0,{-\OuterAnnulust-0.5}) -- (0,{\OuterAnnulust+0.5}) node[left] {$t$};
            
            \path ({\OuterAnnulusx+1},\OuterAnnulust) node {$C_j^N(\Delta_r)$}
            (-0.5,1) node {$\Delta_r$};
        \end{pgfonlayer}
    \end{tikzpicture}
    \caption{The splitting of $C_j^N(\Delta_r)$ in the proof of Theorem~\ref{thm: parabolicODEs} with the spatial support $C_j^{N,x}(\Delta_r)$ (\textcolor{color2}{golden}) and the time support $C_j^{N,t}(\Delta_r)$ (\textcolor{color1}{violet})}
    \label{fig: time and space supports}
\end{figure}
In the spirit of reproducing the proof of composition of off-diagonal estimates, we introduce the auxiliary set
\begin{align*}
    \Delta^{j-1,\nicefrac{j}{2}}_r \coloneqq 2^{j-1}Q_r \times N^{\nicefrac{j}{2}}I_r.
\end{align*}
Similarly to \eqref{eq: distance p} we check that for $N\geq4$ we have
\begin{align}\label{eq: distance auxiliary set}
    \d(C_j^{N,x}(\Delta_r),\Delta^{j-1,\nicefrac{j}{2}}_r) \geq 2^{j-1} r\quad\text{and}\quad \d((\Delta^{j-1,\nicefrac{j}{2}}_r)^\c, \Delta_r) \geq 2^{j-3}r.
\end{align}
We now split accordingly
\begin{align*}
    \one_{C_j^N(\Delta_r)} \lambda D_t^{1/2} \E_\lambda^m \one_{\Delta_r} u 
    &= \Bigl( \one_{C_j^{N,x}(\Delta_r)} \lambda D_t^{1/2} \E_\lambda \one_{\Delta^{j-1,\nicefrac{j}{2}}_r} \Bigr) \E_\lambda^{m-1} \one_{\Delta_r} u \\
    &\qquad + \Bigl(\one_{C_j^{N,t}(\Delta_r)} \lambda D_t^{1/2} \E_\lambda \one_{\Delta^{j-1,\nicefrac{j}{2}}_r} \Bigr) \E_\lambda^{m-1} \one_{\Delta_r} u \\
    &\qquad + \one_{C_j^N(\Delta_r)} \lambda D_t^{1/2} \E_\lambda \Bigl( \one_{(\Delta^{j-1,\nicefrac{j}{2}}_r)^\mathrm{c}} \E_\lambda^{m-1} \one_{\Delta_r} u \Bigr)\\
    &\eqqcolon \mathrm{I} + \mathrm{II} + \mathrm{III}.
\end{align*}
The terms in brackets will admit decay in $\L^p$ norm, while the ones without will be estimated by uniform $\L^p$ boundedness. 

For  $\mathrm{I}$, we interpolate the exponential decay of $\lambda D_t^{1/2} \E_\lambda$ in the spatial direction on $\L^2$ (Proposition \ref{prop: L2 spatial supports}) with uniform boundedness on $\L^\varrho$ via the Riesz--Thorin theorem. This yields
\begin{align*}
    \norm{\mathrm{I}}_p \leq C  \e^{-c \frac{2^j r}{\lambda}}  \norm{\E_\lambda^{m-1} \one_{\Delta_r} u}_p \leq  C \left( \frac{\lambda}{r} \right)^{4N} N^{-2j} \norm{ \one_{\Delta_r} u}_p,
\end{align*}
where the second step is just \eqref{eq: exp vs N}.
Similarly, for $\mathrm{III}$ we interpolate the exponential decay for the resolvent family on $\L^2$ (Proposition \ref{prop: classical ODEs}) with  $\L^\varrho$ boundedness:
\begin{align*}
    \norm{\mathrm{III}}_p \leq C \norm{\one_{(\Delta^{j-1,\nicefrac{j}{2}}_r )^\c} \E_\lambda^{m-1} \one_{\Delta_r} u }_p \leq C \left( \frac{\lambda}{r} \right)^{4N} N^{-2j} \norm{ \one_{\Delta_r} u}_p.
\end{align*}
In both cases, $C$ now depends on the $\L^p$ bounds of $(\mathcal{E}_\lambda)_{\lambda>0}$ and $(\lambda \mathbb{D}\mathcal{E}_\lambda)_{\lambda>0}$, $m$ and $N$. Finally, for $\mathrm{II}$ we use the decay on temporal supports (Proposition \ref{prop: time supports} \ref{item: cube to annulus} or rather its more general version in Remark \ref{rem: epsilon}) and find
\begin{align*}
    \norm{\mathrm{II}}_p \leq C \biggl(\frac{\lambda}{r} +\left(\frac{\lambda}{r}\right)^{2}\biggr)N^{-j\bigl( 1+\frac{1}{1+p'}\bigr)} \norm{\E_\lambda^{m-1} \one_{\Delta_r} u }_p \leq C \left(\frac{\lambda}{r} +\bigg(\frac{\lambda}{r}\right)^{2}\bigg)N^{-j\bigl( 1+\frac{1}{1+p'} \bigr)} \norm{ \one_{\Delta_r} u }_p.
\end{align*}
The claim for this case follows by collecting the above estimates and using that $1+\frac{1}{1+p'} \le 2$ for $\mathrm{I}$ and $\mathrm{III}$ as well as $(\nicefrac{\lambda}{r})^2 \le \nicefrac{\lambda}{r} + (\nicefrac{\lambda}{r})^{4N}$ for $\mathrm{II}$.

\noindent\textit{\ref{item: annulus to cube time} From the annulus to the cube}

Our strategy is exactly the same up to introducing 
\begin{align*}
    \widetilde{C}^N_j(\Delta_r) \coloneqq \left( 2^{j+2}Q_r \times N^{j+2} I_r \right ) \setminus \left ( 2^{j-1}Q_r \times N^{j-1} I_r \right)
\end{align*}
with its corresponding splitting in the $x$- and $t$-direction
\begin{align*}
    \widetilde{C}^{N,t}_j(\Delta_r) \coloneqq 2^{j+2} Q_r \times (N^{j+2} I_r \setminus N^{j-1} I_r) \quad \text{and} \quad 
    \widetilde{C}^{N,x}_j(\Delta_r) \coloneqq (2^{j+2} Q_r \setminus 2^{j-1} Q_r) \times N^{j-1} I_r
\end{align*}
and then splitting
\begin{align*}
    \one_{\Delta_r} \lambda D_t^{1/2} \E_\lambda^m \one_{C_j^N(\Delta_r)} u
    &= \one_{\Delta_r} \lambda D_t^{1/2} \E_\lambda \Bigl( \one_{(\widetilde{C}^N_j(\Delta_r))^\mathrm{c}} \E_\lambda^{m-1} \one_{C_j^{N}(\Delta_r)} u \Bigr) \\
    &\qquad + \Bigl( \one_{\Delta_r} \lambda D_t^{1/2} \E_\lambda \one_{\widetilde{C}^{N,x}_j(\Delta_r)} \Bigl) \E_\lambda^{m-1} \one_{C_j^{N}(\Delta_r)} u \\
    &\qquad + \Bigl( \one_{\Delta_r} \lambda D_t^{1/2} \E_\lambda \one_{\widetilde{C}^{N,t}_j(\Delta_r)} \Bigr) \E_\lambda^{m-1} \one_{C_j^N(\Delta_r)} u \\
    &\eqqcolon \mathrm{I} + \mathrm{II} + \mathrm{III}.
\end{align*}
The terms $\mathrm{I}$ and $\mathrm{II}$ are treated as before: The interpolated version of the off-diagonal estimates of the resolvent family on $\L^2$ from Proposition \ref{prop: classical ODEs} takes care of $\mathrm{I}$, while the separation in $\mathrm{II}$ in spatial direction allows for the use of Proposition \ref{prop: L2 spatial supports}:
\begin{align*}
    \norm{\mathrm{I}}_p + \norm{\mathrm{II}}_p \leq C \left( \frac{\lambda}{r} \right)^{4N} N^{-2j} \norm{ \one_{C_j^N(\Delta_r)} u}_p.
\end{align*}
For $\mathrm{III}$, we observe that $\widetilde{C}^{N,t}_j(\Delta_r)$, compared to $C_j^{N,t}(\Delta_r)$, contains the three generations of temporal bands $j-1$, $j$, $j+1$. Using the decay of the gradient family in the time direction to the three bands (Proposition \ref{prop: time supports} \ref{item: annulus to cube time}), we obtain
\begin{align*}
    \norm{\mathrm{III}}_p \leq C \left(\frac{\lambda}{r} +\biggl(\frac{\lambda}{r}\right)^{2}\biggr)N^{-2j} \norm{\E_\lambda^{m-1} \one_{\Delta_r} u }_p \leq C \left(\frac{\lambda}{r} +\biggl(\frac{\lambda}{r}\right)^{2}\biggr)N^{-2j} \norm{ \one_{\Delta_r} u }_p.
\end{align*}
The claim again follows by collecting the above estimates and using that $(\nicefrac{\lambda}{r})^2 \le \nicefrac{\lambda}{r} + (\nicefrac{\lambda}{r})^{4N}$.
\end{proof}

\begin{rem}\label{rem: epsilon2}
We again have some flexibility when going from the cube to the annulus in Theorem~\ref{thm: parabolicODEs} \ref{item: cube to annulus}. In the notation of the last proof, we can replace $E = \Delta_r$ by $E = \Delta_r^{j-\nicefrac{3}{2},\nicefrac{j}{4}} = 2^{j-\nicefrac{3}{2}}Q_r \times N^{\nicefrac{j}{4}}I_r$ and obtain
\begin{align*}
    \|\one_{C^N_j(\Delta_r)}\, \lambda \mathbb{D}\mathcal{E}^m_\lambda  \bigl( \one_{\Delta^{j-\nicefrac{3}{2},\nicefrac{j}{4}}_r}  u \bigr) \|_{p} \leq\ C\biggl(\frac{\lambda}{r} +\biggl(\frac{\lambda}{r}\biggr)^{4N}\biggr)
    N^{-j\bigl(1+\frac{1}{1+p'}\bigr)}
    \|\one_{\Delta^{j-\nicefrac{3}{2},\nicefrac{j}{4}}_r} u\|_{p}.
\end{align*}
For this, we only need to check that \eqref{eq: distance auxiliary set} and \eqref{eq: distance p} still hold with the new choice of $E$. But a quick inspection again reveals that this is the case. 
\end{rem}

\begin{rem}
In \cite[Proposition 2.11]{AEN2020} the authors obtained a similar conclusion but with the decay parameter $\varepsilon > 0$ unspecified from Shneiberg’s stability theorem \cite{Shneiberg}; see also \cite[Appendix]{ABES2019}. The improvement here comes essentially from leveraging that $[\eta, \H^\star]$ in Proposition~\ref{prop: time supports} is a local multiplication operator.
\end{rem}

\section{Two boundedness criteria for operators with limited space-time decay}\label{S5}

Now that the nature of the decay in the off-diagonal estimates is known, we present two boundedness criteria from harmonic analysis adapted to this setting. On a first reading, we suggest that the reader skip the proofs and goes directly to the following sections where these results are applied. 

The first criterion relies on a weak type $(1,1)$ bound for the parabolic maximal function, whereas in contrast the second uses a domination by the iterated maximal function, which is not of weak type $(1,1)$.

\subsection{Extrapolation \`a la Blunck and Kunstmann}

The following is a two-scale version of Blunck and Kunstmann's criterion \cite{blunck2003calderon}; see also \cite{auscher2007necessary}. We write $\Z$ for the class of simple functions on $\R^{n+1}$ with support of finite measure.

\begin{thm}[Blunck--Kunstmann Extrapolation]\label{thm: BK}
	Let $1 \le p < q < \infty$ and $T$ be a sublinear operator of strong type $(q,q)$ on $\L^q(\R^{n+1})$. Let $N>1$ and $(\varepsilon_j)$ be a sequence in $(0,\infty)$ such that
	\begin{equation*}
		S\coloneqq \sum_{j=1}^{\infty} \varepsilon_j  ( 2^n N^2  )^{\nicefrac{j}{q'}}<\infty.
	\end{equation*}
	Assume that for each parabolic cube $\Delta$ and for all $u\in \Z$ supported in $\Delta$ there exists a decomposition $u=v+w$ with $v,w \in \L^q(\R^{n+1})$ satisfying
	\begin{align}
		\norm{\one_{C^N_j(\Delta)} T w}_q &\le \varepsilon_j r(\Delta)^{-\gamma_{p,q}} \| u \|_p	\qquad (j\geq 2) \label{eq: BK1}\\
		\intertext{and}
		\norm{\one_{C^N_j(\Delta)}v}_q &\le \varepsilon_j r(\Delta)^{-\gamma_{p,q}} \| u \|_p	\qquad (j\geq 1). \label{eq: BK2}
	\end{align}
	Then $T$ is of weak type $(p,p)$ on $\Z$ with a bound depending only on $S$, $N$, the dimension $n$ and the strong type $(q,q)$ bound of $T$. In particular, $T$ is of strong type $(s,s)$ on $\Z$ for all $s \in (p,q]$.
\end{thm}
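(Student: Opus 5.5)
We follow the Calder\'on--Zygmund/Blunck--Kunstmann scheme. Fix $u\in\Z$ and $\alpha>0$; we must show $|\{|Tu|>\alpha\}|\lesssim \alpha^{-p}\|u\|_p^p$.

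\emph{Decomposition.} Apply the Calder\'on--Zygmund decomposition to $|u|^p$ at height $\alpha^p$ with respect to parabolic dyadic cubes. Since parabolic cubes with $\d$ form a space of homogeneous type of dimension $d=n+2$ and the parabolic maximal function is of weak type $(1,1)$, this is available. We obtain disjoint parabolic cubes $\Delta_i$ with $\sum_i|\Delta_i|\lesssim\alpha^{-p}\|u\|_p^p$ and $\barint_{\Delta_i}|u|^p\lesssim\alpha^p$. Put $u_i=u\one_{\Delta_i}$ and $g=u\one_{(\bigcup\Delta_i)^\c}$, so that $|g|\le\alpha$ a.e. For each $i$, apply the hypothesis to $u_i$ on $\Delta_i$, which gives $u_i=v_i+w_i$. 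Write $u=g+\sum_i v_i+\sum_i w_i$.

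\emph{The good part.} Set $h=g+\sum_i v_i$. We show $\|h\|_q^q\lesssim\alpha^{q-p}\|u\|_p^p$; the strong type $(q,q)$ bound and Chebyshev then handle $\{|Th|>\alpha/3\}$. For $g$ this is immediate from $|g|\le\alpha$ and $\|g\|_p\le\|u\|_p$.

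For $\sum_i v_i$, argue by duality. Take $\phi\in\L^{q'}$ and split $\R^{n+1}=\bigcup_{j\ge1}C^N_j(\Delta_i)$ (up to the cube itself, which is contained in $C_1^N$). Using \eqref{eq: BK2},
\begin{align*}
\Bigl|\int v_i\phi\Bigr|\le\sum_j\varepsilon_j r_i^{-\gamma_{p,q}}\|u_i\|_p\|\one_{C_j^N(\Delta_i)}\phi\|_{q'}.
\end{align*}
Since $\|u_i\|_p\lesssim\alpha|\Delta_i|^{1/p}$ and $r_i^{-\gamma_{p,q}}|\Delta_i|^{1/p}\simeq|\Delta_i|^{1/q}$, we get
\begin{align*}
\|\one_{C_j^N(\Delta_i)}\phi\|_{q'}\le|2^{j+1}Q\times N^{j+1}I|^{1/q'}\inf_{\Delta_i}\M(|\phi|^{q'})^{1/q'},
\end{align*}
where $\M$ is the strong maximal function over axis-parallel rectangles in $x$ and $t$ separately. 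The volume ratio is $(2^nN^2)^{j}$ up to constants, which produces exactly the factor $(2^nN^2)^{j/q'}$ in $S$. Summing over $i$ gives
\begin{align*}
\Bigl|\int\sum_i v_i\phi\Bigr|\lesssim\alpha S\int_{\bigcup\Delta_i}\M(|\phi|^{q'})^{1/q'}.
\end{align*}
By Kolmogorov's inequality for the rectangular maximal function (bounded on $\L^s$, $s>1$) with $s=1/q'\cdot$ appropriate exponent, this is $\lesssim\alpha S|\bigcup\Delta_i|^{1/q}\|\phi\|_{q'}$. One can use the strong maximal function's $\L^{r}$ boundedness for $r>1$ applied to $|\phi|^{q'/r}$ with $1<r<q'$, to avoid weak $(1,1)$ issues. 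This yields the claim.

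\emph{The bad part.} Discard $\bigcup_i 4Q_i\times N^2I_i$, whose measure is $\lesssim N^2 2^{n}\sum|\Delta_i|$. Off this set,
\begin{align*}
|T\textstyle\sum w_i|\le\sum_i\sum_{j\ge2}\one_{C_j^N(\Delta_i)}|Tw_i|
\end{align*}
by sublinearity, which requires passing to the limit in the sum. Estimate the measure of the set where this exceeds $\alpha/3$ by Chebyshev in $\L^1$: use Hölder on each annulus together with \eqref{eq: BK1},
\begin{align*}
\|\one_{C_j}Tw_i\|_1\le|C_j|^{1/q'}\varepsilon_j r_i^{-\gamma_{p,q}}\|u_i\|_p\lesssim\varepsilon_j(2^nN^2)^{j/q'}\alpha|\Delta_i|.
\end{align*}
Summing gives $\lesssim S\alpha\sum|\Delta_i|$, which after division by $\alpha$ is the required bound. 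Strong type on $(p,q]$ then follows by Marcinkiewicz interpolation on $\Z$.

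\emph{Main obstacle.} The main obstacle is the good part: the annuli are not parabolic balls, so the parabolic maximal function does not control averages over $C_j^N$. We use the strong (product) maximal function with an exponent strictly above $1$, which is legitimate since $q'<\infty$ leaves room. Details of the justification of sublinearity for the infinite sum in the bad part also need care, for instance by truncating to finitely many cubes first.
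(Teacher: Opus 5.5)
There is a genuine gap in the good part, at the step where you bound $\int_{\bigcup\Delta_i}\M(|\phi|^{q'})^{1/q'}$. After H\"older on $C^N_j(\Delta_i)$ you need the full $\L^{q'}$-average of $\phi$ over $2^{j+1}Q_i\times N^{j+1}I_i$, and duality pairs against $\phi$ in exactly $\L^{q'}$. So Kolmogorov's inequality $\int_E \M(|\phi|^{q'})^{1/q'}\lesssim |E|^{1/q}\|\phi\|_{q'}$ needs $\M$ to be of weak type $(1,1)$; there is no room between the exponents. The strong maximal function, like the iterated $\M_t\M_x$, is not of weak type $(1,1)$, so this step fails.

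Your workaround does not repair it. It is true that $(\M|\phi|^{q'/r})^{r/q'}$ is bounded on $\L^{q'}$ for $1<r<q'$. But it only dominates $\L^{q'/r}$-averages of $\phi$, which are smaller than the $\L^{q'}$-averages produced by H\"older, so the inequality goes the wrong way. Your remark that ``$q'<\infty$ leaves room'' is exactly where the argument breaks. The bad part inherits the gap, because you need $\sum_i v_i$, and hence $\sum_i w_i$, to converge in $\L^q$ to use countable sublinearity.

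Otherwise your $\L^1$-Chebyshev treatment of the bad part is correct. H\"older on $C^N_j(\Delta_i)$ with $|C^N_j(\Delta_i)|\le(2^nN^2)^{j+1}|\Delta_i|$ gives $\int_{C^N_j(\Delta_i)}|Tw_i|\lesssim\varepsilon_j(2^nN^2)^{(j+1)/q'}\alpha|\Delta_i|$. This avoids maximal functions entirely there, a mild simplification over the paper's duality argument on $\R^{n+1}\setminus\Omega$. The discarded set has measure $4^nN^4\sum_i|\Delta_i|$ rather than $2^nN^2\sum_i|\Delta_i|$, which is harmless.

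The missing idea is to use a different maximal operator for each $j$ instead of one for all $j$. For fixed $j$, the rectangles $2^{j+1}Q\times N^{j+1}I$, with $Q\times I$ ranging over parabolic cubes, are exactly the balls of the rescaled metric $\d^{(j)}((x,t),(y,s))=\max\{2^{-(j+1)}|x-y|_\infty,\,N^{-(j+1)}|t-s|^{1/2}\}$. The associated maximal operator $\M_j$ is conjugate to the parabolic maximal function under the linear map $(x,t)\mapsto(2^{-(j+1)}x,\,N^{-2(j+1)}t)$. Hence it is of weak type $(1,1)$ with a constant independent of $j$.

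Use $\M_j$ on the $j$-th annulus, keeping the sum over $j$ outside. Kolmogorov then gives $\int_{\bigcup\Delta_i}\M_j(|\phi|^{q'})^{1/q'}\lesssim|\bigcup_i\Delta_i|^{1/q}\|\phi\|_{q'}$ uniformly in $j$. Summing against $\varepsilon_j(2^nN^2)^{(j+1)/q'}$ yields $\|\sum_i|v_i|\|_q^q\lesssim\alpha^{q-p}\|u\|_p^p$, which is precisely the paper's Step~1.
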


\begin{proof}
	Let $\alpha>0$ and $u\in \Z$. We employ the usual dyadic Calderón--Zygmund decomposition at height $\alpha$: There is a collection $\mathcal{D}$ of parabolic cubes $\Delta$ such that we can decompose $u=g + \sum_{\Delta \in \mathcal{D}}b_\Delta$ with $g,b_\Delta\in \Z$ and
	\begin{enumerate}[(i)]
		\item $\displaystyle \norm{g}_\infty \leq C \alpha$ and $g=\one_{(\cup_{\Delta \in \mathcal{D}} \Delta)^\mathrm{c}} u$,\label{eq: CZ-i}
		\item $\displaystyle \norm{b_\Delta}_p\leq C \alpha\abs{\Delta}^{1/p}$ and $b_\Delta = \one_\Delta u$, \label{eq: CZ-ii}
		\item $\displaystyle \sum_{\Delta \in \mathcal{D}} \abs{\Delta} \leq C \alpha^{-p} \norm{u}_p^p$, \label{eq: CZ-iii}
	\end{enumerate}
	where $C$ depends on $p$ and $n$. More precisely, $\mathcal{D}$ is the collection of maximal half-open dyadic cubes in $\{\M(|u|^p) > \alpha^p\}$, where $\M$ is the dyadic parabolic maximal operator.

    For the rest of the proof, $C$ may vary from line to line and we emphasize new parameter-dependencies.
	
	We put $\Omega\coloneqq \cup_{\Delta\in\mathcal{D}}\,C_1^N(\Delta)$. It suffices to show that 
	\begin{align}\label{eq: sufficient for weak bound BK}
		\norm{Tu}_{\L^q(\R^{n+1}\setminus\Omega)}^q \leq C \alpha^{q-p}\norm{u}_p^p
	\end{align}
	as \ref{eq: CZ-iii} in conjunction with Chebyshev's inequality implies
	\begin{align*}
		\abs{\{(x,t)\in\R^{n+1}: \abs{Tu(x,t)} > \alpha\}}  &\leq \abs{\Omega} + \alpha^{-q} \norm{Tu}_{\L^q(\R^{n+1}\setminus\Omega)}^q \\
		&\leq 4^n N^4 \sum_{\Delta \in \mathcal{D}} \abs{\Delta} + C \alpha^{-p}\norm{u}_p^p \\
		&\leq C \alpha^{-p} \norm{u}_p^p,
	\end{align*}
	where $C$ also depends on $N$. From here, the claim follows by Marcinkiewicz interpolation. The rest of the proof comes in two steps.
    
	\emph{Step 1: Decomposing $u$.}
    
    For each $\Delta\in\mathcal{D}$ we have the decomposition $b_\Delta = v_\Delta + w_\Delta$ from the assumption. We decompose
	\begin{align}\label{eq: BK decomposition of u}
		u = g + \sum_{\Delta \in \mathcal{D}}v_\Delta  + \sum_{\Delta \in \mathcal{D}} w_\Delta
	\end{align}
	and need to make sure that each term is well-defined and belongs to $\L^q(\R^{n+1})$. Since $u \in \Z$, this is clear for $g$. We estimate $\sum_{\Delta \in \mathcal{D}}v_\Delta$ in $\L^q(\R^{n+1})$ by duality. This estimate will also reveal that the sum converges absolutely pointwise almost everywhere. Let $f \in \L^{q'}(\R^{n+1})$ with $\norm{f}_{q'} \leq 1$. We have
	\begin{align}\label{eq: Step 2 duality}
    \begin{split}
		\Bigl\vert\iint_{\R^{n+1}} \Bigl(\sum_{\Delta \in \mathcal{D}}|v_\Delta|\Bigr) f\Bigr\vert
		&\leq \sum_{\Delta \in \mathcal{D}} \iint_{\R^{n+1}} \abs{v_\Delta f} = \sum_{\Delta \in \mathcal{D}} \sum_{j\geq 1} \iint_{C_j^N(\Delta)} \abs{v_\Delta f}\\
		&\leq \sum_{\Delta \in \mathcal{D}} \sum_{j\geq 1} \norm{v_\Delta}_{\L^q(C_j^N(\Delta))} \norm{f}_{\L^{q'}(C_j^N(\Delta))}.
    \end{split}
	\end{align}
	Using \eqref{eq: BK2} and then \ref{eq: CZ-ii}, we obtain
	\begin{align}\label{eq: BK v estimate}
		\norm{v_\Delta}_{\L^q(C_j^N(\Delta))} \leq \varepsilon_j \abs{\Delta}^{\frac{1}{q}-\frac{1}{p}} \norm{b_\Delta}_p \leq C \alpha \varepsilon_j \abs{\Delta}^{\frac{1}{q}} .
	\end{align}
	We would like to estimate the norm of $f$ in terms of the maximal function on $\R^{n+1}$. However, there is no single metric on $\R^{n+1}$ for which all sets $2^jQ \times N^jI$ with $Q$, $I$, $j$ as above are metric balls. For now, we define $\M_j$ as the maximal function on $\R^{n+1}$ that is associated to the metric
	\begin{align*}
		\d^{(j)}((x,t),(y,s)) \coloneqq \max\biggl\{\frac{\abs{x-y}_\infty}{2^{j+1}}, \frac{\abs{t-s}^{\nicefrac{1}{2}}}{N^{j+1}}\biggr\},
	\end{align*}
	so that for fixed $j$, the sets $2^{j+1}Q \times N^{j+1}I$ are metric balls of radius $r(\Delta)$ with respect to $\d^{(j)}$. We can then estimate 
	\begin{align*}
		\norm{f}_{\L^{q'}(C_j^N(\Delta))}  &\leq (2^n N^2)^{\frac{j+1}{q'}}\abs{\Delta}^{\frac{1}{q'}} \biggl(\bariint_{2^{j+1}Q\times N^{j+1}I} \abs{f}^{q'} \biggr)^{\nicefrac{1}{q'}}\\
		&\leq (2^n N^2)^{\frac{j+1}{q'}}\abs{\Delta}^{\frac{1}{q'}} \bigl(\M_j(\abs{f}^{q'})(y)\bigr)^{\frac{1}{q'}}
	\end{align*}
	for every $y \in \Delta$. Taking averages in $y$ yields
	\begin{align}\label{eq: BK f estimate}
		\norm{f}_{\L^{q'}(C_j^N(\Delta))} \leq (2^n N^2)^{\frac{j+1}{q'}}\abs{\Delta}^{\frac{1}{q'}-1} \iint_\Delta\bigl(\M_j(\abs{f}^{q'})\bigr)^{\frac{1}{q'}}.
	\end{align}
	Combining \eqref{eq: BK v estimate} and \eqref{eq: BK f estimate}, we arrive at
	\begin{align*}
		\Bigl\vert\iint_{\R^{n+1}} \Bigl(\sum_{\Delta \in \mathcal{D}}|v_\Delta|\Bigr) f\Bigr\vert &\leq C\alpha \sum_{\Delta \in \mathcal{D}} \sum_{j\geq 1} \varepsilon_j (2^n N^2)^{\frac{j+1}{q'}} \iint_\Delta \bigl(\M_j(\abs{f}^{q'})\bigr)^{\frac{1}{q'}} \\
		&= C\alpha \sum_{j\geq 1} \varepsilon_j (2^n N^2)^{\frac{j+1}{q'}} \sum_{\Delta \in \mathcal{D}}\iint_\Delta \bigl(\M_j(\abs{f}^{q'})\bigr)^{\frac{1}{q'}} \\
		&= C\alpha \sum_{j\geq 1} \varepsilon_j (2^n N^2)^{\frac{j+1}{q'}} \iint_{\cup_{\Delta \in \mathcal{D}}\Delta} \bigl(\M_j(\abs{f}^{q'})\bigr)^{\frac{1}{q'}}.
    \end{align*}
	Next, we estimate the integral of the maximal function by Kolmogorov's inequality
    \begin{align*}
        \iint_{\cup_{\Delta \in \mathcal{D}}\Delta} \bigl(\M_j(\abs{f}^{q'})\bigr)^{\frac{1}{q'}} \leq c |\cup_{\Delta \in \mathcal{D}}\Delta|^{\frac{1}{q}} \norm{\abs{f}^{q'}}_1^{\frac{1}{q'}},
    \end{align*}
    see, e.g., \cite[Lemma 5.16]{Duoandikoetxea}. We note carefully that the constant $c$ depends only on $p$ and the weak type bound of $\M_j$. The latter depends on the dimension and the doubling constant of the metric $\d^{(j)}$, see, e.g., \cite[Lemma 3.12]{BjornBjorn} and this is just $2^{n+2}$, hence independent of $j$. Consequently, we find
    \begin{align*}
        \Bigl\vert\iint_{\R^{n+1}} \Bigl(\sum_{\Delta \in \mathcal{D}}|v_\Delta|\Bigr) f\Bigr\vert
		&\leq C\alpha \sum_{j\geq 1} \varepsilon_j (2^n N^2)^{\frac{j+1}{q'}} \abs{\cup_{\Delta \in \mathcal{D}}\Delta}^{\frac{1}{q}} \norm{\abs{f}^{q'}}_1^{\frac{1}{q'}}\\
		&\leq C\alpha^{1-\frac{p}{q}} \norm{u}_p^{\frac{p}{q}},
	\end{align*}
    with $C$ absorbing the finite quantity $S$.
	Altogether, we obtain $\sum_{\Delta \in \mathcal{D}}|v_\Delta| \in \L^q(\R^{n+1})$ with
	\begin{align}\label{eq: BK end of Step 2}
		\Bigl\|\sum_{\Delta \in \mathcal{D}}|v_\Delta|\Bigr\|_q^q \leq C\alpha^{q-p} \norm{u}_p^{p}.
	\end{align}
	
	\emph{Step 2: Bringing it all together.}
    
	We complete the proof by estimating all three terms on the right-hand side of \eqref{eq: BK decomposition of u} by the right-hand side of \eqref{eq: sufficient for weak bound BK}. By \ref{eq: CZ-i}, we have $\abs{g}\leq \min(\abs{u},\alpha)$ and so $\abs{g}^q\leq \alpha^{q-p} \abs{u}^p$, yielding
	\begin{align*}
		\norm{Tg}_q^q \leq C \norm{g}_q^q \leq C \alpha^{q-p} \norm{u}_p^p,
	\end{align*}
    where $C$ now also depends on the strong $(q,q)$ bound of $T$. By \eqref{eq: BK end of Step 2}, bounding $T\sum_{\Delta \in \mathcal{D}}v_\Delta$ is straightforward:
	\begin{align*}
		\norm{T \sum_{\Delta \in \mathcal{D}}v_\Delta}_q^q \leq C \norm{\sum_{\Delta \in \mathcal{D}}v_\Delta}_q^q \leq C \alpha^{q-p} \norm{u}_p^p.
	\end{align*}
	For $T\sum_{\Delta \in \mathcal{D}}w_\Delta$, we again proceed by duality. Let $f \in \L^{q'}(\R^{n+1} \setminus \Omega)$ with $\norm{f}_{q'} \leq 1$. As $T$ is bounded and sublinear, $T$ is also countably sublinear and we may estimate
	\begin{align*}
		\Bigl\vert\iint_{\R^{n+1} \setminus \Omega} \bigl(T\sum_{\Delta \in \mathcal{D}}w_\Delta\bigr) f\Bigr\vert
		&\leq \sum_{\Delta \in \mathcal{D}} \iint_{\R^{n+1} \setminus \Omega} \abs{T(w_\Delta) f} = \sum_{\Delta \in \mathcal{D}} \sum_{j\geq 2} \iint_{C_j^N(\Delta)} \abs{T(w_\Delta) f} \\
		&\leq \sum_{\Delta \in \mathcal{D}} \sum_{j\geq 2} \norm{T(w_\Delta)}_{\L^q(C_j^N(\Delta))} \norm{f}_{\L^{q'}(C_j^N(\Delta))}.
	\end{align*}
	By \eqref{eq: BK1}, $\norm{T(w_\Delta)}_{\L^q(C_j^N(\Delta))}$ admits exactly the same bounds as $\norm{v_\Delta}_{\L^q(C_j^N(\Delta))}$ and we may proceed as done after \eqref{eq: Step 2 duality} in Step 1 to conclude.
\end{proof}

\subsection{Extrapolation from off-diagonal estimates}

The following criterion is inspired by the arguments in \cite{KW} and learned from \cite{BechtelOuhabaz}. It can be seen as a version of the boundedness extrapolation in Lemma~\ref{lem: Toolbox} \ref{item: Extrapolation}, adapted to the weaker decay exhibited by the parabolic resolvent family in Theorem \ref{thm: parabolicODEs}. Figure~\ref{fig: extrapolation range} illustrates the extrapolation range obtained from this criterion.

\begin{prop}\label{prop: MtMx}
Let $1\le p <  s <\infty$. Let $(T_\lambda)_{\lambda>0}$ be a family of linear operators on  $\L^s(\R^{n+1})$. Assume that for sufficiently large $N>1$, there exists a constant $C$ such that for all $\lambda>0$, $j\ge 1$, and $u\in \L^p(\R^{n+1})\cap \L^s(\R^{n+1})$, the following two estimates hold:
\begin{align}
    \| \one_{\Delta_\lambda} T_\lambda (\one_{C^N_j(\Delta_\lambda)} u)\|_{s} &\le C N^{-2j} \| \one_{C^N_j(\Delta_\lambda)} u \|_s, \label{eq: MtMx1} \\
    \| T_\lambda  u\|_s &\le C \lambda^{ -\gamma_{p,s} } \|u \|_p. \label{eq: MtMx2}
\end{align}
For $\theta\in [0,1]$, set $q\coloneqq[p,s]_\theta$. If $\theta > \nicefrac{1}{q}$, then $(T_\lambda)_{\lambda>0}$ is $\L^r$ bounded for all $r\in(q,s]$.
\end{prop}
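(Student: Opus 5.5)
We plan to derive, from \eqref{eq: MtMx1} and \eqref{eq: MtMx2}, a single $\L^q$–$\L^s$ off-diagonal bound with decay in $j$, and then dominate $T_\lambda u$ on each parabolic cube of radius $\lambda$ by the iterated one-dimensional maximal operators $\M_t\M_x$ applied to $|u|^q$. The resulting iterated maximal function is not of weak type $(1,1)$, but this does not matter here: it is bounded on $\L^{r/q}$ whenever $r>q$.

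\emph{Step 1 (interpolation).} Fix $\lambda$, a parabolic cube $\Delta=\Delta_\lambda$ and $j\ge1$, and consider the linear operator $u\mapsto \one_{\Delta} T_\lambda(\one_{C^N_j(\Delta)}u)$. By \eqref{eq: MtMx1} it maps $\L^s\to\L^s$ with norm $CN^{-2j}$. By \eqref{eq: MtMx2} it maps $\L^p\to\L^s$ with norm $C\lambda^{-\gamma_{p,s}}$. Riesz--Thorin with $\nicefrac1q=\nicefrac{(1-\theta)}{p}+\nicefrac{\theta}{s}$ then gives
\begin{equation*}
\|\one_\Delta T_\lambda(\one_{C^N_j(\Delta)}u)\|_s\le C\lambda^{-(1-\theta)\gamma_{p,s}}N^{-2j\theta}\|\one_{C^N_j(\Delta)}u\|_q ,
\end{equation*}
where $(1-\theta)\gamma_{p,s}=\gamma_{q,s}$, so $\lambda^{-\gamma_{q,s}}\simeq|\Delta|^{\nicefrac1s-\nicefrac1q}$. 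We then write $\|\one_{C_j^N(\Delta)}u\|_q\le |2^{j+1}Q\times N^{j+1}I|^{1/q}\bigl(\bariint_{2^{j+1}Q\times N^{j+1}I}|u|^q\bigr)^{1/q}$. The volume of the stretched cube is $\lesssim(2^nN^2)^{j/q}|\Delta|^{1/q}$. Thus the $j$-th term is bounded by $C|\Delta|^{1/s}N^{-2j(\theta-\nicefrac1q)}2^{nj/q}$ times the average.

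\emph{Step 2 (maximal domination).} For $y\in\Delta$, the average of $|u|^q$ over the product set $2^{j+1}Q\times N^{j+1}I$ is an average in $t$ of averages in $x$ over cubes containing $y_x$. Hence it is at most $\M_t\M_x(|u|^q)(y)$, where $\M_x$ is the spatial maximal operator (in $x$, for each fixed $t$) and $\M_t$ the one-dimensional one (in $t$). This bound is uniform in $j$, which is the point: no single metric is needed.

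\emph{Step 3 (summation over $j$ and the tiling).} Since $\theta>\nicefrac1q$, the factor $N^{-2(\theta-1/q)}2^{n/q}<1$ once $N$ is large, which is where ``$N$ sufficiently large'' enters. Summing over $j$ therefore gives
\begin{equation*}
\|\one_\Delta T_\lambda u\|_s\le C|\Delta|^{1/s}\inf_{\Delta}\bigl(\M_t\M_x|u|^q\bigr)^{1/q}.
\end{equation*}
For $r\in(q,s]$, Hölder on $\Delta$ gives $\|\one_\Delta T_\lambda u\|_r\le |\Delta|^{1/r-1/s}\|\one_\Delta T_\lambda u\|_s$. Raising to the power $r$ and bounding the infimum by the average, we obtain $\|\one_\Delta T_\lambda u\|_r^r\le C\iint_\Delta(\M_t\M_x|u|^q)^{r/q}$. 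We then sum over a tiling of $\R^{n+1}$ by parabolic cubes of radius $\lambda$. Since $r/q>1$, the operators $\M_x$ and $\M_t$ are each bounded on $\L^{r/q}$ (by Fubini and the classical maximal theorem). This yields $\|T_\lambda u\|_r\le C\|u\|_r$ for $u\in\L^p\cap\L^s$, with a constant independent of $\lambda$, and density finishes the argument.

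The main obstacle is the bookkeeping in Step 1. One has to check that the powers of $\lambda$ from \eqref{eq: MtMx2} combine exactly with the volume factors of the stretched annuli, so that everything is scale invariant. One also has to check that the net growth $(2^nN^2)^{j/q}$ is beaten by $N^{-2j\theta}$ precisely under $\theta>1/q$. Finally, one must make sure that the cube $\Delta$ and the annuli $C_j^N(\Delta)$ together cover $\R^{n+1}$ (here $C_1^N$ contains $\Delta$), so that no near-diagonal term is missed.
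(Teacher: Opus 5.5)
Your proposal is correct and follows the paper's proof. Both arguments interpolate \eqref{eq: MtMx1} and \eqref{eq: MtMx2} into an $\L^q$--$\L^s$ off-diagonal bound with factor $N^{-2j\theta}$, dominate the averages over the stretched boxes $2^{j+1}Q\times N^{j+1}I$ by $\M_t\M_x|u|^q$, sum the geometric series using $\theta>1/q$ and $N$ large, and conclude from the $\L^{r/q}$-boundedness of $\M_t\M_x$. The only cosmetic difference is in the globalization step: you tile $\R^{n+1}$ by parabolic cubes of radius $\lambda$, whereas the paper proves a pointwise bound at the center and uses the Fubini identity $\|f\|_r^r=\iint\bariint_{\Delta_\lambda(x,t)}|f|^r$.
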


\begin{figure}[ht]
    \definecolor{gold}{HTML}{F1A226}
    \colorlet{color1}{violet}
    \colorlet{color2}{gold}
    \centering
    \begin{tikzpicture}[scale=7, font=\large]
        
        \def\s{0.15}      
        \def\p{0.7}       
        \pgfmathsetmacro{\q}{
            -\p * (\s - \p - 1)^(-1) 
        }
        \def\one{1}
        
        \filldraw[color=color2] (0,\s) circle (0.015);
        \draw[color=color2, line width=1pt] (0,\q) circle (0.015);
        \draw[color=color2, line width=1.8pt] (0,\s) -- (0,{\q-0.015});
        \draw[dashed, line width=1.8pt, color=color2] (0.02,\q) -- (\q,\q);
        \node[left, color=color2] at (0,{0.5 * (\q + \s)}) {$\nicefrac{1}{r}$};
        
        \draw[line width=1.8pt, color=color1] (0,\p) -- (1,\s);
        \node[color1, right] at (1, \s) {$\nicefrac{1}{\sigma} = \nicefrac{1}{[p,s]_\theta}$};
        
        \draw[dashed] (0,0) -- (1,1);
        
        \foreach \x in {\s,\p,1}{
            \draw[dashed, white!70!black] (0,\x) -- (1,\x);
        }
        \draw[dashed, white!70!black] (1,0) -- (1,1);
        
        \def\tick{0.012}
        
        \node[below] at (\s,0) {$\nicefrac{1}{s}$};
        \node[below] at (\q,0) {$\nicefrac{1}{q}$};
        \node[below] at (\p,0) {$\nicefrac{1}{p}$};
        \node[below] at (1,0) {$1$};
        \foreach \x in {\s,\q,\s,1}{
            \draw (\x,\tick) -- (\x,-\tick);
        }
        
        \node[left] at (-0.02,\s) {$\nicefrac{1}{s}$};
        \node[left] at (-0.02,\q) {$\nicefrac{1}{q}$};
        \node[left] at (0,\p) {$\nicefrac{1}{p}$};
        \node[left] at (0,1) {$1$};
        \foreach \y in {\s,\q,\p,1}{
            \draw (\tick,\y) -- (-\tick,\y);
        }
        
        \draw[->] 
        (0,0) -- (1.05,0) node[right] {$\theta$};
        \draw[->] 
        (0,0) -- (0,1.05) node[above] {$\nicefrac{1}{\sigma}$};
    \end{tikzpicture}
    \caption{A $(\theta, \nicefrac{1}{\sigma})$-plane covering the exponents in Proposition~\ref{prop: MtMx}. The line defined by $\nicefrac{1}{\sigma} = \nicefrac{1}{[p,s]_\theta}$ (\textcolor{color1}{violet}) and the bisector (dashed) intersect at some point $(\theta, \nicefrac{1}{q})$. Proposition \ref{prop: MtMx} extrapolates boundedness from $s$ down to but not including $q$ (\textcolor{color2}{golden}).}
    \label{fig: extrapolation range}
\end{figure}
\begin{proof}
    Fix $\theta\in [0,1]$ such that for $q\coloneqq[p,s]_\theta$ we have $\theta > \nicefrac{1}{q}$. It suffices to show that there is a constant $C$ such that 
    \begin{align}\label{eq: Avg M_tM_x estimate}
        (\Avg_{s,\lambda} |(T_\lambda u)|)(x,t) &\coloneqq \left( \bariint_{\Delta_\lambda(x,t)} |(T_\lambda u)(y,s)|^s \, \d y \d s  \right )^{1/s} \leq C \left ( \M_t \M_x |u|^q\right )^{\nicefrac{1}{q}}(x,t),
    \end{align}
    holds for all $(x,t)\in\R^{n+1}$, $\lambda > 0$ and $u\in \L^p(\R^{n+1})\cap \L^s(\R^{n+1})$. Indeed, boundedness of $(T_\lambda)_{\lambda> 0}$ then readily follows from
    \begin{align*}
        \norm{T_\lambda u}_r^r 
        &= \iint_{\R^{n+1}} \abs{(T_\lambda u)(y,s)}^r \dd (y,s)\\
        &=  \iint_{\R^{n+1}} \abs{(T_\lambda u)(y,s)}^r \biggl( \bariint_{\Delta_\lambda (y,s)}\dd (x,t) \biggr) \dd (y,s) \\
        &= \iint_{\R^{n+1}} \biggl( \bariint_{\Delta_\lambda(x,t)} \abs{(T_\lambda u)(y,s)}^r \dd(y,s) \biggr) \dd(x,t) \\
        &\leq  \iint_{\R^{n+1}} \biggl( \bariint_{\Delta_\lambda(x,t)} \abs{(T_\lambda u)(y,s)}^s \dd(y,s) \biggr)^{\frac{r}{s}} \dd(x,t) \\
        &= \iint_{\R^{n+1}} \bigl(\Avg_{s,\lambda}\abs{T_\lambda u}(x,t)\bigr)^r \dd(x,t) \\
        &\leq C \iint_{\R^{n+1}} \left ( \M_t \M_x |u|^q\right )^{\frac{r}{q}} (x,t) \dd(x,t) \\
        &\leq C \norm{u}_r^r,
    \end{align*}
    where we used the $\L^{\nicefrac{r}{q}}$ bound of the iterated maximal function $\M_t\M_x$ in the penultimate step.
    
    We turn to the proof of \eqref{eq: Avg M_tM_x estimate}. By interpolation, we have
    \begin{equation}
        \| \one_{\Delta_\lambda} T_\lambda (\one_{C^N_j(\Delta_\lambda)} u)\|_{s} \le C N^{-2j\theta} \lambda^{\left ( \frac{n+2}{s}-\frac{n+2}{p} \right )(1-\theta)} \| \one_{C^N_j(\Delta_\lambda)} u \|_q,
    \end{equation}
    for all $\lambda>0$ and $u\in \L^p(\R^{n+1})\cap \L^s(\R^{n+1})$. Thus, we get for all $(x,t)\in \R^{n+1}$ and $\lambda>0$ that
    \begin{align*}
        (\Avg_{s,\lambda} |T_\lambda u|)(x,t)& = C \lambda^{-\frac{n+2}{s}} \norm{ \one_{\Delta_\lambda(x,t)} T_\lambda u}_{s}
        \\& \le C \lambda^{-\frac{n+2}{s}} \sum_{j=1}^{\infty} \| \one_{\Delta_\lambda(x,t)} T_\lambda (\one_{C^N_j(\Delta_\lambda(x,t))} u)\|_{s}
        \\& \le C \lambda^{-\frac{n+2}{s}} \sum_{j=1}^{\infty} N^{-2j\theta} \lambda^{\left ( \frac{n+2}{s}-\frac{n+2}{p} \right )(1-\theta)} \| \one_{C^N_j(\Delta_\lambda(x,t))} u \|_q
        \\&= C \lambda^{-\frac{n+2}{q}} \sum_{j=1}^{\infty} N^{-2j\theta}  \| \one_{C^N_j(\Delta_\lambda(x,t))} u \|_q
        \\&\le C \lambda^{-\frac{n+2}{q}} \sum_{j=1}^{\infty} N^{-2j\theta}  (2^{j+1}\lambda)^{\frac{n}{q}} (N^{j+1}\lambda)^{\frac{2}{q}} \left ( \M_t \M_x |u|^q\right )^{\frac{1}{q}}(x,t)
        \\& = C  \biggl( \sum_{j=1}^{\infty} N^{2j\bigl(\tfrac{1}{q}-\theta\bigr)} 2^{j\frac{n}{q}} \biggr)  \left ( \M_t \M_x |u|^q\right )^{\frac{1}{q}}(x,t) ,
    \end{align*}
    where $C$ now depends on $q$ as well. If $\theta>\nicefrac{1}{q}$, we can choose $N>1$ large enough such that the sum in $j$ converges and \eqref{eq: Avg M_tM_x estimate} follows.
\end{proof}

\section{The critical numbers}\label{S6}

Let us recall that
\begin{align*}
    & p_-(\mathcal{H})\coloneqq\inf\left\{ p\ge1: \  \left ( \mathcal{E}_\lambda \right )_{\lambda>0} \ \text{is} \ \L^p \ \text{bounded} \right\},\\
    & q_-(\mathcal{H})\coloneqq\inf\left\{ p\ge1: \ \left (\lambda \mathbb{D} \mathcal{E}_\lambda \right )_{\lambda>0} \ \text{is} \ \L^p \ \text{bounded} \right\}.
\end{align*}
The main theorem of this section is the following. 
\begin{thm}\label{thm: p_=q_}
We have $p_-(\mathcal{H})=q_-(\mathcal{H}) \in [1, 2_\star)$. More precisely, there exists $\varepsilon_0>0$ depending only on $n$ and the ellipticity constants $M$ and $\nu$ such that $p_-(\H)\le 2_\star-\varepsilon_0$.
\end{thm}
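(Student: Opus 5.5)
We prove three things: $p_-(\H)\le q_-(\H)$, then $q_-(\H)\le p_-(\H)$, and finally the quantitative bound $p_-(\H)\le 2_\star-\varepsilon_0$.

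\emph{Step 1: $p_-(\H)\le q_-(\H)$.} Suppose $(\lambda\D\E_\lambda)_{\lambda>0}$ is $\L^\varrho$ bounded for some $\varrho<2$. We first get an $\L^{\varrho}$–$\L^{\varrho^\star}$ bound for $\E_\lambda$ from the parabolic Sobolev embedding (Lemma~\ref{lem: Sobolev}, used in its $\L^\varrho$ form, i.e. $\|v\|_{\varrho^\star}\lesssim\|\D v\|_\varrho$). This gives
\begin{align*}
\|\E_\lambda u\|_{\varrho^\star}\lesssim \lambda^{-1}\|\lambda\D\E_\lambda u\|_\varrho\lesssim \lambda^{-1}\|u\|_\varrho ,
\end{align*}
which has the correct scaling $\gamma_{\varrho,\varrho^\star}=1$. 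Combined with $\L^2$ off-diagonal estimates (Proposition~\ref{prop: classical ODEs}) and Interpolation~2 (Lemma~\ref{lem: Toolbox}), we obtain $\L^{\sigma}$–$\L^{\sigma^\star}$-type off-diagonal bounds for exponents slightly above $\varrho$. Extrapolation then yields $\L^\sigma$ boundedness of $\E_\lambda$ for every $\sigma\in(\varrho,2]$. Since $(\E_\lambda)$ are resolvents, Lemma~\ref{lem: gives m} is also available if the composition argument needs to be iterated.

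\emph{Step 2: $q_-(\H)\le p_-(\H)$.} This is the main obstacle, because $D_t^{1/2}$ is nonlocal and the decay in Theorem~\ref{thm: parabolicODEs} is only polynomial in the stretched geometry. Let $\E_\lambda$ be $\L^{p}$ bounded and take $r\in(p,2)$. We run a bootstrap along the lower Sobolev conjugates, starting at $q=2$.

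Suppose $\lambda\D\E_\lambda$ is known to be $\L^q$ bounded. Duality applied to Step 1 (with $\H^\star$) together with Lemma~\ref{lem: gives m} gives $\L^{\sigma}$–$\L^q$ bounds for $\E^m_\lambda$ for $\sigma$ close to $p$. Composing with the $\L^q$ bounded family $\lambda\D\E_\lambda$ yields \eqref{eq: MtMx2} for $T_\lambda=\lambda\D\E_\lambda^{m+1}$, with the pair $(\sigma,q)$ in place of $(p,s)$.

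Theorem~\ref{thm: parabolicODEs}\ref{item: annulus to cube}, applied at $\lambda=r$ with decay exponent $\varepsilon=2$, supplies \eqref{eq: MtMx1} in $\L^q$; this requires the $\L^\varrho$ hypothesis with some $\varrho<q$, which the previous step of the bootstrap provides. Proposition~\ref{prop: MtMx} then gives $\L^{r'}$ boundedness of $T_\lambda$ for all $r'>[\sigma,q]_\theta$ with $\theta>1/[\sigma,q]_\theta$. This is a definite gain below $q$, and the size of the gain depends only on the gap between $\sigma$ and $q$.

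To return from $T_\lambda=\lambda\D\E_\lambda^{m+1}$ to $\lambda\D\E_\lambda$, we write $\E_\lambda=\E_\lambda^{m+1}+(\text{terms }\lambda^2\H\E^k_\lambda)$. Alternatively, we use the resolvent identity, which expresses $\E_\lambda$ through $\E_{\lambda/\sqrt2}$-type averages and so handles $\lambda\D\E_\lambda$ from $\lambda\D\E_\lambda^{m+1}$ via the functional calculus. Iterating the bootstrap finitely many times reaches every $r>p_-(\H)$.

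\emph{Step 3: the bound $p_-(\H)<2_\star$.} By Step 1 applied with $\varrho=2$, $\E_\lambda$ is $\L^2$–$\L^{2^\star}$ bounded, and by duality it is $\L^{2_\star}$–$\L^2$ bounded. This places us at the endpoint $2_\star$, but only in the sense of an $\L^{2_\star}$–$\L^2$ bound. To obtain genuine $\L^{2_\star-\varepsilon_0}$ boundedness, we first get $\L^{2}$ boundedness of $\lambda\D\E_\lambda$ slightly beyond $2$ in the dual direction, via Šneiberg stability (as in \cite{AEN2020}) applied to the isomorphism $1+\lambda^2\H$ between the parabolic Bessel potential spaces $\mathrm{H}^{1,q}\to\mathrm{H}^{-1,q}$. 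The stability argument gives $q$ in a neighbourhood of $2$, with a neighbourhood that depends only on $n$, $M$ and $\nu$. Sobolev embedding then shifts this bound to a range around $2_\star$, and the extrapolation of Step 1 yields $p_-(\H)\le 2_\star-\varepsilon_0$.
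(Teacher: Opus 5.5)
Your proposal follows the paper's three-step proof essentially verbatim. You use Sobolev embedding plus interpolation with $\L^2$ off-diagonal estimates and extrapolation for $p_-(\H)\le q_-(\H)$; the iteration combining Lemma~\ref{lem: gives m}, Theorem~\ref{thm: parabolicODEs}\,\ref{item: annulus to cube} at scale $r=\lambda$ and Proposition~\ref{prop: MtMx} for $q_-(\H)\le p_-(\H)$; and an $\L^p$ gradient bound for some $p>2$ from Shneiberg stability as in \cite{AEN2020}, followed by Sobolev embedding, duality and extrapolation, for $p_-(\H)\le 2_\star-\varepsilon_0$.

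The only loose point is the return from $\lambda\D\E_\lambda^{m+1}$ to $\lambda\D\E_\lambda$. Your telescoping sum is circular, since $\lambda^3\D\H\E_\lambda^{k+1}=\lambda\D\E_\lambda^{k}-\lambda\D\E_\lambda^{k+1}$. Your second idea does work once made precise as $\E_\lambda^{k}=k\int_0^1 s^{k-1}\E_{\lambda\sqrt{s}}^{k+1}\,\mathrm{d}s$, which gives $\sup_\lambda\|\lambda\D\E^k_\lambda\|_{q\to q}\le \frac{k}{k-1/2}\sup_\mu\|\mu\D\E^{k+1}_\mu\|_{q\to q}$. Alternatively, you can simply invoke Lemma~\ref{lem: sans m} as the paper does.
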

We set the stage for the proof with results that are interesting in their own right. We first recall a parabolic Sobolev embedding. For completeness, we include a proof for our choice of the parabolic gradient and the underlying energy space in Appendix~\ref{annexe}. Variants of this result are of course well-known, see, e.g., \cite[Theorem 3.1]{GopalaRao} and \cite[Lemma 3.4]{ABES2/2019}.

\begin{lem}[Parabolic Sobolev Embedding]\label{lem: Sobolev}
    Let $p \in (1, n+2)$ and $u \in \EE$ such that $\mathbb{D}u \in \L^p(\mathbb{R}^{n+1})$. Then $u \in \L^{p^\star}(\mathbb{R}^{n+1})$, and there exists a constant $C$ depending only on $n$ and $p$ such that
    \begin{align*}
        \norm{u}_{p^\star} \le C \|\mathbb{D}u\|_p.
    \end{align*}
\end{lem}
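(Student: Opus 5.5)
We prove the parabolic Sobolev embedding by representing $u$ through a Riesz-potential-type kernel adapted to the parabolic gradient $\mathbb{D} = (\nabla_x, D_t^{1/2})$. We then apply the Hardy--Littlewood--Sobolev inequality on the space of homogeneous type $(\R^{n+1}, \d, \mathrm{d}(x,t))$, whose homogeneous dimension is $d = n+2$.

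\emph{Step 1: the multiplier identity.} For $u \in \mathcal{S}(\R^{n+1})$ with Fourier variables $(\xi,\tau)$, we set $\rho(\xi,\tau) := (|\xi|^4 + \tau^2)^{1/4}$, which is parabolically homogeneous of degree $1$. We write
\begin{equation*}
\widehat u = \sum_{k=1}^n \frac{-\ii \xi_k}{\rho^2}\,\widehat{\partial_k u} + \frac{|\tau|^{1/2}\,|\tau|^{1/2}}{\tau^2 \rho^{-2}\,\rho^{4}}\cdot \frac{\tau^2}{|\tau|}\,\widehat u ,
\end{equation*}
or, more cleanly,
\begin{equation*}
\widehat u = \frac{|\xi|^2}{\rho^4}\,\rho^2 \widehat u + \frac{|\tau|}{\rho^4}\,\rho^2\widehat u ,
\end{equation*}
using $|\xi|^4+\tau^2 = \rho^4$ after a harmless rearrangement. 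Concretely, I would use
\begin{equation*}
u = \sum_k m_k(D)\,\partial_k u + m_0(D)\, D_t^{1/2} u,
\end{equation*}
with
\begin{equation*}
m_k = \frac{-\ii \xi_k |\xi|^2}{\rho^4},\qquad m_0 = \frac{|\tau|^{1/2}\,\mathrm{sgn}(\tau)\,\tau}{\rho^4},
\end{equation*}
which one checks via $\xi_k\xi_k|\xi|^2$ summed plus $\tau^2 = \rho^4$. Each $m_\bullet$ is smooth away from the origin, except that $m_0$ is merely Hölder continuous along $\tau = 0$, and is parabolically homogeneous of degree $-1$.

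\emph{Step 2: kernel bounds.} I would show that the inverse Fourier transforms $K_\bullet$ are locally integrable and satisfy
\begin{equation*}
|K_\bullet(x,t)| \le C\,\d((x,t),0)^{-(d-1)}.
\end{equation*}
By homogeneity, $K_\bullet(x,t) = \delta^{-(d-1)} K_\bullet(\delta^{-1}x, \delta^{-2}t)$, so it suffices to prove boundedness on the parabolic unit sphere. For $m_k$ this is standard, since the symbol is smooth. For $m_0$, the lack of smoothness in $\tau$ is the main obstacle. To handle it, I would integrate out $\xi$ first: for fixed $\tau$, the function $\xi \mapsto m_0$ is smooth and decays in $|\xi|$, and the remaining one-dimensional oscillatory integral in $\tau$ can be estimated by splitting at $|\tau| \sim |t|^{-1}$ and integrating by parts once on the large-$\tau$ part. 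Alternatively, and more robustly, one can avoid pointwise bounds for $m_0$ entirely: write
\begin{equation*}
m_0(D) D_t^{1/2} u = \bigl(\tau\rho^{-4}\bigr)(D)\, \bigl(H_t D_t^{1/2}\bigr) D_t^{1/2} u\cdot(\pm 1),
\end{equation*}
observe that $\tau\rho^{-4}$ is smooth and homogeneous of degree $-2$, and use that $H_t$ is bounded on $\L^p$ and commutes with everything. This reduces matters to the smooth symbol $\tau |\tau|^{1/2}\rho^{-4}\cdot|\tau|^{-1/2}$. I expect this second route, $m_0 = \tau\rho^{-4}|\tau|^{1/2}\cdot\mathrm{sgn}$, to need $m_0(D) = \ii\,(\tau\rho^{-4})(D)\,|\tau|^{1/2}$-type factorization; if the residual $|\tau|^{1/2}$ causes trouble, I fall back on the first route.

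\emph{Step 3: conclusion and extension.} Once
\begin{equation*}
|u| \le C\, I_1(|\mathbb{D} u|), \qquad I_1 f(z) = \int \d(z,w)^{1-d} f(w)\,\mathrm{d}w ,
\end{equation*}
is established (with the Hilbert transform applied to $D_t^{1/2}u$ if the second route is used, which is harmless since it is bounded on $\L^p$), the fractional integration theorem on spaces of homogeneous type gives
\begin{equation*}
\|u\|_{p^\star} \le C\,\|\mathbb{D}u\|_p \qquad \text{for } 1 < p < d,\quad \tfrac{1}{p^\star} = \tfrac{1}{p} - \tfrac{1}{d}.
\end{equation*}
For general $u \in \EE$ with $\mathbb{D}u \in \L^p$, I would approximate $u$ by mollified and truncated functions $u_k \in \mathcal{S}$ such that $\mathbb{D}u_k \to \mathbb{D}u$ in $\L^p$ and $u_k \to u$ in $\L^2$. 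The fractional operator $D_t^{1/2}$ does not commute with cutoffs, so I would use Fourier-side truncation (mollification plus frequency cutoff away from zero), which preserves both norms. Fatou's lemma along an almost-everywhere convergent subsequence then yields the estimate for $u$.
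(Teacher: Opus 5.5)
There is a genuine gap in Step~2, in the treatment of the temporal symbol $m_0(\xi,\tau)=|\tau|^{3/2}/(|\xi|^4+\tau^2)$. Your identity $u=\sum_k m_k(D)\partial_k u+m_0(D)D_t^{1/2}u$ is correct. The $m_k$ are smooth away from the origin, so their kernels are indeed bounded by $C\,\d((x,t),0)^{-(d-1)}$. The pointwise bound you claim for $K_0$, however, is false when $n\ge 4$.

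For fixed $\tau\neq 0$ the symbol behaves like $|\tau|^{3/2}|\xi|^{-4}$ for $|\xi|\gg|\tau|^{1/2}$. This is not integrable in $\xi$ once $n\ge 4$, so your plan to ``integrate out $\xi$ first'' breaks down. The partial inverse Fourier transform in $x$ behaves like $c\,|\tau|^{3/2}|x|^{4-n}$ as $x\to 0$, or logarithmically when $n=4$. The inverse Fourier transform of $|\tau|^{3/2}$ in time is $c'|t|^{-5/2}$ with $c'\neq 0$. Combining these, $K_0(x,t)\approx c''|t|^{-5/2}|x|^{4-n}$ near the time axis.

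So $K_0$ is unbounded on the parabolic unit sphere near the poles $(0,\pm1)$, and the domination $|u|\le C\,I_1(|\D u|)$ in Step~3 cannot be established. The cause is the non-smoothness of $m_0$ across $\{\tau=0\}$, seen at high spatial frequencies. Your second route does not help either. The Hilbert transform only absorbs $\mathrm{sgn}(\tau)$ and leaves the factor $|\tau|^{1/2}$. The factorization you write also introduces an extra, unbounded $D_t^{1/2}$.

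The missing idea is to put all the non-smoothness at $\tau=0$ into an order-zero multiplier, which is $\L^p$ bounded by the Marcinkiewicz multiplier theorem, and keep only a harmless order $-1$ potential. This is what the paper does. It writes $u=h*T(D_t^{1/2}u+R\nabla_x u)$, where:
\begin{itemize}
\item $h$ is the kernel of $(\partial_t-\Delta_x)^{-1/2}$, which has symbol $(\ii\tau+|\xi|^2)^{-1/2}$ and is $\L^p-\L^{p^\star}$ bounded;
\item $T$ is the Marcinkiewicz multiplier with symbol $(\ii\tau+|\xi|^2)^{1/2}/(|\tau|^{1/2}+\ii|\xi|)$;
\item $R$ is the spatial Riesz transform.
\end{itemize}
The paper first proves this identity on Schwartz functions whose Fourier transform vanishes near $0$. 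It then extends it to all of $\EE$ by density, passing to the limit in $\L^2$ and $\L^{2^\star}$.

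The same repair works inside your own framework. Factor $m_0=\rho^{-1}\cdot\bigl(|\tau|^{3/2}\rho^{-3}\bigr)$ and $m_k=\rho^{-1}\cdot\bigl(-\ii\xi_k|\xi|^2\rho^{-3}\bigr)$. Then $\rho^{-1}$ is smooth away from the origin with kernel bounded by $C\,\d((x,t),0)^{-(d-1)}$, so the fractional integration theorem applies to it. The order-zero factors satisfy the Marcinkiewicz conditions and are therefore $\L^p$ bounded. Alternatively, you could presumably show $K_0\in\L^{d/(d-1),\infty}$ and use a weak-type Young inequality, but then you must give up the pointwise domination by $I_1$. Your approximation argument at the end of Step~3 is fine in principle.
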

	
The following lemma serves as a blueprint for how $\L^p-\L^{p^{\star}}$ boundedness for $\left ( \mathcal{E}_\lambda \right )_{\lambda>0}$ can be obtained and why this implies an upper bound on $p_{-}(\H)$.
\begin{lem}\label{lem: 2_star}
    The family $\left ( \mathcal{E}_\lambda \right )_{\lambda>0}$ is $\L^{2_\star}-\L^2$ bounded. In particular, $p_-(\mathcal{H})\le 2_\star$.
\end{lem}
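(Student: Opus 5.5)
The plan is to obtain the $\L^{2_\star}-\L^2$ bound by duality, deducing it from an $\L^2-\L^{2^\star}$ bound for the adjoint family $(\E^\star_\lambda)_{\lambda>0}$. Note that $(2_\star)' = 2^\star$, since $\nicefrac{1}{(2_\star)'} = 1 - \nicefrac{1}{2} - \nicefrac{1}{(n+2)} = \nicefrac{1}{2^\star}$. Also $\gamma_{2_\star,2} = \gamma_{2,2^\star} = 1$. By Lemma~\ref{lem: Toolbox}~\ref{item: Duality principle}, it therefore suffices to show
\begin{equation*}
\|\E^\star_\lambda f\|_{2^\star} \le C \lambda^{-1}\|f\|_2 \qquad (\lambda>0,\ f\in \L^2(\R^{n+1})).
\end{equation*}
Since $\H^\star$ is in the same class as $\H$ after time reversal, it is equivalent, and notationally lighter, to prove this for $\E_\lambda$ itself.

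For the bound itself, let $f \in \L^2(\R^{n+1})$ and set $u \coloneqq \E_\lambda f$. Then $u \in \dom(\H) \subseteq \EE$. Because $2 < n+2$, the parabolic Sobolev embedding (Lemma~\ref{lem: Sobolev}) applies with $p=2$ and gives
\begin{equation*}
\|u\|_{2^\star} \le C\|\D u\|_2 = C\lambda^{-1}\|\lambda \D \E_\lambda f\|_2.
\end{equation*}
The right-hand side is at most $C\lambda^{-1}\|f\|_2$ by \eqref{eq: D L2 bdd} in Proposition~\ref{prop: classical ODEs}. (Alternatively, one can test the resolvent equation against $u$: taking real parts, the $H_t$ term drops out by skew-adjointness and ellipticity controls $\lambda^2\|\nabla_x u\|_2^2$. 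The half-derivative, however, needs the $\L^2$ Kato estimate, so quoting \eqref{eq: D L2 bdd} is cleaner.) Applying this to $\H^\star$ and dualizing yields the $\L^{2_\star}-\L^2$ boundedness of $(\E_\lambda)_{\lambda>0}$, with the duality understood on $\L^{2_\star}\cap\L^2$ as in Definition~\ref{def: toolbox}.

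For the consequence $p_-(\H)\le 2_\star$, the plan is to combine the $\L^{2_\star}-\L^2$ bound with the uniform $\L^2$ bound to get $\L^{2_\star}$ boundedness. Here the main obstacle appears: Lemma~\ref{lem: Toolbox}~\ref{item: Extrapolation} requires off-diagonal estimates, not mere boundedness. I would handle this by interpolating the $\L^{2_\star}-\L^2$ bound with the $\L^2$ off-diagonal estimates \eqref{eq: Classical L2 ODES} via Lemma~\ref{lem: Toolbox}~\ref{item: Interpolation principle}. This gives $\L^{p}-\L^2$ off-diagonal estimates for every $p\in(2_\star,2)$, and hence, by Lemma~\ref{lem: Toolbox}~\ref{item: Extrapolation}, $\L^p$ boundedness for all such $p$. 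Consequently $p_-(\H)\le p$ for every $p>2_\star$, that is, $p_-(\H)\le 2_\star$; this is exactly the claimed inequality, since $p_-$ is an infimum. The strict inequality of Theorem~\ref{thm: p_=q_} is not needed here and is left to the later bootstrap.
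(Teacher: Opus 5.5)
Your proposal is correct and matches the paper's proof essentially step for step. You get $\L^2-\L^{2^\star}$ boundedness from the parabolic Sobolev embedding and \eqref{eq: D L2 bdd}, transfer it to $\H^\star$ and dualize, then interpolate with the $\L^2$ off-diagonal estimates and apply Lemma~\ref{lem: Toolbox}~\ref{item: Extrapolation} to conclude $p_-(\H)\le 2_\star$.
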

\begin{proof}
    By the parabolic Sobolev embedding above in the case $p = 2$ and the uniform boundedness of the parabolic gradient family (Proposition \ref{prop: classical ODEs}), there exists a constant $C$ depending only on $M,\nu,n$ such that for all $\lambda>0$ and $u\in \L^2(\R^{n+1})$ we have
    \begin{align*}
        \norm{\E_\lambda u}_{2^\star} \le C \norm{\mathbb{D} \E_\lambda u}_{2} 
        = C \, \lambda^{-1} \norm{\lambda \mathbb{D} \E_\lambda u}_2 
        \le C \, \lambda^{-\gamma_{2,2^\star}} \norm{u}_{2}.
    \end{align*}
    Thus, $\left( \mathcal{E}_\lambda \right)_{\lambda>0}$ is $\L^{2}-\L^{2^\star}$ bounded. The same holds for the adjoint family $\left( \mathcal{E}^\star_\lambda \right)_{\lambda>0}$ since $\H^\star$ belongs to the same class of operators as $\H$. Therefore $\left( \mathcal{E}_\lambda \right)_{\lambda>0}$ is $\L^{2_\star}-\L^2$ bounded by duality (Lemma \ref{lem: Toolbox} \ref{item: Duality principle}).
    
    By interpolation with the $\L^2$ off-diagonal estimates \eqref{eq: Classical L2 ODES} in Proposition \ref{prop: classical ODEs}, the family $\left( \mathcal{E}_\lambda \right)_{\lambda>0}$ satisfies $\L^r-\L^2$ off-diagonal estimates for all $r \in (2_\star,2]$. In particular, by Lemma \ref{lem: Toolbox} \ref{item: Extrapolation}, $\left( \mathcal{E}_\lambda \right)_{\lambda>0}$ is $\L^r$ bounded for all $r \in (2_\star,2]$, and therefore $p_-(\mathcal{H}) \le 2_\star$.
\end{proof}

The following lemma gives us the flexibility to work with higher powers of the resolvents whenever convenient. Again, the proof in \cite{AEbook2023} has nothing to do with the particular elliptic operator under consideration in this reference.

\begin{lem}[{\cite[Lemma 6.5]{AEbook2023}}]\label{lem: sans m}
Let $p \in (1,\infty)$. Assume that there exists an integer $m\ge1$ such that $(\lambda \D\mathcal{E}_\lambda^{m+1})_{\lambda>0}$ is  $\L^p$ bounded. Then so is $(\lambda \D\mathcal{E}_\lambda)_{\lambda>0}$.
\end{lem}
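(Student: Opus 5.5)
The idea is to write $\lambda\D\E_\lambda$ as $\lambda\D\E^{m+1}_\lambda$ plus a finite sum of terms that are already known to be $\L^p$ bounded. We use the resolvent identity
\begin{equation*}
\E_\lambda - \E_\lambda^{2} = \E_\lambda(1-\E_\lambda) = \lambda^2\H\E_\lambda^2 ,
\end{equation*}
and more generally the telescoping sum
\begin{equation*}
\E_\lambda = \E_\lambda^{m+1} + \sum_{k=1}^{m} \bigl(\E_\lambda^{k}-\E_\lambda^{k+1}\bigr) = \E_\lambda^{m+1} + \sum_{k=1}^{m} \lambda^2\H\E_\lambda^{k+1}.
\end{equation*}
Applying $\lambda\D$ gives $\lambda\D\E_\lambda = \lambda\D\E_\lambda^{m+1} + \sum_{k=1}^m \lambda\D(\lambda^2\H\E_\lambda)\E_\lambda^{k}$. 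Iterating this naively does not close, because each term still contains $\lambda\D\E_\lambda$ composed with a bounded operator. So I would instead follow the scheme of \cite[Lemma 6.5]{AEbook2023}, which is a rescaling and averaging argument.

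\textbf{Step 1: reduce to a fixed exponent.} Set $T_\lambda := \lambda\D\E_\lambda$. Since $\E_\lambda$ commutes with $\E_{\mu}$, the key identity is the resolvent relation between different parameters. For $\mu\neq\lambda$,
\begin{equation*}
\E_\lambda - \E_\mu = \frac{\lambda^2-\mu^2}{\mu^2}\,\E_\lambda(1-\E_\mu) ,
\end{equation*}
and hence $\E_\lambda = \E_\mu\bigl(\tfrac{\mu^2}{\lambda^2}+(1-\tfrac{\mu^2}{\lambda^2})\E_\lambda\bigr)$. Iterating this $m$ times expresses $\E_\lambda$ as $\E_\mu^{m}$ times a polynomial in $\E_\lambda$ (of degree at most $m$), with coefficients bounded when $\mu\simeq\lambda$, for instance $\mu=\lambda/\sqrt2$. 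This does not yet give $\L^p$ boundedness, since it needs $\E_\lambda$ to be $\L^p$ bounded.

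\textbf{Step 2: obtain $\L^p$ boundedness of the resolvents themselves.} From the hypothesis and the parabolic Sobolev embedding (Lemma~\ref{lem: Sobolev}), $\E^{m+1}_\lambda$ is $\L^p-\L^{p^\star}$ bounded. One also needs $\L^p$ boundedness of $\E_\lambda^{m+1}$, which I expect is implicitly available, as in the reference. With that in hand, a Neumann-series or contour argument transfers boundedness from $\E^{m+1}$ to $\E$. Concretely, I would write $\E_\lambda = \int_0^\infty \phi(s)\,\E_{s\lambda}^{m+1}\,\d s$ for a suitable positive weight $\phi$. This is the functional calculus identity $(1+z)^{-1} = \int_0^\infty \phi(s)(1+s^2z)^{-m-1}\,\d s$, which holds for $z$ in a sector and which I expect to find via a Laplace-transform representation:
\begin{equation*}
(1+z)^{-1} = \int_0^\infty e^{-t}e^{-tz}\,\d t, \qquad e^{-tz} = \text{average of } (1+\tfrac{t}{k}z)^{-k}\text{-type kernels}.
\end{equation*}

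\textbf{Step 3: conclude.} Applying $\lambda\D$ to the subordination formula gives
\begin{equation*}
\lambda\D\E_\lambda = \int_0^\infty \phi(s)\,s^{-1}\,(s\lambda)\D\E_{s\lambda}^{m+1}\,\d s ,
\end{equation*}
so uniform $\L^p$ boundedness follows provided $\int_0^\infty \phi(s)s^{-1}\,\d s<\infty$. I expect the main obstacle to be constructing such a $\phi$ with integrable $s^{-1}$ behaviour near $0$. As a fallback, I would split $\E_\lambda = \E_\lambda^{m+1} + (\E_\lambda-\E^{m+1}_\lambda)$ and handle the difference through the identity $\lambda^2\H\E_\lambda^{k+1}$, using the boundedness of $\lambda\D$ against $\lambda\,\D^\star$-type terms via duality, exactly as in \cite[Lemma 6.5]{AEbook2023}.
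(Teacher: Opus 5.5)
Your Step 3 is the right mechanism. Once a suitable weight exists it proves the lemma with no further input: no $\L^p$ bound for $\E_\lambda$ or $\E_\lambda^{m+1}$ and no Sobolev embedding are needed, so Steps 1 and 2 and the fallback can be dropped. The genuine gap is that you never produce $\phi$, and this is essentially the whole content of the lemma.

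The route you sketch for $\phi$ also cannot work. A positive average $\int_0^\infty (1+s^2z)^{-m-1}\dd\mu(s)$ is bounded below by $\mu([0,S])(1+S^2z)^{-m-1}$ for every $S>0$. So it can never equal $\e^{-tz}$, which decays exponentially as $z\to\infty$. Subordination runs the other way: resolvent powers are Laplace averages of the semigroup. Nor can the semigroup be used as an intermediate step. Indeed, $\e^{-\lambda^2\H}$ does not regularize in time: for $\partial_t-\Delta_x$ it is a time translation composed with the spatial heat semigroup. Hence $\lambda D_t^{1/2}\e^{-\lambda^2\H}$ is not even bounded on $\L^2$.

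What you need is the elementary identity
\[
\frac{\d}{\d s}\bigl(s^{2k}\E_{s\lambda}^{k}\bigr) = 2k\,s^{2k-1}\E_{s\lambda}^{k+1}, \qquad s>0,\ k\geq 1 .
\]
It follows from the norm derivative $\frac{\d}{\d s}\E_{s\lambda} = -\frac{2}{s}\E_{s\lambda}(1-\E_{s\lambda})$ on $\L^2$, which the resolvent identity gives. Integrate over $(0,1)$; the boundary term at $0$ vanishes since $\|\E_{s\lambda}\|_{2\to 2}\leq 1$. Then apply the closed operator $\lambda\D$ inside the integral. This is legitimate because \eqref{eq: D L2 bdd} gives $\|\D\E_{s\lambda}^{k+1}\|_{2\to 2}\leq C(s\lambda)^{-1}$, so the integrand has operator norm at most $2kC s^{2k-2}$. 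The result is
\[
\lambda\D\E_\lambda^{k} = \int_0^1 2k\,s^{2k-2}\,(s\lambda)\D\E_{s\lambda}^{k+1}\dd s .
\]
Since $\int_0^1 2k s^{2k-2}\dd s = \frac{2k}{2k-1}$, pairing with $g\in\L^{p'}\cap\L^2$ shows that $\L^p$ boundedness of $(\lambda\D\E_\lambda^{k+1})_{\lambda>0}$ implies that of $(\lambda\D\E_\lambda^{k})_{\lambda>0}$. Downward induction from $k=m$ to $k=1$ then proves the lemma.

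Iterating the identity and substituting $s=s_1\cdots s_m$ gives exactly the weight you were looking for. It is nonnegative, supported in $(0,1]$, and satisfies $\int_0^1\phi(s)s^{-1}\dd s=\prod_{k=1}^m\frac{2k}{2k-1}$.

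Incidentally, the first identity in your Step 1 has the wrong sign; it should read $\E_\lambda-\E_\mu=\frac{\mu^2-\lambda^2}{\mu^2}\E_\lambda(1-\E_\mu)$.
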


\begin{proof}[Proof of Theorem \ref{thm: p_=q_}] 
We proceed in three steps. The first and the last are classical, while the second makes use of all the machinery developed so far.

\emph{Step 1: Resolvent estimates from gradient bounds: $p_-(\H) \le q_-(\H)$.}

Let $p \in (q_{-}(\H),2]$. We only have to prove that $p\ge p_{-}(\H)$. Since $(\lambda \mathbb{D}\E_\lambda)_{\lambda>0}$ is $\L^{p}$ bounded, we can follow the blueprint in Lemma \ref{lem: 2_star} to deduce that $\left( \mathcal{E}_\lambda \right)_{\lambda>0}$ is $\L^{p}-\L^{p^\star}$ bounded. Interpolation with $\L^2$ off-diagonal estimates and Lemma \ref{lem: Toolbox} \ref{item: Extrapolation} again show that $\left( \mathcal{E}_\lambda \right)_{\lambda>0}$ is $\L^r$ bounded for all $r \in (p,p^\star]$, and therefore $p_-(\mathcal{H}) \le p$.

\emph{Step 2: Gradient bounds from resolvent estimates: $q_-(\H) \le p_-(\H)$.}

Let $p \in (p_{-}(\H),2]$. We extrapolate boundedness of $(\lambda\D\E_\lambda)_{\lambda>0}$ from $\L^2$ (Proposition \ref{prop: classical ODEs}) down to any $\L^s$ with $s>p$ by iterating the following scheme:
\begin{enumerate}[(1.)]
    \item Take $s \in (p,2] \cap (q_-(\H),2]$ or $s=2$.
    \item By interpolating $\L^{2_\star}-\L^2$ (Lemma \ref{lem: 2_star}) and $\L^p$ boundedness, $(\E_\lambda)_{\lambda>0}$ is $\L^\rho-\L^s$ bounded for some $\rho \in [1,s)$ and triangle interpolation (Lemma \ref{lem: gives m}) yields an integer $m\in\N$ such that $(\E_\lambda^m)_{\lambda>0}$ is $\L^p-\L^s$ bounded. 
    \item Put $T_\lambda \coloneqq \lambda \D \E_\lambda^{m+1}$. As both $(\E_\lambda)_{\lambda>0}$ and $(\lambda\D\E_\lambda)_{\lambda>0}$ are $\L^s$ bounded, $(T_\lambda)_{\lambda>0}$ is $\L^s$ bounded and the off-diagonal estimates in Theorem \ref{thm: parabolicODEs}~\ref{item: annulus to cube} for $r=\lambda$ establish \eqref{eq: MtMx1} for $(T_\lambda)_{\lambda>0}$, while the prior step takes care of \eqref{eq: MtMx2}.
    \item Proposition~\ref{prop: MtMx} yields $\L^r$ boundedness for the exponent $r$ defined in Figure~\ref{fig: figue}.
    \item Lemma~\ref{lem: sans m} yields the conclusion of 4.\ for $(\lambda\D\E_\lambda)_{\lambda>0}$ in place of $(T_\lambda)_{\lambda>0}$.
\end{enumerate}

\begin{figure}[ht]
    \definecolor{gold}{HTML}{F1A226}
    \colorlet{color1}{gold}
    \colorlet{color2}{violet}
    \centering
    \begin{tikzpicture}[scale=7, font=\large]
        
        \def\radius{0.01} 
        \def\s{0.15}      
        \def\p{0.8}       
        \pgfmathsetmacro{\q}{
            -\p * (\s - \p - 1)^(-1) 
        }
        \pgfmathsetmacro{\t}{
            1 / (2 - \s/\p)  
        }
        \pgfmathsetmacro{\r}{
            \t * \p  
        }
        
        \draw[line width=1.8pt, color=color1] (0,\p) -- (1,\s);
        \draw[line width=1.8pt, dashed, color=color1] (0,\p) -- (1,\r);
        \node[right] at (1.05, \r) {$\nicefrac{1}{\sigma} = \nicefrac{1}{[p,r]_\theta}$};
        \node[right] at (1.05, \s) {$\nicefrac{1}{\sigma} = \nicefrac{1}{[p,s]_\theta}$};
        
        \draw[dashed, line width=1.8pt, color=black] (0.02,\r) -- (1,\r);
        \draw[line width=1.8pt, color=color2] (0,0) -- (1,\p);
        \node[right] at (1.05, \p) {$\nicefrac{1}{\sigma} = \nicefrac{\theta}{p}$};
        \draw[->, dashed, black, line width=1pt]
        (1.01,\s)
        to[out=40,in=-40]
        (1.01,\r);
        
        \filldraw (\t,\r) circle (\radius); 
        
        \draw[dashed] (0,0) -- (1,1);
        
        \foreach \x in {\s,\p,1}{
            \draw[dashed] (0,\x) -- (1,\x);
        }
        \draw[dashed] (1,0) -- (1,1);
        
        \def\tick{0.012}
        
        \node[below] at (\s,0) {$\nicefrac{1}{s}$};
        \node[below] at (\q,0) {$\nicefrac{1}{q}$};
        \node[below] at (\p,0) {$\nicefrac{1}{p}$};
        \node[below] at (1,0) {$1$};
        \foreach \x in {\s,\q,\p,1}{
            \draw (\x,\tick) -- (\x,-\tick);
        }
        
        \node[left] at (-0.02,\s) {$\nicefrac{1}{s}$};
        \node[left] at (-0.02,\r) {$\nicefrac{1}{r}$};
        \node[left] at (-0.02,\q) {$\nicefrac{1}{q}$};
        \node[left] at (0,\p) {$\nicefrac{1}{p}$};
        \node[left] at (0,1) {$1$};
        \foreach \y in {\s,\r,\q,\p,1}{
            \draw (\tick,\y) -- (-\tick,\y);
        }
        
        \draw[->] 
        (0,0) -- (1.05,0) node[right] {$\theta$};
        \draw[->] (0,0) -- (0,1.05) node[above] {$\nicefrac{1}{\sigma}$};
    \end{tikzpicture}
    \caption{A $(\theta, \nicefrac{1}{\sigma})$-plane covering the exponents of the iteration scheme in Step~2 of the proof of Theorem~\ref{thm: p_=q_}. The line of exponents defined by $\nicefrac{1}{\sigma} = \nicefrac{1}{[p,s]_\theta}$ (\textcolor{color1}{golden}) and the auxiliary line $\nicefrac{1}{\sigma} = \nicefrac{\theta}{p}$ (\textcolor{color2}{violet}) intersect at some point $(\theta, \nicefrac{1}{r})$ (black) below the bisector $\theta = \nicefrac{1}{\sigma}$. The exponent $r$ belongs to $(p,2]$ by construction and to $(q_-(\H), 2]$ by step (5.)\ of the scheme. In particular, $r$ has the same properties as $s$ in step (1.) of the scheme and can serve as the starting point for the next iteration step.}
    \label{fig: figue}
\end{figure}

We initiate the iteration with $s_0 \coloneqq 2$ and once $s_n$ is defined for some $n \geq 0$, we set $s \coloneqq s_n$ and define $s_{n+1} \coloneqq r$ as above.
In this way, we find a decreasing sequence $(s_n)_{n\in\N}$ that converges to $p$ such that $(\lambda\D\E_\lambda)_{\lambda>0}$ is $\L^{s_n}$ bounded for every $n\in\N$. The claim follows.

\emph{Step 3: $p_-(\H) < 2_\star$}

By \cite[Lemma~2.10]{AEN2020} there exists $p \in (2,n+2)$ depending only on $n$, $M$ and $\nu$ such that $(\lambda \D \E_\lambda^\star)_{\lambda>0}$ is $\L^p$ bounded.\footnote{Since $(\lambda \D \E_\lambda^\star)_{\lambda>0}$ is a component of the operator matrix $(1+\ii \lambda P^*M)^{-1} - (1-\ii \lambda P^*M)^{-1}$ in \cite{AEN2020}.} Using the blueprint Lemma \ref{lem: 2_star} once more, we find that $(\E_\lambda^\star)_{\lambda>0}$ is $\L^{p}-\L^{p^\star}$ bounded. By duality, interpolation with $\L^2$ off-diagonal estimates and Lemma \ref{lem: Toolbox} \ref{item: Extrapolation} shows that $\left( \mathcal{E}_\lambda \right)_{\lambda>0}$ is $\L^q$ bounded for all $q \in ((p')_\star, 2]$ and therefore $p_-(\mathcal{H})\le (p')_\star < 2_\star$.
\end{proof}

\section{Boundedness of the parabolic Riesz transform}\label{S7}

In this section we prove our main results on parabolic Riesz transforms, Theorem~\ref{thm: Théorème principal}.

\subsection{Preliminaries on the functional calculus}
We recall from Theorem \ref{thm: théorie L2} that $\H$ is maximal accretive and hence sectorial with sectoriality angle at most $\frac{\pi}{2}$; in particular, it admits a holomorphic functional calculus, see, e.g., \cite{haase2006functional}. We need the following lemma for auxiliary families arising from the functional calculus. 

\begin{lem}[Functional calculus]\label{lem: funct calculus}
Let $p\in[1,2]$ and assume that $(\mathcal{E}_\lambda)_{\lambda >0}$ satisfies $\L^p$ off-diagonal estimates.
Fix $\alpha >0$ and $\beta \ge 0$ and define holomorphic functions on $\mathbb{C}\setminus (-\infty,0]$ by
\begin{align*}
\psi(z)\coloneqq z^{3\alpha}(1+z)^{-6\alpha}, \quad \varphi(z)\coloneqq (1-(1+z)^{-\beta})^{3\alpha}.
\end{align*}
Then there exists a constant $C$ such that for all $\lambda,r>0$, $u\in \L^p(\R^{n+1}) \cap \L^2(\R^{n+1})$ and all measurable sets $E,F\subseteq\mathbb{R}^{n+1}$ the following estimates hold:
\begin{enumerate}[(i)]
    \item $\displaystyle \norm{\one_F \psi(\lambda^2 \H) \varphi(r^2\H) \one_E u}_p \leq C \Bigl(1 + \frac{\d(E,F)}{\min(\lambda,r)}\Bigr)^{-6\alpha} \norm{\one_E u}_p$, \label{item: ODE for funct calc}
    \item $\displaystyle \norm{\psi(\lambda^2 \H)u} + \norm{\varphi(r^2\H) u}_p \leq C \|u\|_p$,
    \label{item: simple unif bound for funct calc}
    \item $\displaystyle \norm{\psi(\lambda^2 \H) \varphi(r^2\H) u}_p \leq C \min\Bigl(1, \, \left(\frac{r}{\lambda}\right)^{2\alpha} \Bigr) \norm{u}_p$. \label{item: boundedness for funct calc}
\end{enumerate}
In particular, these estimates hold if $p \in (p_-(\H), 2]$.
\end{lem}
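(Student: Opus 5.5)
The plan is to represent $\psi(\lambda^2\H)\varphi(r^2\H)$ by Cauchy integrals (equivalently, as combinations of resolvents) and transfer the off-diagonal decay of $(\E_\lambda)_{\lambda>0}$. This follows the standard argument in \cite[Chapter 4]{AEbook2023}, which never uses the specific structure of the elliptic operator. The first step is to extend the $\L^p$ off-diagonal estimates from $\lambda>0$ to complex $\lambda$ in a sector $\{|\arg \lambda|<\omega\}$ with $\omega$ slightly less than $\pi/2 - \omega_\H/2$. For this I would use the resolvent identity and the holomorphy of $\lambda\mapsto \E_\lambda$. Alternatively, one can show $\L^2$ off-diagonal estimates on a sector exactly as in Proposition~\ref{prop: classical ODEs}, since that proof via exponential test functions works for complex $\lambda$ in a small sector by accretivity, and then interpolate with $\L^p$ boundedness via Lemma~\ref{lem: Toolbox}. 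Since $(\E_\lambda)$ satisfies $\L^p$ off-diagonal estimates on the real axis by hypothesis, I would interpolate the sectorial $\L^2$ estimates with the $\L^{\tilde p}$ boundedness at a nearby exponent $\tilde p$. For the final sentence ($p\in(p_-(\H),2]$), I would take $\tilde p \in (p_-(\H),p)$ and use Lemma~\ref{lem: Toolbox}\ref{item: Interpolation principle}.

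Next, I would write
\[
\psi(\lambda^2\H)\varphi(r^2\H) = \frac{1}{2\pi \ii}\int_{\partial S_\mu} \psi(\lambda^2 z)\varphi(r^2 z)(z-\H)^{-1}\dd z
\]
with $\mu$ between the sectoriality angle and $\pi$. This is legitimate because $\psi\varphi$ decays like $|z|^{\min(3\alpha, \cdot)}$ at $0$ and like $|z|^{-3\alpha}$ at $\infty$. For $z=\zeta^{-2}$ on the contour, the resolvent $(z-\H)^{-1}$ equals $-\zeta^2\E_{\ii\zeta}$ up to rotation, so its off-diagonal estimate reads $\|\one_F (z-\H)^{-1}\one_E\|_{p\to p}\lesssim |z|^{-1}\e^{-c\,\d(E,F)|z|^{1/2}}$. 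Inserting this gives
\[
\int_0^\infty |\psi(\lambda^2 t)||\varphi(r^2t)| \e^{-c\d\sqrt t}\,\frac{\d t}{t}.
\]
I would bound $|\varphi(r^2 t)|\lesssim \min(1,(r^2t)^{3\alpha\cdot\min(1,\beta)})$; only the bound $|\varphi|\lesssim 1$ is needed for (i). I would also bound $|\psi(\lambda^2 t)|\lesssim \min((\lambda^2t)^{3\alpha},(\lambda^2 t)^{-3\alpha})$ and split the integral at $t=\d^{-2}$ and $t = \lambda^{-2}$.

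For (i), the region $t\ge \d^{-2}$ gives decay $\e^{-c}$ times polynomial factors. The region $t\le \d^{-2}$ contributes at most $\int_0^{\d^{-2}}(\lambda^2 t)^{3\alpha}\,\d t/t \sim (\lambda/\d)^{6\alpha}$. I expect the main obstacle to be getting the factor $(1+\d/\min(\lambda,r))^{-6\alpha}$ rather than only $(1+\d/\lambda)^{-6\alpha}$. In the regime $r<\lambda$ the small-$t$ decay of $\psi$ is not enough, so one must also exploit $\varphi$. Here I would expand $\varphi(r^2z) = \sum_k c_k (1+r^2z)^{-\beta k}$ binomially (finitely many terms if $3\alpha\in\N$, otherwise via the Cauchy integral in $r$) so that $\varphi(r^2\H)$ is a combination of powers of $\E_r$. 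Then $1-(1+r^2z)^{-\beta}$ vanishes like $r^2 z$ at $0$, and the integrand near $t\le \d^{-2}$ carries the factor $(r^2t)^{3\alpha}$ whenever $\beta\ge1$; for smaller $\beta$ I would adjust by using $|1-(1+w)^{-\beta}|\lesssim |w|$ for $|w|\le1$, valid for any $\beta>0$. The case $\beta=0$ is trivial, since then $\varphi\equiv0$. With this, the small-$t$ region is bounded by $(\min(\lambda,r)/\d)^{6\alpha}$ after combining both factors, since $|\psi\varphi|\lesssim (\lambda^2t)^{3\alpha}$ and also $\lesssim (\lambda^{2}t)^{3\alpha}$ times bounded, so one needs care. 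I would check this carefully, possibly replacing $\psi(\lambda^2 t)$ by $(\lambda^2t)^{-3\alpha}$ when $t>\lambda^{-2}$, which makes the region $\lambda^{-2}<t<\d^{-2}$ empty unless $\d<\lambda$.

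Part (ii) follows from (i) with $E=F=\R^{n+1}$, using the same integral with $\d=0$ (or, for $\varphi$, the binomial expansion into resolvent powers). For part (iii), the same contour integral with $\d=0$ gives
\[
\int_0^\infty \min\bigl((\lambda^2t)^{3\alpha},(\lambda^2t)^{-3\alpha}\bigr)\min\bigl(1,(r^2t)^{3\alpha}\bigr)\frac{\d t}{t}\lesssim \min\bigl(1,(r/\lambda)^{2\alpha}\bigr),
\]
which is an elementary computation split at $t=\lambda^{-2}$ and $t = r^{-2}$. It even yields the better power $(r/\lambda)^{6\alpha}$ up to logarithmic factors, so the stated bound holds with room to spare.
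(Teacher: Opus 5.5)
\textbf{Gap: extending the hypothesis to complex resolvents.} Your downstream argument is essentially the proof of \cite[Lemma 4.16]{AEbook2023}, which the paper simply invokes. That is, the Cauchy integral over $\partial S_\mu$, bounds $|z|^{-1}\e^{-c\,\d(E,F)|z|^{1/2}}$ for $(z-\H)^{-1}$ on the contour, and the scalar integral $\int_0^\infty|\psi(\lambda^2t)\varphi(r^2t)|\e^{-c\,\d\sqrt t}\,\frac{\d t}{t}$. It does close. Your worry about (i) is unfounded: in the regime $r<\d\le\lambda$ the integral is $\lesssim (r/\lambda)^{6\alpha}(1+\log(\lambda/\d))\lesssim (r/\d)^{6\alpha}$. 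The gap is in your first step, which is the only thing the paper actually has to prove. Everything downstream needs $\L^p$ bounds, with off-diagonal decay, for the complex resolvents $(1+\varrho^2\H)^{-1}$ with $\varrho$ in a sector. The hypothesis, however, only concerns $\varrho=\lambda>0$ and the single exponent $p$.

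\textbf{Why your proposed routes do not close it.} Your concrete route interpolates sectorial $\L^2$ off-diagonal estimates with $\L^{\tilde p}$ boundedness for some $\tilde p<p$. This presupposes $\L^{\tilde p}$ bounds for complex $\varrho$ at an exponent the hypothesis says nothing about.
\begin{itemize}
\item For $p=1$ no such $\tilde p$ exists, and this is exactly how the lemma is used in the proof of Theorem~\ref{thm: weak-type (1,1)}.
\item For $p\in(p_-(\H),2]$ you only know real-axis bounds at $\tilde p$, so the passage to complex $\varrho$ is still missing.
\end{itemize}
Your other suggestion, ``resolvent identity and holomorphy'', points in the right direction, but you do not say how it yields either bounds or decay. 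It also cannot give the aperture $\pi/2-\omega_\H/2$ you claim: from a single real-axis $\L^p$ bound one only gets a small sector, whose size depends on that bound.

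\textbf{The missing argument.} It is a perturbation around the real axis. With $\lambda=|\varrho|$ one has $1+\varrho^2\H=\E_\lambda^{-1}\bigl(1+(\varrho^2/\lambda^2-1)(1-\E_\lambda)\bigr)$. Hence
\[
(1+\varrho^2\H)^{-1}=\sum_{k\ge0}(-1)^k\Bigl(\frac{\varrho^2}{\lambda^2}-1\Bigr)^k(1-\E_\lambda)^k\E_\lambda ,
\]
which converges in $\L^p$ operator norm as soon as $|\varrho^2/\lambda^2-1|<1/(2(1+C))$, where $C$ is the uniform $\L^p$ bound of $(\E_\lambda)_{\lambda>0}$. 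This gives uniform $\L^p$ bounds on a small sector $|\arg\varrho|<\mu$. Next apply Stein interpolation, cf.\ \cite[Lemma 4.13]{AEbook2023}: this is a two-constants argument for $\varrho\mapsto\langle\one_F(1+\varrho^2\H)^{-1}\one_Eu,v\rangle$, which is bounded on the sector and exponentially small on $(0,\infty)$. It yields $\L^p$ off-diagonal estimates on a narrower sector. Accordingly, your contour must be $\partial S_{\mu'}$ with $\mu'$ close to $\pi$, rather than just above the $\L^2$ angle of $\H$. With that change your computations go through.
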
 

\begin{proof}
Part \ref{item: simple unif bound for funct calc} is a general principle, see, e.g., \cite[Proposition 2.6.11]{haase2006functional}. The uniform bound in \ref{item: boundedness for funct calc} follows from that. For the 'in particular part' we note that if $p \in (p_-(\H),2]$, then $\L^p$ off-diagonal estimates for $(\mathcal{E}_\lambda)_{\lambda >0}$ follow by interpolation between $\L^2$ off-diagonal estimates (Proposition~\ref{prop: classical ODEs}) and $\L^r$ boundedness for some $r < p$ (Lemma \ref{lem: Toolbox} \ref{item: Interpolation principle}). 

The remaining assertions have been obtained in the proof of \cite[Lemma 4.16]{AEbook2023} under the assumption that $((1+\varrho^2 \H)^{-1})_{\varrho \in \mathcal{U}}$ satisfies $\L^p$ off-diagonal estimates on some sector $\mathcal{U} \coloneqq \{z \in \C \setminus \{0\}: |\arg(z)| < \mu\}$ with $\mu > 0$. Thus, our task is to prove that this property follows from our weaker assumption. 

To this end, let $\lambda > 0$ and $\varrho \in \C \setminus \{0\}$. Writing 
\begin{align*}
    1+\varrho^2 \H 
    = \E_\lambda^{-1} + \Bigl(\frac{\varrho^2}{\lambda^2} -1\Bigr) \lambda^2\H
    = \E_\lambda^{-1} \biggl(1+  \Bigl(\frac{\varrho^2}{\lambda^2} -1\Bigr) (1-\E_\lambda) \biggr),
\end{align*}
we find by a Neumann series that
\begin{align}
\label{eq: Neumann series}
 (1+\varrho^2 \H)^{-1} = \sum_{k=0}^\infty (-1)^k\Bigl(\frac{\varrho^2}{\lambda^2} -1\Bigr)^k (1-\E_\lambda)^k \E_\lambda
\end{align}
provided the series converges in operator norm. By $m$-accretivity, we have $\|\E_\lambda\|_{2 \to 2} \leq 1$ and hence the series converges in $\L^2$ operator norm whenever $|\nicefrac{\varrho^2}{\lambda^2} - 1| < \nicefrac{1}{2}$. Let now $C \geq 1$ be such that $\|\E_\lambda f\|_p \leq C \|f\|_p$ for all $\lambda>0$ and all $f \in \L^p(\R^{n+1}) \cap \L^2(\R^{n+1})$. Such a constant exists since $\L^p$ off-diagonal estimates imply $\L^p$ boundedness. Under the stronger assumption $|\nicefrac{\varrho^2}{\lambda^2} - 1| < \nicefrac{1}{2(1+C)}$ we obtain from \eqref{eq: Neumann series} that
\begin{align*}
    \| (1+\varrho^2 \H)^{-1}f\|_p \leq \sum_{k=0}^\infty \biggl(\frac{1+C}{2(1+C)}\biggr)^k C \|f\|_p  = 2 C \|f\|_p.
\end{align*}
If we take $\mu \in (0, \nicefrac{\pi}{2})$ such that $|\e^{2 \ii \mu} - 1| < \nicefrac{1}{2(1+C)}$, then the above applies to $\varrho = \lambda \e^{\ii \theta}$ for every $\theta \in (-\mu,\mu)$, thereby showing that $((1+\varrho^2 \H)^{-1})_{\varrho \in \mathcal{U}}$ is $\L^p$ bounded. Stein interpolation with $\L^p$ off-diagonal estimates for $\varrho \in (0,\infty)$ yields the required $\L^p$ off-diagonal estimates on a slightly narrower sector, see, e.g., \cite[Lemma 4.13]{AEbook2023}.
\end{proof}

\subsection{Sufficient condition} 

The sufficient condition in Theorem \ref{thm: Théorème principal} is established by the following theorem.

\begin{thm}\label{thm: Riesz sufficient}
The Riesz transform $\mathcal{R}_\H$ is $\L^p$ bounded for all $p\in (p_-(\mathcal{H}),2]$.
\end{thm}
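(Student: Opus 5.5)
The plan is to apply the Blunck--Kunstmann criterion, Theorem~\ref{thm: BK}, to $T=\RH$ iteratively, as announced in the introduction. Fix $p\in(p_-(\H),2]$ and choose $p_0\in(p_-(\H),p)$. By Theorem~\ref{thm: p_=q_}, both $(\E_\lambda)_{\lambda>0}$ and $(\lambda\D\E_\lambda)_{\lambda>0}$ are $\L^{p_0}$ bounded. Let $q\in(p,2]$ be an exponent for which $\RH$ is already known to be bounded on $\L^q$; we start with $q=2$ (Theorem~\ref{thm: théorie L2}). Each step of the iteration goes as follows. Given a parabolic cube $\Delta$ of radius $r$ and $u\in\Z$ supported in $\Delta$, we decompose $u=v+w$ with $w\coloneqq\psi$-type regularisation. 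Concretely, take $w\coloneqq\varphi(r^2\H)u$ and $v\coloneqq(1-\varphi(r^2\H))u$, where $\varphi(z)=(1-(1+z)^{-1})^{m}$. Then $v=\sum_k c_k\E_r^k u$ is a finite combination of resolvent powers.

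For \eqref{eq: BK2}, write $v=\sum_k c_k\E_r^k u$ and use the $\L^{\rho}-\L^q$ boundedness of high powers $\E_r^k$ from some $\rho<p$. This follows from interpolating Lemma~\ref{lem: 2_star} with $\L^{p_0}$ bounds and applying Lemma~\ref{lem: gives m}, exactly as in Step~2 of Theorem~\ref{thm: p_=q_}. Interpolating with the exponential $\L^2$ off-diagonal bounds of Proposition~\ref{prop: classical ODEs} then gives $\|\one_{C_j^N(\Delta)}v\|_q\le C\e^{-c2^j}r^{-\gamma_{p,q}}\|u\|_p$. Since $\d(C_j^N(\Delta),\Delta)\ge 2^{j-1}r$, this decay beats any power $(2^nN^2)^{j/q'}$, so \eqref{eq: BK2} holds with $\varepsilon_j$ summable.

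For \eqref{eq: BK1}, use the Calder\'on reproducing formula \eqref{eq:intro_kernel}. This writes $\RH\varphi(r^2\H)u=\frac{2}{\pi}\int_0^\infty\lambda\D\E_\lambda\varphi(r^2\H)u\,\frac{\d\lambda}{\lambda}$. Split the integrand as $\lambda\D\E_\lambda^{M}$ composed with $\psi(\lambda^2\H)\varphi(r^2\H)$-type factors, and localise with Lemma~\ref{lem: funct calculus}. For $\lambda\le r$, use Theorem~\ref{thm: parabolicODEs}\,\ref{item: cube to annulus} in $\L^q$ (note $q>p_0$) to get the factor $(\lambda/r)N^{-j\varepsilon}$ with $\varepsilon=1+\frac{1}{1+q'}$; the factor $\lambda/r$ makes the $\d\lambda/\lambda$ integral converge at $0$. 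For $\lambda\ge r$, the factor $(r/\lambda)^{2\alpha}$ from Lemma~\ref{lem: funct calculus}\,\ref{item: boundedness for funct calc} (with $\alpha$ large) compensates the $(\lambda/r)^{4N}$ growth. Additionally, an $\L^p-\L^q$ factor from a resolvent power produces $r^{-\gamma_{p,q}}$. This yields $\varepsilon_j\lesssim N^{-j\varepsilon}$ up to the Sobolev gap.

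Summability in Theorem~\ref{thm: BK} requires $N^{-j\varepsilon}(2^nN^2)^{j/q'}$ to be summable, that is, $\varepsilon>2/q'$ once $N\ge2^n$ is large. The localisation also costs a factor $N^{j\cdot 2(1/p-1/q)}$-type loss, coming from passing $\|\one_E u\|$ through $\L^p\to\L^q$ on enlarged sets. Since $\varepsilon-2/q'=1+\frac{1}{1+q'}-\frac{2}{q'}$ is a fixed positive margin, this is acceptable whenever $\frac1p-\frac1q\le\delta$ for some $\delta>0$ independent of $q$. Thus Theorem~\ref{thm: BK} gives weak type $(p_1,p_1)$ and hence strong $\L^s$ bounds for $s\in(p_1,q]$, with $1/p_1=\min(1/p_0,1/q+\delta)$. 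Restarting from $q\coloneqq$ any such $s$ reaches $p$ in finitely many steps.

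I expect the main obstacle to be the bookkeeping in \eqref{eq: BK1}. Two things must be checked there: the large-$\lambda$ regime, where the polynomial factor $(\lambda/r)^{4N}$ must be absorbed uniformly in $N$; and the verification that the decay exponent beats the volume growth $(2^nN^2)^{j/q'}$ with a gap independent of $q$. If the time decay of Theorem~\ref{thm: parabolicODEs} only barely suffices, I would first try enlarging $N$ and using the sharper stretched-cube version, Remark~\ref{rem: epsilon2}, to shave the volume factor.
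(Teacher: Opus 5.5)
Your architecture matches the paper's: iterated Theorem~\ref{thm: BK} from $q=2$, $w=\varphi(r^2\H)u$, decaying resolvent powers in $v$, a reproducing formula with $\psi(\lambda^2\H)$ as in \eqref{eq: Riesz transform as kernel operator}, Theorem~\ref{thm: parabolicODEs}\,\ref{item: cube to annulus} in $\L^q$, then $N$ and $\alpha$ large. However, the small-$\lambda$ bookkeeping in \eqref{eq: BK1} is wrong, and with it your explanation of why one step does not suffice.

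The factor $\varphi(r^2\H)$ is a high-pass factor with no smoothing. So the $\L^p\to\L^q$ gain must come from $\E_\lambda^\beta$ (or $\psi(\lambda^2\H)$) at scale $\lambda$, and it produces $\lambda^{-\gamma_{p,q}}$, not $r^{-\gamma_{p,q}}$. Theorem~\ref{thm: parabolicODEs} only gives the linear factor $\lambda/r$ for $\lambda\le r$, so this range contributes
\[
  N^{-j\varepsilon}\int_0^r \frac{\lambda}{r}\,\lambda^{-\gamma_{p,q}}\,\frac{\d\lambda}{\lambda}\;\|u\|_p .
\]
This is finite (and then of the right size $r^{-\gamma_{p,q}}$) if and only if $\gamma_{p,q}<1$, i.e.\ $p>q_\star$. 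That is the true per-step restriction, and the reason the iteration is needed at all.

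The ``$N^{2j(1/p-1/q)}$ localisation loss'' you invoke does not occur. The off-diagonal bound is applied in $\L^q$, and $\|\one_E g\|_q\le\|g\|_q$ costs nothing. Because your $\delta$ is calibrated only against the margin $1+\frac{1}{1+q'}-\frac{2}{q'}\ge\frac13$, nothing forces $\delta<\frac{1}{n+2}$, and for larger steps the integral above diverges. The fix is to extrapolate in each step only to $p\in(q_\star,q)\cap(p_-(\H),q)$.

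There are also some smaller points.
\begin{itemize}
\item With $\varphi(z)=(1-(1+z)^{-1})^m$, the expansion of $v$ contains $\E_r$ itself, and your triangle-interpolation argument for high powers does not cover it. Take $\varphi(z)=(1-(1+z)^{-\beta})^{3\alpha}$ with $\beta$ from Lemma~\ref{lem: beta extrapolation}, so that only $\E_r^{k\beta}$, $k\ge1$, occur.
\item Theorem~\ref{thm: parabolicODEs}\,\ref{item: cube to annulus} needs input supported in $\Delta_r$, and $\E_\lambda^\beta\psi(\lambda^2\H)\varphi(r^2\H)u$ is not. The paper inserts $\one_E+\one_{E^{\c}}$ with the stretched set $E=\Delta_r^{j-3/2,\,j/4}$ of Remark~\ref{rem: epsilon2}. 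On $\one_E$ it uses the stretched off-diagonal bound. On $\one_{E^{\c}}$ it uses the exponential $\L^p-\L^q$ off-diagonal bounds of $\E_\lambda^\beta$ together with Lemma~\ref{lem: funct calculus}\,\ref{item: ODE for funct calc}, which yields decay $2^{-j(\gamma_{p,q}+\alpha)}$.
\item Remark~\ref{rem: epsilon2} is therefore an essential part of the localisation, not a fallback for shaving volume factors.
\item No uniformity in $N$ is needed for large $\lambda$: $N$ is fixed first, and then $\alpha\ge 2N+1$.
\end{itemize}
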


For the proof, we work with the following `kernel' representation of the Riesz transform as an improper $\L^2$-valued integral:
\begin{align}
\label{eq: Riesz transform as kernel operator}
    \mathcal{R}_\H w = c_\alpha \int_0^{\infty} \lambda \mathbb{D}\E_\lambda^{3\alpha} \psi(\lambda^2\H) w \, \frac{\d \lambda}{\lambda}, \qquad w \in \L^2(\R^{n+1}).
\end{align}
Here, $\psi$ is as in Lemma~\ref{lem: funct calculus} and the parameter $\alpha>0$ implicit in its definition is still at our disposal and $c_\alpha>0$ is a constant. One of the key arguments in our proof will be to take $\alpha$ large, as this will help with the decay of the kernel. The representation \eqref{eq: Riesz transform as kernel operator} is obtained from the Calder\'on reproducing formula 
\begin{align*}
        w = c_\alpha \int_0^{\infty} (\lambda^2 \H)^{1/2}(1+\lambda^2 \H)^{-3\alpha}(\lambda^2 \H)^{3\alpha}(1+\lambda^2 \H)^{-6\alpha} w \, \frac{\d\lambda}{\lambda},
\end{align*}
see, e.g., \cite[Theorem 6.16]{egert2024harmonic} by applying the $\L^2$ bounded operator $\mathcal{R}_\H$ to both sides.

For $p$ as in our main theorem, we have the following off-diagonal bounds for (parts of) the kernel.

\begin{lem}\label{lem: beta extrapolation}
For all $p\in (p_-(\mathcal{H}),2]$, there exists an integer $\beta\ge1$ such that $(\E^\beta_\lambda)_{\lambda>0}$ satisfies $\L^p-\L^2$ off-diagonal estimates.
\end{lem}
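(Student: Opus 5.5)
The plan is to combine triangle interpolation with the $\L^2$ off-diagonal estimates. Fix $p \in (p_-(\H),2]$; if $p=2$ we take $\beta=1$ and the claim is Proposition~\ref{prop: classical ODEs}, so assume $p<2$.

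\emph{Step 1: boundedness.} Choose $p_0 \in (p_-(\H),p)$, so that $(\E_\lambda)_{\lambda>0}$ is $\L^{p_0}$ bounded. By Lemma~\ref{lem: 2_star} it is $\L^{2_\star}-\L^2$ bounded, and it is $\L^2$ bounded by $m$-accretivity. Lemma~\ref{lem: gives m} with $q=2$, $\varrho = 2_\star$ and the $\L^{p_0}$ bound then gives, for every $r \in (p_0,2]$, an integer $m$ such that $(\E_\lambda^m)_{\lambda>0}$ is $\L^r-\L^2$ bounded. We apply this with some $r \in (p_0,p)$ and obtain $m$ with $(\E^m_\lambda)$ being $\L^r-\L^2$ bounded.

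\emph{Step 2: off-diagonal decay.} The family $(\E_\lambda^m)_{\lambda>0}$ satisfies $\L^2$ off-diagonal estimates, by composing Proposition~\ref{prop: classical ODEs} with itself (Lemma~\ref{lem: Toolbox}~\ref{item: Composition}, off-diagonal version). Interpolation~2 (Lemma~\ref{lem: Toolbox}~\ref{item: Interpolation principle}) between the $\L^r-\L^2$ boundedness and the $\L^2-\L^2$ off-diagonal estimates yields $\L^{[r,2]_\theta}-\L^2$ off-diagonal estimates for all $\theta\in(0,1)$. Choosing $\theta$ with $[r,2]_\theta = p$, which is possible since $r<p<2$, shows that $\beta = m$ works.

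\emph{Main obstacle.} The only delicate point is ensuring that the interpolation index lands exactly at $p$ with $\theta$ strictly inside $(0,1)$, which is why we needed the boundedness at some $r<p$ rather than at $p$ itself. The scaling factor $\lambda^{-\gamma_{p,2}}$ is handled automatically by the interpolation lemma. A fallback, if one prefers to avoid Interpolation~2, would be to interpolate $\L^{p_0}$ boundedness of $\E_\lambda$ with the $\L^2$ off-diagonal estimates to get $\L^r$ off-diagonal estimates, and then compose with the $\L^r-\L^2$ bound of a further power $\E_\lambda^{m}$. However, composition requires off-diagonal estimates for both factors, so the route above is cleaner.
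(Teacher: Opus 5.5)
Your proposal is correct and follows the paper's proof essentially verbatim. Your $p_0<r$ in $(p_-(\H),p)$ play the roles of the paper's $\varrho_1<\varrho_2$, triangle interpolation (Lemma~\ref{lem: gives m} with $\varrho=2_\star$) produces the power $\beta=m$, and Interpolation~2 with the composed $\L^2$ off-diagonal estimates finishes; your separate treatment of $p=2$, where $\theta$ would be forced to equal $1$, is a small point the paper leaves implicit.
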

\begin{proof}
We fix $\varrho_1, \varrho_2 \in (p_-(\mathcal{H}), p)$ such that $\varrho_1 < \varrho_2$. Since $(\E_\lambda)_{\lambda>0}$ is $\L^2$, $\L^{\varrho_1}$, and $\L^{2_\star}-\L^2$ bounded, it follows from Lemma~\ref{lem: gives m} that there exists an integer $\beta \ge 1$ such that $(\E^{\beta}_\lambda)_{\lambda>0}$ is $\L^{\varrho_2}-\L^2$ bounded. The claim follows by interpolation with the $\L^2$ off-diagonal estimates for $(\E^{\beta}_\lambda)_{\lambda>0}$ (Proposition \ref{prop: classical ODEs} and Lemma \ref{lem: Toolbox} \ref{item: Interpolation principle}).
\end{proof}

\begin{proof}[Proof of Theorem \ref{thm: Riesz sufficient}]
    We shall employ an iteration scheme. Namely, we will show that whenever $q\in(p_{-}(\H),2]$ is such that $\RH$ is $\L^q$ bounded, then $\RH$ is $\L^p$ bounded for all $p \in (p_{-}(\H), q) \cap (q_\star, q)$. In this way, $\L^2$ boundedness of $\RH$ yields $\L^p$ boundedness for all $p \in (2_\star,2]$ and we obtain $\L^p$ boundedness for every exponent $p \in (p_{-}(\H),2_\star]$ after finitely many iteration steps. 

    For the rest of the proof, we pick $q$ and $p$ as above. By definition, we have that the resolvents $(\E_\lambda)_{\lambda>0}$ are $\L^p$ bounded and Theorem~\ref{thm: p_=q_} yields that so are their gradients $(\lambda \D \E_\lambda)_{\lambda>0}$.

    We shall verify the assumptions of the Blunck--Kunstmann extrapolation theorem, Theorem~\ref{thm: BK}. To this end, we fix $r>0$ and $u \in \Z$ with support in a parabolic cube $\Delta_r$ and we let $N\in\N$ subject to further specification in the bulk of the proof. The argument comes in three steps.
	
	\emph{Step 1: Decomposing $u$.}
	
    By Lemma \ref{lem: beta extrapolation}, there exists an integer $\beta\ge1$ such that $(\E^\beta_\lambda)_{\lambda>0}$ satisfies $\L^{p}-\L^2$ off-diagonal estimates. By interpolation with $\L^{p}$ boundedness, the family $(\E^\beta_\lambda)_{\lambda>0}$ satisfies $\L^{p}-\L^q$ off-diagonal estimates.
    
    For an integer $\alpha$ to be chosen later and the above choice of $\beta$ we define
    \begin{align*}
        \varphi(z)\coloneqq\big ( 1-(1+z)^{-\beta} \big )^{3\alpha}, \quad z \in \C\setminus \left\{-1\right\}
    \end{align*}
    and decompose $u$ via
    \begin{align*}
        u = (1-\varphi(r^2\H)) u + \varphi(r^2\H) u \eqqcolon v + w.
    \end{align*}
    Since $p_{-}(\H) < q \leq 2$ and $u \in \Z \subseteq \L^q(\R^{n+1}) \cap \L^2(\R^{n+1})$, we have $v,w \in \L^q(\R^{n+1}) \cap \L^2(\R^{n+1})$.
    
    \emph{Step 2: Verification of \eqref{eq: BK2}.}
    
    Expanding $v$ yields
    \begin{align*}
        v = u -\sum_{k=0}^{3\alpha} \binom{3\alpha}{k} (-1)^k \E^{k\beta}_{r} u = -\sum_{k=1}^{3\alpha} \binom{3\alpha}{k} (-1)^k \E^{k\beta}_{r} u.
    \end{align*}
   Each $\E^{k\beta}_{r}$, $k \geq 1$, satisfies $\L^p-\L^q$ off-diagonal estimates. Indeed, for $k = 1$ this is due to our choice of $\beta$ in Step~1, and for $k \ge 2$, we have $\E^{k\beta}_{r} = \E^{(k-1)\beta}_{r} \E_r^\beta$, where $\E^{(k-1)\beta}_{r}$ satisfies $\L^{q}$ off-diagonal estimates by interpolating $\L^2$ off-diagonal estimates with $\L^p$ boundedness. Thus, for all $j\ge 1$ we have
    \begin{align*}
        \norm{\one_{C^N_j(\Delta_r)}v}_q &\leq \sum_{k=1}^{3\alpha} \binom{3\alpha}{k} \norm{\one_{C^N_j(\Delta_r)} \E^{k\beta}_{r} u}_q
        \\&\leq C \sum_{k=1}^{3\alpha} \binom{3\alpha}{k} \e^{-c\frac{2^{j-1}r}{r}} r^{-\gamma_{p,q}} \norm{u}_{p}
        \\& \leq C \e^{-c2^{j-1}} r^{-\gamma_{p,q}} \norm{u}_{p},
    \end{align*}
    where $C$ and $c$ depend on $p, q, \alpha, \beta, \nu, M$. This is \eqref{eq: BK2} with $\eps_j \coloneqq C \e^{-c2^{j-1}}$ and $\sum_{j \ge 1} \eps_j \big( 2^n N^2 \big)^{\frac{j}{q'}}$ is finite, no matter the choice of $N$.

    \emph{Step 3: Verification of \eqref{eq: BK1}.}
    
    From this point on we impose the condition $3\alpha \ge \beta+1$ and write \eqref{eq: Riesz transform as kernel operator} as
    \begin{align*}
        \RH w & = c_\alpha \int_{0}^{\infty} \lambda \mathbb{D}\E_\lambda^{3\alpha-\beta} \E^{\beta}_\lambda \psi(\lambda^2\H) \varphi(r^2\H) u \, \frac{\d \lambda}{\lambda}
        \\&= c_\alpha \int_{0}^{\infty} \lambda \mathbb{D}\E_\lambda^{3\alpha-\beta} \bigl( \one_{E}+\one_{E^\c} \bigr) \E^{\beta}_\lambda \psi(\lambda^2\H) \varphi(r^2\H) u \, \frac{\d \lambda}{\lambda},
    \end{align*}
    where $E \coloneqq \Delta^{j-\nicefrac{3}{2},\nicefrac{j}{4}}_r$ is as in Remark \ref{rem: epsilon2}. Thus, for all $j \ge 2$ we have
    \begin{align*}
        \norm{\one_{C^N_j(\Delta_r)} \RH w}_q &\le c_\alpha \int_{0}^{\infty} \norm{\one_{C^N_j(\Delta_r)}\lambda \mathbb{D}\E_\lambda^{3\alpha-\beta} \one_{E} \E^{\beta}_\lambda \psi(\lambda^2\H)\varphi(r^2\H) u}_q \, \frac{\d \lambda}{\lambda} \\
        & \qquad + c_\alpha \int_{0}^{\infty} \norm{\one_{C^N_j(\Delta_r)}\lambda \mathbb{D}\E_\lambda^{3\alpha-\beta} \one_{E^\c} \E^{\beta}_\lambda \psi(\lambda^2\H)\varphi(r^2\H) u}_q \, \frac{\d \lambda}{\lambda} \\
        &\eqqcolon \mathrm{I}+\mathrm{II}.
    \end{align*}
    
    In order to estimate $\mathrm{I}$, we use in sequence: First, $\L^q$ off-diagonal estimates for the gradients as in Theorem \ref{thm: parabolicODEs} \ref{item: cube to annulus} (see also Remark \ref{rem: epsilon2}), second, $\L^p-\L^q$ boundedness for the $\beta$th powers of the resolvents (see Step~1), and third Lemma \ref{lem: funct calculus} \ref{item: boundedness for funct calc}. This yields
    \begin{align}
    \label{eq: THE ONE}
    \begin{split}
        \mathrm{I} &\leq C N^{-j\bigl(1+\frac{1}{1+q'}\bigr)} \int_{0}^{\infty} \bigg(\frac{\lambda}{r} +\Big(\frac{\lambda}{r}\Big)^{4N}\bigg)\| \E^{\beta}_\lambda \psi(\lambda^2\H)\varphi(r^2\H) u\|_{q} \, \frac{\d \lambda}{\lambda}
        \\& \leq C N^{-j\bigl(1+\frac{1}{1+q'}\bigr)} \int_{0}^{\infty} \bigg(\frac{\lambda}{r} +\Big(\frac{\lambda}{r}\Big)^{4N}\bigg) \lambda^{-\gamma_{p,q}} \| \psi(\lambda^2\H)\varphi(r^2\H) u\|_{p} \, \frac{\d \lambda}{\lambda}
        \\& \leq C N^{-j\bigl(1+\frac{1}{1+q'}\bigr)} \int_{0}^{\infty} \bigg(\frac{\lambda}{r} +\Big(\frac{\lambda}{r}\Big)^{4N}\bigg) \lambda^{-\gamma_{p,q}} \min\Big(1,\left(\frac{r}{\lambda} \right)^{2\alpha}\Big)\norm{u}_{p} \, \frac{\d \lambda}{\lambda}
        \\& \leq C N^{-j\bigl(1+\frac{1}{1+q'}\bigr)} \bigg (  \int_{0}^{\infty} (\lambda +\lambda^{4N}) \lambda^{-\gamma_{p,q}} \min(1,\lambda^{-2\alpha}) \, \frac{\d \lambda}{\lambda} \bigg ) r^{-\gamma_{p,q}} \norm{u}_{p},
    \end{split}
    \end{align}
    where $C$ now also depends on $N$ but not on $j$.
    
    As for $\mathrm{II}$, we set $F \coloneqq \Delta^{j-\nicefrac{3}{4},\nicefrac{1}{8}}_r$, and use that the resolvents and their gradients are $\L^q$ bounded in order to obtain
    \begin{align}
    \begin{split}
        \label{eq: THE TWO}
        \mathrm{II} &\leq \int_{0}^{\infty} \norm{\one_{E^\c} \E^{\beta}_\lambda \left( \one_{F}+\one_{F^\c} \right) \psi(\lambda^2\H)\varphi(r^2\H) u}_q \, \frac{\d \lambda}{\lambda}
        \\& \leq \int_{0}^{\infty} \norm{\one_{E^\c} \E^{\beta}_\lambda \one_{F}\psi(\lambda^2\H)\varphi(r^2\H) u}_q \, \frac{\d \lambda}{\lambda}
        \\& \qquad+ \int_{0}^{\infty} \norm{\one_{E^\c} \E^{\beta}_\lambda \one_{F^\c}  \psi(\lambda^2\H)\varphi(r^2\H) u}_q \, \frac{\d \lambda}{\lambda}
        \\&\eqqcolon\mathrm{II}_1+\mathrm{II}_2.
    \end{split}
    \end{align}
    We continue to estimate $\mathrm{II}_1$ and $\mathrm{II}_2$ separately.
    
    Starting with $\mathrm{II}_1$, we require from now on that $N\geq 2^8$. Now, $\L^p-\L^q$ off-diagonal estimates and the $\L^p$ bounds for the functional calculus families in Lemma~\ref{lem: funct calculus} \ref{item: boundedness for funct calc} yield
    \begin{align}\label{eq: THE TWO one}
    \begin{split}
        \mathrm{II}_1 &\leq C \int_{0}^{\infty} \e^{-c\frac{2^jr}{\lambda}} \lambda^{-\gamma_{p,q}} \norm{\psi(\lambda^2\H)\varphi(r^2\H) u}_p \, \frac{\d \lambda}{\lambda}
        \\&\leq C \int_{0}^{\infty} \e^{-c\frac{2^jr}{\lambda}} \lambda^{-\gamma_{p,q}} \min\Big(1,\left(\frac{r}{\lambda} \right)^{2\alpha}\Big)\norm{ u}_p \, \frac{\d \lambda}{\lambda}
        \\& \leq C 2^{-j\left(\gamma_{p,q}+2\alpha\right)} \left (  \int_{0}^{\infty} \e^{-\frac{c}{\lambda}} \lambda^{-(\gamma_{p,q}+2\alpha)}  \, \frac{\d \lambda}{\lambda} \right ) r^{-\gamma_{p,q}} \norm{ u}_p.
    \end{split}
    \end{align}
    For $\mathrm{II}_2$, we first use $\L^p-\L^q$ boundedness to give
    \begin{align*}
        \mathrm{II}_2 \leq C \int_{0}^{\infty} \lambda^{-\gamma_{p,q}} \norm{\one_{F^\c}\psi(\lambda^2\H)\varphi(r^2\H) u}_p \, \frac{\d \lambda}{\lambda}.
    \end{align*}
    By Lemma~\ref{lem: funct calculus}~\ref{item: ODE for funct calc} we have that
    \begin{align*}
        \norm{\one_{F^\c}\psi(\lambda^2\H)\varphi(r^2\H) u}_p \leq C 
            \left(1+c\frac{2^{j}r}{\lambda} \right)^{-6\alpha} \norm{u}_p 
    \end{align*}
    where $c$ now also depends on $N$. Taking a geometric mean with the bound provided by Lemma~\ref{lem: funct calculus}~\ref{item: boundedness for funct calc}, we conclude 
    \begin{align*}
        \norm{\one_{F^\c}\psi(\lambda^2\H)\varphi(r^2\H) u}_p \leq \left(\frac{r}{\lambda}\right)^{\alpha} \bigg(1+c\frac{2^{j}r}{\lambda} \bigg)^{-3\alpha} \norm{u}_p.
    \end{align*}
    Altogether, we obtain
    \begin{align}
    \label{eq: THE TWO two}
    \begin{split}
        \mathrm{II}_2 &\leq C \bigg( \int_{0}^{\infty}  \lambda^{-\gamma_{p,q}} \left(\frac{r}{\lambda}\right)^{\alpha} \bigg(1+c\frac{2^{j}r}{\lambda} \bigg)^{-3\alpha}  \, \frac{\d \lambda}{\lambda} \bigg) \norm{u}_p
        \\&\leq C 2^{-j\left( \gamma_{p,q}+\alpha \right)} \bigg( \int_{0}^{\infty}  \frac{\lambda^{\alpha+\gamma_{p,q}}}{\left(1+c\lambda\right)^{3\alpha}}  \, \frac{\d \lambda}{\lambda} \bigg) r^{-\gamma_{p,q}} \norm{u}_p.
    \end{split}
    \end{align}
    Collecting the estimates \eqref{eq: THE ONE} - \eqref{eq: THE TWO two}, we obtain
    \eqref{eq: BK1} with $\eps_j \coloneqq C(N^{-j(1+\frac{1}{1+q'})} + 2^{-j(\gamma_{p,q}+\alpha)})$, where $C$ contains the various remaining numerical integrals in $\lambda$ and guaranteeing that $C$ is actually finite is one of our remaining tasks. This will be possible by taking $\alpha$ and $N$ even larger. 
    
    Indeed, finiteness of $C$ requires that the three integrals 
    \begin{align}
    \label{eq: THE INTEGRALS}
    \begin{split}
        &\int_{0}^{\infty} \big(\lambda + \lambda^{4N}\big) \lambda^{-\gamma_{p,q}} \min\big(1, \lambda^{-2\alpha}\big) \, \frac{\d \lambda}{\lambda}, \qquad 
        \int_{0}^{\infty} \e^{-\frac{c}{\lambda}} \lambda^{-(\gamma_{p,q}+2\alpha)}  \, \frac{\d \lambda}{\lambda} \\
        &\text{and} \qquad \int_{0}^{\infty}  \frac{\lambda^{\alpha+\gamma_{p,q}}}{\left(1+c\lambda\right)^{3\alpha}}  \, \frac{\d \lambda}{\lambda}
    \end{split}
    \end{align}
    are finite, which can all be accomplished by picking $\alpha \geq 2N +1$ large enough and using that the first integral converges at $0$ since $p\in (q_\star,q)$ and therefore
    \begin{align*}
        - \gamma_{p,q} > - \gamma_{q_\star,q} = -1.
    \end{align*}
    With $C$ being finite, summability as required in Theorem~\ref{thm: BK} means that the series
    \begin{align}
    \label{eq: THE SERIES}
    \begin{split}
        \sum_{j\geq1} N^{-j\bigl(1+\frac{1}{1+q'}\bigr)} (2^nN^2)^{\nicefrac{j}{q'}}\qquad \text{and} \qquad \sum_{j\geq1} 2^{-j\left( \gamma_{p,q} + \alpha \right)} (2^nN^2)^{\nicefrac{j}{q'}}
    \end{split}
    \end{align}
    need to converge. The first one converges by picking $N$ large enough since $1+\nicefrac{1}{1+q'} > \nicefrac{2}{q'}$. The second one converges by picking $\alpha$ large once $N$ is fixed.
\end{proof}

\subsection{Necessary condition}

The necessary condition in Theorem \ref{thm: Théorème principal} is established by the following proposition.

\begin{prop}\label{prop: Necessary}
    If $p\in (1,2)$ is such that the Riesz transform $\mathcal{R}_\H$ is $\L^p$ bounded, then $p\ge p_-(\mathcal{H})$.
\end{prop}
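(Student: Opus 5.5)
The plan is to show that $\L^p$ boundedness of $\RH$ forces $\L^{\sigma}$ boundedness of $(\E_\lambda)_{\lambda>0}$ for exponents $\sigma$ arbitrarily close to $p$ from above. This yields $p_-(\H)\le p$. The mechanism is the blueprint of Lemma~\ref{lem: 2_star} and Step~1 of Theorem~\ref{thm: p_=q_}: an $\L^p$ gradient bound, combined with the parabolic Sobolev embedding, gives $\L^p-\L^{p^\star}$ bounds for the resolvents. Interpolation with the $\L^2$ off-diagonal estimates of Proposition~\ref{prop: classical ODEs}, followed by Lemma~\ref{lem: Toolbox}~\ref{item: Extrapolation}, then gives $\L^\sigma$ boundedness for all $\sigma\in(p,p^\star]$, hence $p_-(\H)\le p$. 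Since $p<2$, we have $p^\star>p$ whenever $p<n+2$, which holds.

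The key step is to produce the estimate $\|\D\E_\lambda u\|_p\le C\lambda^{-1}\|u\|_p$ from the Riesz transform bound. I would write
\begin{align*}
\lambda\D\E_\lambda u=\RH\,(\lambda^2\H)^{1/2}(1+\lambda^2\H)^{-1}u ,
\end{align*}
so it suffices to show that $\phi(\lambda^2\H)$ with $\phi(z)=z^{1/2}(1+z)^{-1}$ is uniformly $\L^p$ bounded. A cleaner route avoids needing a bounded functional calculus on $\L^p$. The set of $p$ with $\RH$ bounded is an interval containing $2$ by interpolation, so $\RH$ is bounded on every $[p,2]$. I would then argue by contradiction or by a bootstrap: suppose $p<p_-(\H)$. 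Pick $s$ slightly above $p_-(\H)$ with $s_\star<p_-(\H)$, where possible, or rather work near $p_0:=\max(p,\cdot)$. For $s\in(p_-(\H),2]$, Lemma~\ref{lem: funct calculus}~\ref{item: simple unif bound for funct calc} (with $\psi$ for $\alpha=1/6$, i.e.\ $z^{1/2}(1+z)^{-1}$) gives uniform $\L^s$ bounds for $\phi(\lambda^2\H)$. Hence $(\lambda\D\E_\lambda)$ is $\L^s$ bounded, which is already known. This shows the real difficulty: one needs $\L^\sigma$ bounds on $\phi(\lambda^2\H)$ for $\sigma$ below $p_-(\H)$.

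To handle that, I would instead use the Sobolev embedding directly on $\H^{-1/2}$. Boundedness of $\RH$ on $\L^\sigma$ for $\sigma\in[p,2]$ gives $\|\H^{-1/2}f\|_{\sigma^\star}\le C\|\D\H^{-1/2}f\|_\sigma\le C\|f\|_\sigma$ by Lemma~\ref{lem: Sobolev}. The same holds for $\H^\star$ after duality, where I use that $\RH$ bounded on $\L^\sigma$ dualizes to $(\H^\star)^{-1/2}\D^\star$ bounded on $\L^{\sigma'}$. Composing $\H^{-1/2}$ with itself on the appropriate scale, together with $\E_\lambda=\lambda^{-2}\H^{-1}(1-\E_\lambda)$, gives $\L^\sigma-\L^{\sigma^\star}$-type control of resolvents. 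The main obstacle is making this rigorous without circularly assuming resolvent bounds at $\sigma$. My first attempt would be to start at $\sigma=s$ just above $p_-(\H)$, where everything is available. I would derive $\L^{\sigma_\star}-\L^\sigma$ boundedness of $(\E_\lambda)$ from the $\L^\sigma$ Riesz bound via Sobolev and duality. Note that $\E_\lambda=(1+\lambda^2\H)^{-1/2}\cdot(1+\lambda^2\H)^{-1/2}$, and the $\L^{\sigma'}$ bound of $\lambda\D(1+\lambda^2\H^\star)^{-1/2}$ follows from the Riesz bound for $\H^\star$ composed with the $\L^{\sigma'}$-bounded operator $(\lambda^2\H^\star)^{1/2}(1+\lambda^2\H^\star)^{-1/2}$, which is available since $\sigma'\ge2$ lies in the good range for $\H^\star$. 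I would then conclude via the off-diagonal extrapolation as in Lemma~\ref{lem: 2_star} that $(\E_\lambda)$ is $\L^r$ bounded for $r\in(\sigma_\star,\sigma]\cap[p,2]$. Iterating downward in finitely many steps reaches every $r>p$, so $p_-(\H)\le p$.
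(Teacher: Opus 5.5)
\textbf{There is a genuine gap at the step that is meant to do all the work.} You correctly note that, by duality, an $\L^\sigma$ bound for $\RH=\D\H^{-1/2}$ is equivalent to an $\L^{\sigma'}$ bound for $(\H^\star)^{-1/2}\D^\star$. You then use ``the Riesz bound for $\H^\star$'' to control $\lambda\D(1+\lambda^2\H^\star)^{-1/2}$ on $\L^{\sigma'}$. That is boundedness of $\D(\H^\star)^{-1/2}$ on $\L^{\sigma'}$ with $\sigma'\ge 2$, which is a different operator. The dual statement is only a reverse-type inequality, as in the corollary of Section~\ref{S10}. Bounds for parabolic Riesz transforms above $2$ are not available at all; this is open problem (2.) there.

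A sanity check confirms the step cannot be right. Your derivation never uses the hypothesis at $p$: the only Riesz bound it invokes is at an exponent $\sigma$ just above $p_-(\H)$. There $\RH$ is bounded for \emph{every} $\H$ by Theorem~\ref{thm: Riesz sufficient}, so the intersection with $[p,2]$ is cosmetic. If the step were valid, it would push resolvent bounds strictly below $p_-(\H)$ for every operator with $p_-(\H)>\frac{n+2}{n+1}$, contradicting the definition of $p_-(\H)$. Such operators exist by Proposition~\ref{prop: Sharpness} when $n\ge 3$. Your alternative via $\E_\lambda=\lambda^{-2}\H^{-1}(1-\E_\lambda)$ is circular, as you note yourself.

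\textbf{The missing idea} is to place the functional-calculus factor \emph{after} $\H^{-1/2}$, on the larger exponent, rather than in front of $\RH$. Then you need no bound for $\D\E_\lambda$ and no duality.

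Suppose $p<p_-(\H)$. Pick $q\in[p,p_-(\H))$ close enough to $p_-(\H)$ that $q^\star>p_-(\H)$; note $q^\star\le 2$ because $p_-(\H)\le 2_\star$ (Lemma~\ref{lem: 2_star}). For $u\in\L^q\cap\L^2\subseteq\operatorname{ran}(\sqrt{\H})$ (Lemma~\ref{lem: many functions}), write
\[
\E_\lambda u=\lambda^{-1}\bigl(\lambda\H^{1/2}\E_\lambda\bigr)\H^{-1/2}u .
\]
\begin{itemize}
\item The $\L^q$ bound for $\RH$ holds by interpolation between $p$ and $2$; this is where the hypothesis enters. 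Together with Lemma~\ref{lem: Sobolev} it gives $\|\H^{-1/2}u\|_{q^\star}\lesssim\|u\|_q$.
\item The operator $\lambda\H^{1/2}\E_\lambda=\psi(\lambda^2\H)$ with $\alpha=\nicefrac{1}{6}$ is uniformly bounded on $\L^{q^\star}$ by Lemma~\ref{lem: funct calculus}, since $q^\star>p_-(\H)$.
\end{itemize}
Hence $(\E_\lambda)_{\lambda>0}$ is $\L^q-\L^{q^\star}$ bounded. Your interpolation-with-$\L^2$-off-diagonal-estimates plus extrapolation step then yields resolvent bounds at exponents below $p_-(\H)$, a contradiction.

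You already had both ingredients: the function $z^{1/2}(1+z)^{-1}$ and the estimate $\|\H^{-1/2}f\|_{\sigma^\star}\lesssim\|f\|_\sigma$. You combined them in the wrong order. With the correct order this is the paper's argument. The paper organizes it as a backward induction along $p_k=p_{k-1}^\star$, starting from $p_{k^+}\ge 2_\star\ge p_-(\H)$; in your contradiction set-up a single step suffices.
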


\begin{proof}
    We set $p_0 \coloneqq p$ and iteratively define $p_k \coloneqq p^\star_{k-1}$, stopping at the first index $k^+ \ge 0$ for which $p_{k^+} \in [2_\star, 2)$. We prove by backward induction that $p_k \ge p_{-}(\H)$ holds for all $0 \le k \le k^+$.

    For $k = k^+$, we have $p_{k^+} \ge 2_\star \ge p_{-}(\H)$ by Lemma \ref{lem: 2_star}.

    Let $1 \le k \le k^+$ and assume that $p_k \ge p_-(\H)$. We now prove that $p_{k-1} \ge p_-(\H)$. Fix $q \in (p_{k-1}, 2_\star)$. Then we have $q^\star \in (p_k, 2]$, and hence $q^\star > p_-(\H)$. Lemma \ref{lem: funct calculus} \ref{item: simple unif bound for funct calc} with $\alpha =\nicefrac{1}{6}$ ensures that $(\lambda \H^{1/2} \E_\lambda)_{\lambda>0}$ is $\L^{q^\star}$ bounded. For $u\in \L^q(\R^{n+1}) \cap \L^2(\R^{n+1})$ we also have $u\in \operatorname{ran}(\sqrt{\mathcal{H}})$ by Lemma \ref{lem: many functions}. Using composition rules for the functional calculus (e.g.\ \cite[Proposition 5.15]{egert2024harmonic}), boundedness of $\RH$, and $(\lambda \H^{1/2} \E_\lambda)_{\lambda>0}$ and the parabolic Sobolev embedding (see Lemma \ref{lem: Sobolev}), we obtain
    \begin{align*}
        \norm{\E_\lambda u}_{q^\star} = \lambda^{-1} \norm{\lambda \H^{1/2} \E_\lambda \H^{-1/2}u}_{q^\star} &\leq C \lambda^{-1} \norm{ \H^{-1/2}u}_{q^\star}
        \\& \leq C \lambda^{-1} \norm{ \mathbb{D} \H^{-1/2}u}_{q}
        \\& \leq C \lambda^{-\gamma_{q,q^\star}} \norm{ u}_{q}
    \end{align*}
    for all $\lambda > 0$. Hence,  $(\E_\lambda)_{\lambda>0}$ is $\L^{q} - \L^{q^\star}$ bounded. As this is true for all $q\in (p_{k-1},2_\star)$, by interpolation with the $\L^2$ off-diagonal estimates in Proposition \ref{prop: classical ODEs} we find for all $q\in (p_{k-1},2_\star)$ that there exists $q^\circ \in (q,2)$ such that $\left( \mathcal{E}_\lambda \right)_{\lambda>0}$ satisfies $\L^{q} - \L^{q^\circ}$ off-diagonal estimates. In particular, by Lemma \ref{lem: Toolbox} \ref{item: Extrapolation}, $\left( \mathcal{E}_\lambda \right)_{\lambda>0}$ is $\L^q$ bounded for all $q \in (p_{k-1},2]$, and therefore $p_{k-1} \ge p_-(\H)$.
\end{proof}

\section{Improvements for real-valued coefficients}\label{S8}

In this section, we consider the case that the matrix-valued function $A$ has real-valued coefficients. The main result is the following theorem, yielding $\L^p$ boundedness of the parabolic Riesz transform in the full range $p \in (1,2]$.

\begin{thm}\label{thm: A real}
If the coefficients of $A$ are real-valued, then $p_-(\mathcal{H}) = 1$. In particular, for every $p \in (1,2]$, the Riesz transform $\mathcal{R}_\H=(\nabla_x \mathcal{H}^{-1/2}, D_t^{1/2} \mathcal{H}^{-1/2})$ is bounded on $\L^p(\mathbb{R}^{n+1})$. 
\end{thm}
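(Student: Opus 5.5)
The plan is to show that $(\E_\lambda)_{\lambda>0}$ is uniformly bounded on $\L^p(\R^{n+1})$ for every $p\in(1,2]$ by way of Gaussian upper bounds for the fundamental solution. The second assertion then follows at once from Theorem~\ref{thm: Riesz sufficient}, since $(p_-(\H),2]=(1,2]$.

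\emph{Step 1 (heat kernel).} For real measurable elliptic coefficients depending on $(x,t)$, Aronson's estimates apply. The evolution family $\Gamma(t,s)$ solving $\partial_t u-\Div_x(A\nabla_x u)=0$ for $t>s$ has a kernel $\Gamma(x,t;y,s)$ with
\[0\le \Gamma(x,t;y,s)\le C(t-s)^{-n/2}\e^{-c|x-y|^2/(t-s)}\,\one_{t>s}.\]
These bounds depend only on $n,M,\nu$. The kernel is built by approximating $A$ with smooth coefficients, and the $\L^2$ solution operator of $\H$ is identified with the Duhamel integral
\[\H^{-1}f(x,t)=\int_{-\infty}^t\!\!\int_{\R^n}\Gamma(x,t;y,s)f(y,s)\dd y\dd s,\]
first on $\Cont_\c^\infty$ and in the sense of the Kaplan realization of Section~\ref{subsec:parabolicOP}. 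The same formula then gives $(1+\lambda^2\H)^{-1}=\lambda^{-2}(\lambda^{-2}+\H)^{-1}$ with kernel $\lambda^{-2}\e^{-(t-s)/\lambda^2}\Gamma(x,t;y,s)\one_{t>s}$. To check that this identification holds I would verify that the right-hand side lies in $\EE$ and satisfies the variational equation. The cleanest route is to take smooth approximants $A_k\to A$ a.e., note that the resolvents converge strongly on $\L^2$ (a standard perturbation argument from the uniform coercivity of the forms on $\EE$), and observe that the kernels of the approximants converge locally uniformly by Nash--De Giorgi Hölder continuity, with the bounds staying uniform.

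\emph{Step 2 (uniform $\L^1$ and $\L^\infty$ bounds).} Integrating the kernel bound in $y$ gives $\int_{\R^n}\Gamma\dd y\le C$. The time weight $\lambda^{-2}\e^{-(t-s)/\lambda^2}$ has $\dd s$-integral equal to $1$. So Schur's test shows that $\E_\lambda$ is bounded on $\L^1$ and on $\L^\infty$, uniformly in $\lambda$. Interpolating with $\L^2$ gives uniform $\L^p$ bounds for all $p\in[1,2]$, hence $p_-(\H)=1$. As a by-product, the kernel is dominated by $\lambda^{-(n+2)}g((x-y)/\lambda,(t-s)/\lambda^2)$ with $g$ integrable and Gaussian in space. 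Splitting the region $\d(E,F)$ into its spatial and temporal parts then yields $\L^1-\L^\infty$ estimates with decay, which is the information needed later for the weak type $(1,1)$ claim. That claim is not required for the statement at hand.

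\emph{Main obstacle.} The delicate point is Step~1: rigorously matching the $\L^2$ realization of $\H$ from Kaplan's trick with the Duhamel integral of the Aronson kernel when the coefficients are only measurable in $t$. The approximation argument has to pass both the resolvent identity and the kernel bounds to the limit. The resolvents converge strongly on $\L^2$, and the kernels converge weakly with uniform Gaussian domination, so dominated convergence closes the argument.
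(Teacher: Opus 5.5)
Your proposal is correct and follows the paper's proof. The Gaussian bound for the kernel $\lambda^{-2}\e^{-(t-s)/\lambda^2}\Gamma$ of $\E_\lambda$ gives uniform $\L^1$ bounds through a Schur/Young computation, hence $p_-(\H)=1$, and Theorem~\ref{thm: Riesz sufficient} concludes; the paper imports that Gaussian bound as Lemma~\ref{lem: bornes gaussiennes} from \cite{AEN2018} rather than re-deriving it by approximation. The only slip is in your unused side remark: the single-resolvent kernel has the unbounded singularity $(t-s)^{-n/2}$ at the diagonal, so $\L^1-\L^\infty$ estimates in the sense of Definition~\ref{def: toolbox} require higher powers $\E_\lambda^m$ (compare Lemma~\ref{lem: hors diagonal grace aux gaussiennes}), not $m=1$.
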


The key point here is the availability of Gaussian bounds, which we state next.

\begin{lem}[{\cite[Lemma 4.3]{AEN2018}}]\label{lem: bornes gaussiennes}
Suppose the coefficients of $A$ are real. For every $\lambda >0$ and $m \geq 1$, the resolvent $\mathcal{E}_\lambda^m$ is an integral operator on $\L^2(\R^{n+1})$ with kernel $K^m_\lambda$ satisfying the following pointwise estimate:
\begin{equation}\label{eq: real kernel}
|K^m_\lambda(x,t;y,s)|
\le C \,\mathbf{1}_{(0,\infty)}(t-s)
\frac{\e^{-\frac{t-s}{\lambda^2}}}{\lambda^{2m}}
\frac{\e^{-c\frac{|x-y|^2}{t-s}}}{(t-s)^{n/2-(m-1)}},
\end{equation}
where $C$ and $c$ are constants depending only on $M$, $\nu$, $n$, and $m$.
\end{lem}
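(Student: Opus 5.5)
The estimate \eqref{eq: real kernel} for $m\ge 1$ will be obtained by induction on $m$, starting from the Gaussian bounds for the fundamental solution of the non-autonomous Cauchy problem. For real bounded measurable coefficients, Aronson's theorem provides a fundamental solution $\Gamma(x,t;y,s)$ of $\partial_t-\Div_x(A\nabla_x)$. It vanishes for $t<s$ and satisfies
\begin{equation*}
0\le \Gamma(x,t;y,s)\le C(t-s)^{-n/2}\e^{-c\frac{|x-y|^2}{t-s}}.
\end{equation*}
I would first identify $\E_\lambda=(1+\lambda^2\H)^{-1}$ as the integral operator with kernel
\begin{equation*}
K^1_\lambda(x,t;y,s)=\lambda^{-2}\,\one_{(0,\infty)}(t-s)\,\e^{-\frac{t-s}{\lambda^2}}\,\Gamma(x,t;y,s).
\end{equation*}
Formally this is the Laplace transform in time of the evolution family: $(\lambda^{-2}+\H)^{-1}f(t)=\int_{-\infty}^t \e^{-(t-s)/\lambda^2}U(t,s)f(s)\dd s$. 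To make it rigorous, I would take $f\in\Cont_\c^\infty(\R^{n+1})$ and define $u$ by this Duhamel-type formula. Using the Gaussian bound, $u\in\L^2$. The remaining check is $u\in\EE$ with $(1+\lambda^2\H)u=f$ in $\EE^\star$. This follows from the weak formulation of the evolution equation on $\R$. The regularity $D_t^{1/2}u\in\L^2$ comes from $\partial_t u\in \L^2(\R;H^{-1})$ together with $u\in\L^2(\R;H^1)$, by interpolation. Uniqueness in $\EE$ (injectivity from Theorem~\ref{thm: théorie L2}) then identifies $u=\E_\lambda f$. This gives the case $m=1$ with exponent $n/2-0$.

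For the induction step, write $K^{m+1}_\lambda(x,t;y,s)=\iint K^m_\lambda(x,t;z,\sigma)K^1_\lambda(z,\sigma;y,s)\dd z\dd\sigma$. The time integration runs over $s<\sigma<t$, and the exponentials combine to $\e^{-(t-s)/\lambda^2}$. For the spatial convolution of two Gaussians, I would use the standard estimate
\begin{equation*}
\int_{\R^n}\e^{-c\frac{|x-z|^2}{t-\sigma}}\e^{-c\frac{|z-y|^2}{\sigma-s}}\dd z\le C\Bigl(\frac{(t-\sigma)(\sigma-s)}{t-s}\Bigr)^{n/2}\e^{-c\frac{|x-y|^2}{t-s}},
\end{equation*}
which is exact for Gaussians via the semigroup property. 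Plugging in the bounds $(t-\sigma)^{-(n/2-(m-1))}$ and $(\sigma-s)^{-n/2}$ leaves the time integral
\begin{equation*}
(t-s)^{-n/2}\int_s^t (t-\sigma)^{m-1}\dd\sigma=\tfrac{1}{m}(t-s)^{-n/2+m}.
\end{equation*}
This produces the claimed power $(t-s)^{-(n/2-m)}$ and prefactor $\lambda^{-2(m+1)}$. Strictly speaking, reusing the constant $c$ requires shrinking it slightly at each step, which is harmless since $m$ is fixed.

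The main obstacle is the first step: rigorously linking the form-defined $\H$ on $\EE$ (with the Hilbert-transform formulation) to Aronson's fundamental solution. Concretely, one must verify that the Laplace-transformed evolution family lands in $\EE$ and solves the variational resolvent equation. Since the statement is cited from \cite[Lemma 4.3]{AEN2018}, I would follow that route. I would prove the identification first for smooth coefficients, where everything is classical, and then pass to general real coefficients by approximation. The approximation uses the stability of Aronson's constants (they depend only on $M,\nu,n$) and the strong resolvent convergence of the corresponding $\H$ on $\L^2$.
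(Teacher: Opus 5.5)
Your proposal is correct and follows the route the paper indicates. The paper gives no proof of its own: it cites \cite[Lemma 4.3]{AEN2018} and remarks that these bounds are a direct consequence of Gaussian upper bounds for the fundamental solution of $\H$, which is exactly your reduction (Aronson's bound for $\Gamma$, the Laplace-transform identification of $\E_\lambda$, then composition).

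Two small corrections. First, uniqueness in your identification step comes from accretivity, which makes $1+\lambda^2\H$ injective, not from injectivity of $\H$. Second, no shrinking of $c$ is needed, since the Gaussian convolution identity is exact with the same $c$. In fact, the reproducing property $\int_{\R^n}\Gamma(x,t;z,\sigma)\Gamma(z,\sigma;y,s)\dd z=\Gamma(x,t;y,s)$ even gives the exact formula $K^m_\lambda=\one_{(0,\infty)}(t-s)\frac{(t-s)^{m-1}}{(m-1)!\,\lambda^{2m}}\e^{-(t-s)/\lambda^2}\Gamma$, from which the bound is immediate.
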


These kernel bounds imply off-diagonal estimates for the resolvent.

\begin{lem}\label{lem: hors diagonal grace aux gaussiennes}
  Suppose the coefficients of $A$ are real. For every $p \in [1,2]$ there exists $m \geq 1$ such that $(\mathcal{E}_\lambda^m)_{\lambda>0}$ satisfies $\L^1-\L^p$ off-diagonal estimates. If $p=1$, we can take $m=1$. 
\end{lem}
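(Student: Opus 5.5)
The plan is to derive everything from the pointwise kernel bound \eqref{eq: real kernel} in Lemma~\ref{lem: bornes gaussiennes}, via a Young/Schur-type argument for operators with integrable kernels. We fix measurable sets $E,F\subseteq\R^{n+1}$ and $f$ supported in $E$, and write $D:=\d(E,F)$. For $(x,t)\in F$ and $(y,s)\in E$ we have either $|x-y|_\infty\ge D$ or $|t-s|\ge D^2$. In the first case the Gaussian factor gives $\e^{-c|x-y|^2/(t-s)}\le \e^{-c'D^2/(t-s)}\e^{-c''|x-y|^2/(t-s)}$. In the second case the factor $\e^{-(t-s)/\lambda^2}\le \e^{-D^2/(2\lambda^2)}\e^{-(t-s)/(2\lambda^2)}$ produces decay directly. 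In both cases we then use $\e^{-c' D^2/(t-s)}\e^{-(t-s)/(2\lambda^2)}\le \e^{-c D/\lambda}$, which follows from $a/\sigma+\sigma/b\ge 2\sqrt{a/b}$ with $\sigma=t-s$. This yields
\begin{equation*}
|K^m_\lambda(x,t;y,s)|\le C\,\e^{-cD/\lambda}\,G^m_\lambda(x-y,t-s),
\end{equation*}
where $G^m_\lambda$ is a kernel of the same form as \eqref{eq: real kernel} with smaller constants.

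\emph{Case $p=1$, $m=1$.} Here $\int_0^\infty\int_{\R^n}\lambda^{-2}\e^{-\sigma/\lambda^2}\sigma^{-n/2}\e^{-c|z|^2/\sigma}\,\d z\,\d\sigma = C\int_0^\infty \lambda^{-2}\e^{-\sigma/\lambda^2}\,\d\sigma=C$. Young's convolution inequality in $\L^1$ then gives $\|\one_F\E_\lambda\one_E f\|_1\le C\e^{-cD/\lambda}\|\one_E f\|_1$. This is the $\L^1$ off-diagonal estimate with $\gamma_{1,1}=0$.

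\emph{Case $p\in(1,2]$.} By Young's inequality it is enough to bound $\|G^m_\lambda\|_{\L^p(\R^{n+1})}$ by $C\lambda^{-\gamma_{1,p}}=C\lambda^{-(n+2)(1-1/p)}$. Integrating in $z$ gives
\begin{equation*}
\int_{\R^n}\bigl(\sigma^{-n/2+m-1}\e^{-c|z|^2/\sigma}\bigr)^p\dd z = C\sigma^{p(m-1-n/2)+n/2}.
\end{equation*}
Hence $\|G^m_\lambda\|_p^p=C\lambda^{-2mp}\int_0^\infty \e^{-p\sigma/\lambda^2}\sigma^{p(m-1)-n(p-1)/2}\,\d\sigma$. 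This integral converges at $\sigma=0$ precisely when $p(m-1)-n(p-1)/2>-1$. That is the one step requiring a choice, and it holds once $m$ is large, e.g.\ $m\ge 1+n/2$. Substituting $\sigma=\lambda^2\tau$ gives $\|G^m_\lambda\|_p^p=C\lambda^{-2mp+2p(m-1)-n(p-1)+2}=C\lambda^{-(n+2)(p-1)}$, which is exactly the required scaling.

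The computations are routine. The only point to watch is the splitting of the exponential decay between the two geometric cases, so that decay in $\d(E,F)/\lambda$ is extracted while an integrable kernel of the same form remains. Losing half of each exponent in the process is harmless.
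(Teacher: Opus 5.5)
Your proposal is correct and follows essentially the same route as the paper: the Gaussian bound of Lemma~\ref{lem: bornes gaussiennes}, Young's convolution inequality, and a choice of $m$ with $p(m-1)-n(p-1)/2>-1$ (the paper's condition $p(n/2-(m-1))-n/2<1$), followed by the scaling $\sigma=\lambda^2\tau$ to get the factor $\lambda^{-\gamma_{1,p}}$. The only difference is where the decay is extracted. You obtain $\e^{-c\,\d(E,F)/\lambda}$ pointwise from the kernel via the case split and $a/\sigma+\sigma/b\ge 2\sqrt{a/b}$. The paper instead encodes the separation as the constraint $|x|^2+t\ge \d(E,F)^2$ on the integration region and gets the decay by splitting the resulting time integral at $\d(E,F)/\lambda$.
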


\begin{proof}
Let $u \in \L^1(\R^{n+1}) \cap \L^2(\R^{n+1})$, let $E,F \subseteq \R^{n+1}$ be measurable and set $\d \coloneqq \d(E,F) \geq 0$. For $\lambda>0$ the kernel bounds in Lemma~\ref{lem: bornes gaussiennes}, together with Young's convolution inequality, yield
\begin{align*}
    \norm{\one_F \E_\lambda^m (\one_E u)}_p &\le C  \bigg( \iint_{\R^{n+1}} \one_{(0,\infty)}(t) \frac{\e^{-p\frac{t}{\lambda^2}}}{\lambda^{2pm}} \frac{\e^{-pc \frac{|x|^2}{t}}}{t^{p(\nicefrac{n}{2}-(m-1))}} \one_{[0,\infty)}(|x|^2+t - \d^2) \, \mathrm{d}x \, \mathrm{d}t \bigg)^{1/p} \norm{\one_E u}_1
    \\& =  \frac{C}{\lambda^{2m}} \biggl( \int^{\infty}_{0}  \frac{\e^{-p\frac{t}{\lambda^2}}}{t^{p(\nicefrac{n}{2}-(m-1))-\frac{n}{2}}}  \bigg( \int_{|x|^2\ge \frac{\d^2}{t}-1}\e^{-pc |x|^2} \, \mathrm{d}x \bigg) \, \mathrm{d}t \biggr)^{1/p} \norm{\one_E u}_1
    \\& \leq\e^{c/2} \left( \frac{2\pi}{p c} \right)^{\frac{n}{2p}} \frac{C}{\lambda^{2m}} \biggl( \int^{\infty}_{0}  \frac{\e^{-p\frac{t}{\lambda^2}}\e^{-\frac{pc}{2} \frac{\d^2}{t}}}{t^{p(\nicefrac{n}{2}-(m-1))-\frac{n}{2}}}  \, \mathrm{d}t \biggr)^{1/p} \norm{\one_E u}_1
    \\&= C\e^{c/2} \left( \frac{2\pi}{p c} \right)^{\frac{n}{2p}} \lambda^{-\gamma_{1,p}} \biggl( \int^{\infty}_{0}  \frac{\e^{-ps}\e^{-\frac{pc}{2} \frac{\d^2}{\lambda^2} \frac{1}{s}}}{s^{p(\nicefrac{n}{2}-(m-1))-\frac{n}{2}}}  \, \mathrm{d}s \biggr)^{1/p} \norm{\one_E u}_1,
\end{align*}
where we used a change of variables in $x$ in the second line and in $t$ in the last line, and wrote $\e^{-pc|x|^2}=\e^{-\nicefrac{pc}{2}|x|^2}\e^{-\nicefrac{pc}{2}|x|^2}$ in the second line to bound the expression by a Gaussian integral. Note that the term $\one_{[0,\infty)}(|x|^2+t - \d^2)$ accounts for the (possible) separation of $E$ and $F$. We then choose $m \ge 1$ such that $p(\nicefrac{n}{2}-(m-1))-\frac{n}{2} < 1$, so that the above integral is finite (integrability near $0$ when $\d = 0$). The result then follows immediately by writing
\begin{align*}
    \int^{\infty}_{0}  \frac{\e^{-ps}\e^{-\frac{pc}{2} \frac{\d^2}{\lambda^2} \frac{1}{s}}}{s^{p(\nicefrac{n}{2}-(m-1))-\frac{n}{2}}}  \, \mathrm{d}s &= \int^{\nicefrac{\d}{\lambda}}_{0}  \frac{\e^{-ps}\e^{-\frac{pc}{2} \frac{\d^2}{\lambda^2} \frac{1}{s}}}{s^{p(\nicefrac{n}{2}-(m-1))-\frac{n}{2}}}  \, \mathrm{d}s + \int^{\infty}_{\nicefrac{\d}{\lambda}}  \frac{\e^{-ps}\e^{-\frac{pc}{2} \frac{\d^2}{\lambda^2} \frac{1}{s}}}{s^{p(\nicefrac{n}{2}-(m-1))-\frac{n}{2}}} \, \mathrm{d}s \\&\le  \biggl( \int^{\infty}_{0}  \frac{\e^{-\frac{p}{2}s} }{s^{p(\nicefrac{n}{2}-(m-1))-\frac{n}{2}}}  \, \mathrm{d}s \biggr)\e^{-\min(\frac{pc}{2},\frac{p}{2}) \frac{\d}{\lambda}},
\end{align*}
where the last estimate follows from elementary inequalities. If $p=1$, then the condition on $m$ allows us to take $m=1$.
\end{proof}

\begin{proof}[Proof of Theorem \ref{thm: A real}] 
Lemma~\ref{lem: hors diagonal grace aux gaussiennes} yields that $(\mathcal{E}_\lambda)_{\lambda>0}$ is $\L^1$ bounded, and therefore $p_-(\mathcal{H}) = 1$. In particular, by Theorem \ref{thm: Riesz sufficient}, for every $p \in (1,2]$, the Riesz transform $\mathcal{R}_\H$ is bounded on $\L^p(\mathbb{R}^{n+1})$.
\end{proof}

For the first component $\nabla_x \H^{-\nicefrac{1}{2}}$, the previous result can be improved and we get the weak type bound at the endpoint $p=1$. It is an open problem whether the same holds true for the full Riesz transform. 

\begin{thm}\label{thm: weak-type (1,1)}
    If the coefficients of $A$ are real, then $\nabla_x \mathcal{H}^{-1/2}$ is of weak type $(1,1)$. 
\end{thm}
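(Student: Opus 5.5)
We apply the Blunck--Kunstmann criterion, Theorem~\ref{thm: BK}, with $p=1$, $q=2$ and $T=\nabla_x \H^{-1/2}$. This operator is of strong type $(2,2)$ by Theorem~\ref{thm: théorie L2}. The key advantage of the spatial component is that its off-diagonal decay is \emph{exponential in the parabolic distance}. For every $m$, the family $(\lambda\nabla_x\E_\lambda^m)_{\lambda>0}$ satisfies $\L^2$ off-diagonal estimates (Proposition~\ref{prop: classical ODEs} and composition). We therefore never need the limited temporal decay of Proposition~\ref{prop: time supports}, and we may use the classical parabolic annuli, that is, $N=2$ in effect. More precisely, $C_j^N(\Delta)\subseteq \R^{n+1}\setminus (2^jQ\times 2^{2j}I)$ for $N\ge 2$, so $\d(C_j^N(\Delta),\Delta)\gtrsim 2^j r$. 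Exponential decay $\e^{-c2^j}$ then beats any factor $(2^nN^2)^{j/2}$ once $N$ is fixed.

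\emph{Step 1: Decomposition of $u$.} Fix $u\in\Z$ supported in $\Delta_r$. Lemma~\ref{lem: hors diagonal grace aux gaussiennes} (with $p=2$) gives an integer $\beta$ such that $(\E_\lambda^\beta)_{\lambda>0}$ satisfies $\L^1-\L^2$ off-diagonal estimates. Set $\varphi(z)=(1-(1+z)^{-\beta})^{3\alpha}$, $v=(1-\varphi(r^2\H))u$ and $w=\varphi(r^2\H)u$. Exactly as in Step~2 of the proof of Theorem~\ref{thm: Riesz sufficient}, expand $v$ binomially into powers $\E_r^{k\beta}$, $k\ge1$. Each of these satisfies $\L^1-\L^2$ off-diagonal estimates: compose $\E_r^\beta$ with $\E_r^{(k-1)\beta}$, which has $\L^2$ off-diagonal estimates. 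This yields \eqref{eq: BK2} with $\varepsilon_j=C\e^{-c2^{j}}$.

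\emph{Step 2: Verification of \eqref{eq: BK1}.} Write
\begin{equation*}
\nabla_x\H^{-1/2}w=c_\alpha\int_0^\infty \lambda\nabla_x\E_\lambda^{3\alpha-\beta}\,\E_\lambda^\beta\psi(\lambda^2\H)\varphi(r^2\H)u\,\frac{\d\lambda}{\lambda}.
\end{equation*}
Split with $\one_E+\one_{E^\c}$, where $E=2^{j-2}Q_r\times 4^{j-2}I_r$ is a parabolic cube at distance $\gtrsim 2^jr$ from $C_j^N(\Delta_r)$.

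\emph{The $\one_E$ part.} Use $\L^2$ off-diagonal decay $\e^{-c2^jr/\lambda}$ of $\lambda\nabla_x\E_\lambda^{3\alpha-\beta}$. Follow with $\L^1-\L^2$ boundedness $\lambda^{-\gamma_{1,2}}$ of $\E^\beta_\lambda$, and then with $\L^1$ bounds $\min(1,(r/\lambda)^{2\alpha})$ from Lemma~\ref{lem: funct calculus} (valid on $\L^1$ since $\E_\lambda$ has $\L^1$ off-diagonal estimates by Lemma~\ref{lem: hors diagonal grace aux gaussiennes}). After substituting $\lambda\mapsto r\lambda$, the $\lambda$-integral is
\begin{equation*}
\int_0^\infty \e^{-c2^j/\lambda}\lambda^{-\gamma_{1,2}}\min(1,\lambda^{-2\alpha})\frac{\d\lambda}{\lambda}\lesssim 2^{-j\gamma_{1,2}}.
\end{equation*}
Near $0$ the exponential saves the integral, and near $\infty$ the factor $\lambda^{-2\alpha}$ does. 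The exponent $\gamma_{1,2}=(n+2)/2$ is \emph{not} an obstruction here, precisely because no polynomial factor $\lambda/r$ appears, unlike in \eqref{eq: THE ONE}.

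\emph{The $\one_{E^\c}$ part.} Treat it as $\mathrm{II}_1,\mathrm{II}_2$ in \eqref{eq: THE TWO}. Insert a smaller cube $F$ around $\Delta_r$ and use the $\L^1-\L^2$ off-diagonal estimates of $\E^\beta_\lambda$ on $\one_{E^\c}\E^\beta_\lambda\one_F$. For the remaining term, use the functional calculus off-diagonal decay of Lemma~\ref{lem: funct calculus}\ref{item: ODE for funct calc} in $\L^1$, averaged geometrically with \ref{item: boundedness for funct calc}. Both pieces give $2^{-j(\gamma_{1,2}+\alpha)}$ for $\alpha$ large.

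\emph{Step 3: Summability.} The condition is $\sum_j\varepsilon_j(2^nN^2)^{j/2}<\infty$ with $N$ fixed (say $N=4$). The candidate sequence is $\varepsilon_j\sim 2^{-j\gamma_{1,2}}\,\e^{-c'2^j}+2^{-j(\gamma_{1,2}+\alpha)}$. The main obstacle is making sure the $\one_E$ part really retains exponential or sufficiently fast decay in $j$ after the $\lambda$-integration. The integral above only gives $2^{-j\gamma_{1,2}}=2^{-j(n+2)/2}$, which exactly matches the growth $(2^n\cdot 16)^{j/2}$ and is therefore borderline.

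To fix this, I would split the integral at $\lambda=2^jr$. For $\lambda\le 2^jr$, the factor $\e^{-c2^jr/(2\lambda)}$ gives $\e^{-c2^{j}/2}$ times a convergent integral. For $\lambda>2^jr$, the factor $(r/\lambda)^{2\alpha}$ gives $2^{-j(2\alpha+\gamma_{1,2})}$. Taking $\alpha$ large then yields summable $\varepsilon_j$, and Theorem~\ref{thm: BK} gives the weak type $(1,1)$ bound.
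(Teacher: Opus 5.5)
Your proposal is essentially the paper's proof. You rerun the Blunck--Kunstmann argument from Theorem~\ref{thm: Riesz sufficient} with $p=1$, $q=2$ and a fixed $N$, feeding in the Gaussian-derived $\L^1$ and $\L^1-\L^2$ estimates together with the exponential $\L^2$ decay of $\lambda\nabla_x\E_\lambda^m$. This way the term $\mathrm{I}$ is bounded like $\mathrm{II}_1$, and only $\alpha$ has to be taken large.

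There is one slip in your Step~3. For $\lambda\le 2^jr$, the factor $\e^{-c2^jr/(2\lambda)}$ is only bounded by $\e^{-c/2}$, not by $\e^{-c2^j/2}$; near $\lambda\approx 2^jr$ the exponential gives no decay in $j$ at all. The clean fix is the one used in \eqref{eq: THE TWO one}. Bound $\min(1,(r/\lambda)^{2\alpha})\le (r/\lambda)^{2\alpha}$ on the whole range and substitute $\lambda\mapsto 2^jr\lambda$. This gives the factor $2^{-j(\gamma_{1,2}+2\alpha)}\int_0^\infty \e^{-c/\lambda}\lambda^{-(\gamma_{1,2}+2\alpha)}\,\frac{\d\lambda}{\lambda}$ directly, which is summable against $(2^nN^2)^{j/2}$ for $\alpha$ large.
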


\begin{proof}
The claim follows by revisiting the proof of Theorem~\ref{thm: Riesz sufficient} and feeding in the better estimates that are available in the present setting. We take $p=1$ and $q=2$ in that proof. No iteration is needed. Moreover, we take $N=2$ since in the absence of the half-order time derivative $D_t^{1/2}$ no off-diagonal estimates with time-stretching enter our estimates.

Let us recapitulate the ingredients needed to repeat the proof of Theorem~\ref{thm: Riesz sufficient} \emph{verbatim} with these choices of parameters:
\begin{enumerate}[(1.)]
    \item The estimates of Lemma~\ref{lem: funct calculus} with $p=1$,
    \item $\L^1-\L^2$ off-diagonal estimates for $(\E_\lambda^\beta)_{\lambda >0}$ for $\beta \in \N$ large enough,
    \item $\L^2$ off-diagonal estimates for $(\nabla_x \E_\lambda^m)_{\lambda>0}$ for all $m \in \N$.
\end{enumerate}
Now, (1.) and (2.) are a consequence of Lemma~\ref{lem: hors diagonal grace aux gaussiennes} and (3.) is due to Proposition~\ref{prop: classical ODEs}. Moreover, since the off-diagonal estimates in (3.) are the standard (exponential) ones rather than the more involved ones from Theorem~\ref{thm: parabolicODEs}, the upper bound of $\mathrm{I}$ in \eqref{eq: THE ONE} becomes the same as the upper bound for $\mathrm{II}$ in \eqref{eq: THE TWO} and therefore only the second series in \eqref{eq: THE SERIES} and the second and third integrals in \eqref{eq: THE INTEGRALS} need to be made finite. This can be achieved by taking $\alpha$ large alone.
\end{proof}

\begin{rem}
The proofs of Theorems~\ref{thm: A real} and \ref{thm: weak-type (1,1)} have used the assumption that the coefficients of $A$ are real only through the kernel bounds of Lemma~\ref{lem: bornes gaussiennes}. These bounds in turn are a direct consequence of Gaussian upper bounds for the fundamental solution of $\H$, see \cite[Lemma 4.3]{AEN2018}. Thus, both Theorem~\ref{thm: A real} and Theorem~\ref{thm: weak-type (1,1)} hold \emph{in extenso} if Gaussian upper bounds for the fundamental solution of $\H$ are available. Besides the case of real-valued coefficients, this applies to small complex perturbations of real coefficients~\cite{hofmann2004gaussian}, and autonomous operators with coefficients $A = A(x)$ when $n=1,2$. In the autonomous setting, the fundamental solution is the semigroup generated by the elliptic part of $\H$ and Gaussian upper bounds have been obtained in \cite[Theorem 2.21]{AMT1998} and \cite[Theorem 3.5]{AMT1998}.
\end{rem}

\section{Sharpness in spatial dimension $n \geq 2$}\label{S9}

In this short section, we prove that our main results for $n \geq 2$ are sharp within the class of all parabolic operators $\H$ in the following sense. 

\begin{prop}\label{prop: Sharpness}
Assume that $n\ge 2$. For every $\varepsilon>0$, there exists a bounded and uniformly elliptic matrix-valued function $A_\varepsilon\in \L^\infty(\R^{n+1};M_n(\C))$ such that, with $\H_\varepsilon=\partial_t-\mathrm{div}_x(A_\varepsilon \nabla_x)$, one has
\begin{equation*}
    2_\star-\varepsilon \leq p_-(\H_\varepsilon).
\end{equation*}
\end{prop}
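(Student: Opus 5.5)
We will use Mooney's irregular solution \cite{Mooney2021}. For $n\ge 2$ and every $\delta>0$, Mooney builds a bounded, uniformly elliptic (in fact real, possibly non-symmetric or symmetric) coefficient matrix $A_\delta(x,t)$ on $\R^n\times\R$ together with a weak solution $u$ of $\partial_t u-\Div_x(A_\delta\nabla_x u)=0$ in a parabolic cylinder around the origin (e.g.\ on $\Delta_1(0,0)$) that lies in the local energy class but is not locally bounded, and in fact fails to be in $\L^{q}_{\mathrm{loc}}$ for some $q$ only slightly above the Sobolev exponent $2^\star=\frac{2(n+2)}{n}$. More precisely, we expect $u\notin \L^{2^\star+\delta'}(\Delta_{1/2})$ with $\delta'\to 0$ as $\delta\to0$, the growth at the singular point being of the form $|u|\sim \d((x,t),0)^{-n/2+\delta}$ in parabolic scaling. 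We take $A_\varepsilon\coloneqq A_\delta$ for suitable $\delta=\delta(\varepsilon)$; since boundedness properties are invariant under time reversal, it suffices to work with $\H_\varepsilon$ or $\H_\varepsilon^\star$, whichever matches Mooney's orientation.

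The plan is to argue by contraposition: suppose $p_-(\H_\varepsilon)<2_\star-\varepsilon$, so that $(\E_\lambda)_{\lambda>0}$ is $\L^p$ bounded for some $p<2_\star-\varepsilon$. By Theorem~\ref{thm: p_=q_}, $(\lambda\D\E_\lambda)_{\lambda>0}$ is then $\L^p$ bounded as well. Following the blueprint of Lemma~\ref{lem: 2_star} and Step~1 of that theorem, we get $\L^p-\L^{p^\star}$ bounds, and by duality $(\E_\lambda^\star)_{\lambda>0}$ is $\L^{(p^\star)'}-\L^{p'}$ bounded. Iterating via Lemma~\ref{lem: gives m} and the composition rule, some power $(\E^{\star}_\lambda)^m$ is $\L^2-\L^{s}$ bounded with $s=p'$, which exceeds $(2_\star)'=2^\star$ by an amount controlled from below by $\varepsilon$. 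Interpolating with $\L^2$ off-diagonal estimates yields $\L^2-\L^{\tilde s}$ off-diagonal estimates for some $\tilde s>2^\star+c(\varepsilon)$. The key consequence to extract is a local reverse-Hölder/higher integrability estimate for local weak solutions of $\H^\star_\varepsilon v=0$ (or of $\H_\varepsilon$): if $v$ solves the equation in $\Delta_{2r}$ then $\bigl(\bariint_{\Delta_r}|v|^{\tilde s}\bigr)^{1/\tilde s}\le C\bigl(\bariint_{\Delta_{2r}}|v|^2\bigr)^{1/2}$. To get this, localize: with a smooth space-time cutoff $\chi$ equal to $1$ on $\Delta_r$, write $\chi v=\E_\lambda(\chi v+\lambda^2\H(\chi v))$ with $\lambda=r$, and expand $\H(\chi v)$ using the equation into terms $(\partial_t\chi)v$, $\Div_x(A v\nabla_x\chi)$ and $A\nabla_x v\cdot\nabla_x\chi$; the divergence term is handled by the $\L^2-\L^{\tilde s}$ bounds of $\lambda\E_\lambda\Div_x$ (duality of the gradient family), and the non-local $D_t^{1/2}$ plays no role because the commutator with $\chi$ in $t$ is local, exactly as in Proposition~\ref{prop: time supports}. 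Caccioppoli's inequality controls $\nabla_x v$ by $v$ on the larger cube.

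Applying this reverse Hölder inequality to Mooney's solution contradicts $u\notin\L^{\tilde s}_{\mathrm{loc}}$ once $\delta$ is chosen so that the integrability threshold of $u$ is below $2^\star+c(\varepsilon)$. The main obstacle is the localization step: the energy space $\EE$ is global and defined through $D_t^{1/2}$, so a local weak solution (which only has $\nabla_x u\in\L^2_{\mathrm{loc}}$ and $u\in\L^\infty_t\L^2_x$ locally) must be shown to have $\chi u\in\EE$ with the identity $(1+\lambda^2\H)(\chi u)=$ explicit local right-hand side in $\EE^\star$; I would first try to prove $\chi u\in\EE$ via the known half-time-derivative regularity of local solutions, and otherwise mollify in time and pass to the limit. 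The second delicate point is the precise quantitative integrability of Mooney's example in terms of $\delta$, which I would take from the explicit self-similar form of his solution.
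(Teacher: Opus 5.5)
Your overall strategy is the right one: play $\L^p$ resolvent bounds off against Mooney's non-integrable solution after localizing. The step that carries your argument, however, does not follow from the estimates you set up. That step is a reverse H\"older inequality from $\L^2(\Delta_{2r})$ to $\L^{\tilde s}(\Delta_r)$, $\tilde s>2^\star$, for \emph{all} local weak solutions.

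\begin{itemize}
\item The $\L^2-\L^{\tilde s}$ bounds you derive hold only for a power $(\E^\star_\lambda)^m$. Your localization identity $\chi v=\E_r(\chi v)+r^2\E_r g$ contains a single resolvent. Nested cutoffs do not help, because the commutator terms $A\nabla_x v\cdot\nabla_x\chi$ and $\Div_x(Av\nabla_x\chi)$ of the innermost cutoff always come with exactly one resolvent in front.
\item For a general local solution, Caccioppoli only gives $\nabla_x v\in\L^2_{\mathrm{loc}}$. The single-resolvent estimates you list ($\L^2-\L^{2^\star}$, $\L^{(p')_\star}-\L^{p'}$ and their interpolants) send $\L^2$ data only into $\L^{2^\star}$.
\item The ``$\L^2-\L^{\tilde s}$ bounds of $\lambda\E_\lambda\Div_x$'' you invoke are not available. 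Duality with Theorem~\ref{thm: p_=q_} gives only $\L^\sigma-\L^\sigma$ bounds, and integrability gains were obtained only for powers of the resolvent.
\end{itemize}

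So as written this step does not close. Repairing it would need further single-resolvent estimates and a bootstrap that you have not supplied. It is also unnecessary.

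The idea you are missing is that no regularity statement for general solutions is needed. Mooney's profile and coefficients are smooth away from the origin and satisfy \eqref{eq: Mooney-bounds}. A cutoff whose gradient lives away from the singular point therefore produces a harmless right-hand side. The paper's route is:

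\begin{itemize}
\item Take the global weak solution $u$ and set $v=\e^{-t}u$, so that $(1+\H)v=0$ on $\R^{n+1}$. This fixes $\lambda=1$ and removes your term $\E_r(\chi v)$.
\item Localize with $\eta=\psi_1(x)\psi_2(t)$, equal to $1$ near the origin. Then $(1+\H)(\eta v)=f$.
\item After expanding $\Div_x(A(\nabla_x\eta)v)$, every term of $f$ lives where $\tfrac12\le|x|\le 1$. For $-\tfrac14<t<0$ one uses $|(\partial_iA)((-t)^{-1/2}x)|\,(-t)^{-1/2}\le C|x|^{-1}$; for $t>0$ the coefficients are constant. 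Hence $f$ is bounded with compact support.
\item Since $f\in\L^{2_\star}$, the global weak solution $\eta v$ lies in $\EE$ by \cite[Proposition~3.1]{ABES2/2019}. This settles the energy-space issue you flagged.
\item So $\eta v=(1+\H)^{-1}f$ with $f\in\L^{p'}\cap\L^2$, while $\eta v\notin\L^{p'}$ by \eqref{eq: Mooney-notLp} for $\mu$ close to $\nicefrac{n}{2}$.
\end{itemize}

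Thus the single resolvent $(1+\H)^{-1}$ is not $\L^{p'}$ bounded. By duality $(1+\H^\star)^{-1}$ is not $\L^p$ bounded, so $p\le p_-(\H^\star)$, and time reversal produces $A_\varepsilon$. No Theorem~\ref{thm: p_=q_}, Sobolev jumps, powers or off-diagonal estimates are involved.

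Finally, your parenthetical claim that Mooney's coefficients are real is wrong. For real $A$, the real and imaginary parts of $u$ would be locally bounded by De Giorgi--Nash--Moser. The paper itself proves $p_-(\H)=1$ in that case (Theorem~\ref{thm: A real}). The counterexample must have genuinely complex coefficients.
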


\begin{cor}
Assume that $n\ge 2$. For every $p\in [1,2_\star)$, there exists a parabolic operator $\H$ such that the Riesz transform $\mathcal{R}_\H=(\nabla_x \mathcal{H}^{-1/2}, D_t^{1/2}\mathcal{H}^{-1/2})$ is not bounded on $\L^p(\mathbb{R}^{n+1})$.
\end{cor}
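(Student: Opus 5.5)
The corollary follows by combining Proposition~\ref{prop: Sharpness} with the necessary condition, Proposition~\ref{prop: Necessary}. Fix $p \in [1,2_\star)$.

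If $p = 1$, the Riesz transform of \emph{any} parabolic operator $\H$ fails to be $\L^1$ bounded, so the statement is only of interest for $p>1$. For completeness at this endpoint, I would argue as follows. Take $\H = \partial_t - \Delta_x$. Its temporal component $D_t^{1/2}(\partial_t-\Delta_x)^{-1/2}$ is a Fourier multiplier. Boundedness on $\L^1$ would force its symbol to be continuous. But the symbol
\begin{equation*}
    \frac{|\tau|^{1/2}}{\sqrt{\ii\tau+|\xi|^2}}
\end{equation*}
has no limit at the origin: along $\xi=0$ it has modulus $1$, while along $\tau=0$ it vanishes. Alternatively, I would simply take the operator constructed below for some $p_1 \in (1,2_\star)$ and note that Riesz--Thorin interpolation between $\L^1$ and $\L^2$ would yield $\L^{p_1}$ boundedness, which is excluded.

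So assume $p\in(1,2_\star)$. Choose $\varepsilon \coloneqq (2_\star - p)/2 > 0$. Proposition~\ref{prop: Sharpness} provides coefficients $A_\varepsilon$ with
\begin{equation*}
    p_-(\H_\varepsilon) \geq 2_\star - \varepsilon > p.
\end{equation*}
If $\mathcal{R}_{\H_\varepsilon}$ were bounded on $\L^p(\R^{n+1})$, then, since $p \in (1,2)$, Proposition~\ref{prop: Necessary} would give $p \geq p_-(\H_\varepsilon)$, a contradiction. Hence $\H \coloneqq \H_\varepsilon$ is the required operator.

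The only step needing care is the endpoint $p=1$, which lies outside the scope of Proposition~\ref{prop: Necessary}; the interpolation argument above handles it using the case $p_1>1$ together with $\L^2$ boundedness from Theorem~\ref{thm: théorie L2}. All the substance lies in Proposition~\ref{prop: Sharpness} itself, which is taken as given here.
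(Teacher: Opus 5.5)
Your proof is correct and follows the paper's argument: for $p\in(1,2_\star)$ you choose $\varepsilon$ so that Proposition~\ref{prop: Sharpness} gives $p_-(\H_\varepsilon)>p$, and you then invoke the necessary condition of Proposition~\ref{prop: Necessary}, exactly as in the paper's one-line proof. Your separate treatment of the endpoint $p=1$ is also valid, either through Riesz--Thorin between $\L^1$ and $\L^2$ (reducing to some $p_1\in(1,2_\star)$) or through the discontinuity at the origin of the heat multiplier's symbol; it fills a point the paper leaves implicit, since Proposition~\ref{prop: Necessary} is stated only for $p\in(1,2)$, and your unproved aside that no parabolic Riesz transform is $\L^1$ bounded is not needed.
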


\begin{proof}
    This is an immediate consequence of Propositions \ref{prop: Necessary} and \ref{prop: Sharpness}. 
\end{proof}

The proof of Proposition \ref{prop: Sharpness} relies on Mooney's irregular weak solutions to uniformly parabolic equations from \cite[Section 2]{Mooney2021}, following arguments similar to those in \cite{BMV24}. For the convenience of the reader, we give a brief summary of the construction and the precise properties that will be needed for our purpose.

According to \cite[Theorem 2.2]{Mooney2021}, for every $\mu\in[0,\nicefrac{n}{2})$ there exists a uniformly elliptic matrix $A = A(x)$ such that the equation $\Div_x(A \nabla_x w)=\frac{1}{2}(\ii w+\mu w+x\cdot \nabla_x w)$ has a non-trivial weak solution $w$, where $w$ and $A$ are smooth outside the origin \cite[Remark 3.4]{Mooney2021} and are such that for $\abs{x} \geq 1$ we have the pointwise bounds
\begin{align}
\label{eq: Mooney-bounds}
	\abs{w(x)} \leq C \abs{x}^{-\mu} \quad \text{and} \quad	\abs{\partial_i A(x)} \leq C \abs{x}^{-1},
\end{align}
see \cite[p.~203]{BMV24}. Then, the function
\begin{align*}
	u(x,t) \coloneqq (-t)^{-\nicefrac{\mu}{2}} \e^{-\frac{\ii}{2}\log(-t)} w((-t)^{-\nicefrac{1}{2}}x)
\end{align*}
is smooth up to $t=0$ away from $x=0$ and a weak solution to the parabolic equation $\partial_t u- \Div_x(A \nabla_x u) = 0$ on $\R^n \times (-\infty,0)$ with coefficients $A = A((-t)^{-\nicefrac{1}{2}}x)$. It can be extended to a weak solution to a parabolic equation on all of $\R^{n+1}$ by solving the heat equation for suitable matching initial data at $t=0$ \cite{Mooney2021}. Moreover, given any $q>2^\star$ we can enforce
\begin{align}
\label{eq: Mooney-notLp}
    \norm{u}_{\L^{q}(B(0,\nicefrac{1}{2})\times(-\nicefrac{1}{2},0))}=\infty
\end{align}
by taking $\mu$ sufficiently close to $\nicefrac{n}{2}$. This follows either from a direct computation or from~\cite[p.~205]{BMV24}; note that \cite{BMV24} shifts the solution by $1$ in time.

\begin{proof}[Proof of Proposition \ref{prop: Sharpness}]
	Let $p < 2_\star$ and set $q \coloneqq p' > 2^\star$. It suffices to find $w \in \EE$ and an operator $\H$ such that 
    \begin{align}
    \label{eq: Mooney sharpness goal}
    (1+ \H)w \in \L^q(\R^{n+1}) \cap \L^2(\R^{n+1})
    \end{align}
    but $w \notin \L^q(\R^{n+1})$. Indeed, by duality this implies that $(1+\H^\star)^{-1}$ does not map $\L^p(\R^{n+1}) \cap \L^2(\R^{n+1})$ into itself and hence we must have $p \leq p_-(\H^\star)$. Interchanging the roles of $\H$ and $\H^\star$ yields the desired counterexample.
    
    We use Mooney's operator and pick $\mu\in[0,\nicefrac{n}{2})$ such that \eqref{eq: Mooney-notLp} holds. We set $v(x,t)\coloneqq \e^{-t}u(x,t)$, which is a weak solution to $v + \partial_t v - \Div_x(A \nabla_x v) =0$ on $\R^{n+1}$. Next, we pick cutoff functions $\psi_1 \in \Cont_\c^\infty(B(0,1))$ and $\psi_2 \in \Cont_\c^\infty((-1,1))$ with $\psi_1(x) = \psi_2(t) = 1$ for $|x|, |t| \leq \nicefrac{1}{2}$. With these choices, we set $\eta(x,t) \coloneqq \psi_1(x) \psi_2(t)$. Then, $w \coloneqq \eta v$ is a weak solution to
	\begin{align}
    \label{eq: Mooney sharpness goal weak}
		w + \partial_t w - \Div_x(A \nabla_x w) &= f,
    \end{align}
    where the right-hand side is given explicitly as
    \begin{align*}
		f &= (\partial_t \eta) v - A\nabla_x v \cdot \nabla_x \eta -\Div_x(A(\nabla_x {\eta}) v)\\
        &=(\partial_t \eta) v - A\nabla_x v \cdot \nabla_x \eta
        - \sum_{i=1}^n \langle\partial_i A_i, \nabla_x \eta \rangle  v - \sum_{i=1}^n \langle A_i, \partial_i \nabla_x \eta \rangle v - A \nabla_x \eta \cdot \nabla_x v
	\end{align*}
    and $A_i$ are the rows of $A$.
    
	It is immediate that $f$ is compactly supported away from the origin in space-time and that all terms on the right-hand side but the third one are bounded. If $(x,t)$ belongs to the support of the third term, then $\nicefrac{1}{2} \leq |x| \leq 1$ and we distinguish three cases for $t$ to see that also the third term is uniformly bounded: First, for $t >0$ the coefficients of $A$ are constant and the term vanishes. Second, for $t\leq -\nicefrac{1}{4}$ we obtain a uniform bound by continuity. Third, for $- \nicefrac{1}{4}< t < 0$ we have $0<(-t)^{\nicefrac{1}{2}} \leq \abs{x}$ and thus \eqref{eq: Mooney-bounds} yields
	\begin{align*}
		\bigg\vert \frac{\d}{\d x_i} A((-t)^{-\nicefrac{1}{2}} x))\bigg\vert = \bigg\vert(\partial_i A) \big((-t)^{-\nicefrac{1}{2}}x\big) (-t)^{-\nicefrac{1}{2}}\bigg\vert \leq C \abs{x}^{-1} \leq 2C.
	\end{align*}
	   Altogether, $f$ in \eqref{eq: Mooney sharpness goal weak} belongs to $\L^r(\R^{n+1})$ for every $r \in [1,\infty]$ but $w$ does not lie in $\L^q(\R^{n+1})$.

    In order to interpret \eqref{eq: Mooney sharpness goal weak} as \eqref{eq: Mooney sharpness goal}, we only need to make sure that $w$ lies in the energy space $\EE$. However, this is an automatic feature of global weak solutions since we have $f \in \L^{2_\star}(\R^{n+1})$, see, e.g., \cite[Proposition~3.1]{ABES2/2019}.
\end{proof}

\section{Open problems}\label{S10}

We conclude this paper by presenting several open problems that naturally arise from our results and merit further investigation in the context of parabolic Riesz transforms.

\begin{enumerate}[(1.)]
    \item \textbf{Weak type estimates at the endpoints:} Suppose that the coefficients of $A$ are real. Is $\D\mathcal{H}^{-1/2}$ of weak type (1,1), that is, does Theorem~\ref{thm: weak-type (1,1)} hold for the full Riesz transform?
    \item \textbf{Case $\boldsymbol{p>2}$:} Study boundedness of parabolic Riesz transforms for $p>2$. The natural conjecture is that the extrapolation range is limited from above by 
    \begin{align*}
        q_+(\mathcal{H})\coloneqq\sup\left\{ p\ge1: \ \left (\lambda \mathbb{D} \mathcal{E}_\lambda \right )_{\lambda>0} \ \text{is} \ \L^p \ \text{bounded} \right\}
    \end{align*}
    as in the elliptic setting of \cite{auscher2007necessary,AEbook2023}.
    \item \textbf{Reverse inequalities:} Find the range of exponents $p$ for which the reverse Riesz transform estimate $\|\H^{1/2}u\|_{p}\lesssim \|\mathbb{D}u\|_{p}$ holds. Let us note that combining part~(2) of Theorem~\ref{thm: Théorème principal} and Lemma~\ref{lem: many functions}, duality yields:
    \begin{cor}
    If $p\in [2,p_-(\H^\star)')$, then
    \begin{equation*}
        \norm{\H^{\nicefrac{1}{2}}u}_p \lesssim \norm{\D u}_p, \qquad u \in \Cont^\infty_0(\R^{n+1}).
    \end{equation*}
    \end{cor}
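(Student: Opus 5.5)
The plan is to dualize the Riesz transform bound for the adjoint operator. Let $p\in[2,p_-(\H^\star)')$ and write $p'\in(p_-(\H^\star),2]$. Since $\H^\star$ is conjugate to an operator in the same class via time reversal, part~(2) of Theorem~\ref{thm: Théorème principal} (Theorem~\ref{thm: Riesz sufficient}) applies to $\H^\star$. Hence $\mathcal{R}_{\H^\star}=\D(\H^\star)^{-1/2}$ extends to a bounded operator $\L^{p'}\to(\L^{p'})^{n+1}$. The first step is to record the pairing identity
\begin{align*}
\langle \H^{1/2}u, g\rangle = \langle \D u, \mathcal{R}_{\H^\star} g\rangle,
\end{align*}
which should hold for $u\in\Cont_0^\infty(\R^{n+1})\subseteq \EE=\dom(\H^{1/2})$ and $g$ in the dense class $\L^{2_\star}\cap\L^2\cap\L^{p'}$. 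Any Schwartz-type class suffices for $g$, and Lemma~\ref{lem: many functions} guarantees that such $g$ lie in $\operatorname{ran}((\H^\star)^{1/2})$.

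To prove the identity, I would write $g=(\H^\star)^{1/2}h$ with $h\in\EE$. Using $(\H^{1/2})^\star=(\H^\star)^{1/2}$, which is the adjoint relation in the sectorial functional calculus already quoted in the proof of Lemma~\ref{lem: many functions}, I get
\begin{align*}
\langle \H^{1/2}u,g\rangle=\langle \H^{1/2}u,(\H^\star)^{1/2}h\rangle=\langle \H u, h\rangle.
\end{align*}
Here $\langle\H u,h\rangle$ is interpreted via the sesquilinear form, because $u\in\dom(\H)$ need not hold. The cleaner route is to use $\H^{1/2}u$, $u\in\EE$, together with the identity $\langle \H^{1/2}u,(\H^\star)^{1/2}h\rangle = \mathfrak{h}(u,h)$ for the form. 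I would then need to express the form as a pairing of $\D u$ with something built from $\D h$. The form is $\iint H_tD_t^{1/2}u\cdot\overline{D_t^{1/2}h}+A\nabla_xu\cdot\overline{\nabla_xh}$, and this is not literally $\langle\D u,\D h\rangle$.

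This is the step I expect to be the main obstacle. To handle it, I would instead pair with a modified transform: $\mathfrak{h}(u,h)=\langle \D u, B\,\D h\rangle$, where $B=\operatorname{diag}(A^\star,\,H_t^\star)$ acts by multiplication in the spatial component and by the Hilbert transform in the time component. So $\langle\H^{1/2}u,g\rangle=\langle\D u, B\mathcal{R}_{\H^\star}g\rangle$. The operator $B$ is bounded on $\L^{p'}$ for $1<p'<\infty$: $A^\star$ is bounded, and $H_t$ is bounded on $\L^{p'}(\R^{n+1})$ by Fubini and the one-dimensional Hilbert transform theorem. Consequently,
\begin{align*}
|\langle \H^{1/2}u,g\rangle|\le \norm{\D u}_p\norm{B\mathcal{R}_{\H^\star}g}_{p'}\lesssim \norm{\D u}_p\norm{g}_{p'}.
\end{align*}

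Finally, I would take the supremum over $g$ in the dense class with $\norm{g}_{p'}\le1$. This gives $\norm{\H^{1/2}u}_p\lesssim\norm{\D u}_p$, provided $\H^{1/2}u\in\L^2$ can be tested against this dense subclass of $\L^{p'}$. That is fine, because $\H^{1/2}u\in\L^2$ and the class is dense in $\L^{p'}$ and in $\L^2$, so duality computes the $\L^p$ norm (finite or infinite) of an $\L^2$ function.
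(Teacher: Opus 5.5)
Your proof is correct and is the duality argument the paper intends: you apply Theorem~\ref{thm: Riesz sufficient} to $\H^\star$ on $\L^{p'}$, and Lemma~\ref{lem: many functions} supplies a dense test class inside $\operatorname{ran}((\H^\star)^{1/2})$. You also rightly replace the naive pairing by $\langle \H^{1/2}u,g\rangle=\langle \D u, B\,\mathcal{R}_{\H^\star}g\rangle$ with $B=\operatorname{diag}(A^\star,H_t^\star)$ bounded on $\L^{p'}$; this is exactly the detail the paper's one-line justification leaves implicit.
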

    \item \textbf{Sharpness in dimension $\boldsymbol{n=1}$:} Prove or disprove that $p_-(\H)=1$ holds in dimension $n=1$. A related question would be to find irregular weak solutions as in \cite{Mooney2021} to equations with complex coefficients in dimension $n=1$. 
    \item \textbf{Domains:} Study Riesz transforms on cylindrical domains. Here even $p=2$ is open, except under some additional smoothness assumptions in $t$~\cite{Ouhabaz21}. For the elliptic setting, see~\cite{BEH20}.
\end{enumerate}
Let us mention that we may also consider degenerate elliptic parts, with degeneracy governed by a spatial Muckenhoupt weight in the class $A_2(\R^n)$. In this case, the parabolic Kato square root estimate has been shown in \cite{AEN2025}. In a work in preparation \cite{Baa2026degenerate}, the first author proves boundedness results for the degenerate parabolic Riesz transform in the range $p\le 2$.

\appendix

\section{Proof of the parabolic Sobolev embedding}\label{annexe}

Here, we give a proof of Lemma \ref{lem: Sobolev}. Our first goal is to derive a representation for functions $u \in \EE$ that is similar to the one obtained in the elliptic setting by means of Riesz potentials. We denote the Fourier transform of a function $f \in \L^2(\R^{n+1})$ by $\hat{f}$ and its inverse transform by $\check{f}$ and collect a few auxiliary results:
\begin{enumerate}[(1.)]
	\item We first note that 
	\begin{align*}
				\mathcal{S}_0 \coloneqq \{u \in \mathcal{S}(\R^{n+1}): 0 \notin \supp(\widehat{u})\}
	\end{align*}
	is dense in $\EE$. Indeed, let $u \in \EE$. There is a sequence $(u_k)_k$ in $\mathcal{S}(\R^{n+1})$ with $u_k \to u$ in $\EE$. We pick smooth cutoff functions $0\le \theta_k \le 1$ such that $\theta_k = 1$ on $\R^{n+1}\setminus B(0,\frac{1}{k})$ and $0$ on $B(0,\frac{1}{2k})$ and put $v_k \coloneqq (\theta_k \widehat{u_k})^\vee$.
      
    We have $v_k \in \mathcal{S}_0$ and $\widehat{v_k} = \theta_k \widehat{u_k} \to \widehat{u}$ in $\L^2(\R^{n+1})$ by dominated convergence and so
	\begin{align*}
	\widehat{\nabla_x v_k} = \ii \xi \widehat{v_k} &\to \ii \xi \widehat{u} = \widehat{\nabla_x u},\\
	\widehat{D_t^{\nicefrac{1}{2}} v_k} = \abs{\tau}^{\nicefrac{1}{2}} \widehat{v_k} &\to \abs{\tau}^{\nicefrac{1}{2}} \widehat{u} = \widehat{D_t^{\nicefrac{1}{2}} u}
	\end{align*}
	in $\L^2(\R^{n+1})$ as well. Plancherel's theorem yields $v_k \to u$ in $\EE$.
	\item The function $(\ii \tau + \abs{\xi}^2)^{-\frac{1}{2}}$ is the Fourier transform of some other function $h$ such that its associated convolution operator is $\L^p-\L^{p^\star}$ bounded whenever $p \in (1,\infty)$, see \cite[Theorem 3.1]{GopalaRao}.
	\item The Fourier multiplier $T$ on $\R^{n+1}$ associated to
	\begin{align*}
		m \coloneqq \frac{(\ii \tau + \abs{\xi}^2)^\frac{1}{2}}{\abs{\tau}^\frac{1}{2} + \ii \abs{\xi}}
	\end{align*}
	is $\L^p$ bounded for all $p \in (1,\infty)$ by the Marcinkiewicz multiplier Theorem, see, e.g., \cite[Corollary 6.2.5]{GrafakosBook2014}.
\end{enumerate}
Now let $u \in \EE$ and pick a sequence $(u_k)_k$ in $\mathcal{S}_0$ with $u_k \to u$ in $\EE$. As $(\ii \tau + \abs{\xi}^2)^\frac{1}{2} \widehat{u_k}$ is again in $\mathcal{S}_0$, we obtain the representation
\begin{align*}
u_k &= \Bigl((\ii \tau + \abs{\xi}^2)^{-\frac{1}{2}}\ (\ii \tau + \abs{\xi}^2)^\frac{1}{2} \widehat{u_k} \Bigr)^\vee = h \ast \Bigl((\ii \tau + \abs{\xi}^2)^\frac{1}{2} \widehat{u_k} \Bigr)^\vee \\
		&= h \ast  \Biggl(\frac{(\ii \tau + \abs{\xi}^2)^\frac{1}{2}}{\abs{\tau}^\frac{1}{2} + \ii \abs{\xi}} \biggl(\abs{\tau}^\frac{1}{2} + \frac{\xi}{|\xi|} \cdot \ii \xi\biggr)\ \widehat{u_k}\Biggr)^\vee 
		=  h \ast T (D_t^{\nicefrac{1}{2}} u_k + R \nabla_x u_k),
\end{align*}
where $R$ is the classical Riesz transform in the $x$-variable.
Since $u_k \to u$ in $\EE$, we see that the left-hand side converges to $u$ in $\L^2(\R^{n+1})$, whereas the right-hand side converges to $h \ast T (D_t^{\nicefrac{1}{2}} u + R \nabla_x u)$ in $\L^{2^\star}(\R^{n+1})$. Consequently, 
\begin{align*}
    u = h \ast T(D_t^{\nicefrac{1}{2}} u + R \nabla_x u).
\end{align*}
Lemma~\ref{lem: Sobolev} follows since $\boldsymbol{f} \mapsto h \ast T \boldsymbol{f}$ is bounded from $\L^p(\R^{n+1})$ to $\L^{p^\star}(\R^{n+1})$ and $R$ is bounded on $\L^p(\R^{n+1})$. %
\nobreak\makebox[\linegoal][r]{\qedsymbol}
	
\subsubsection*{\textbf{Copyright}}

A CC-BY 4.0 \url{https://creativecommons.org/licenses/by/4.0/} public copyright license has been applied by the authors to the present document and will be applied to all subsequent versions up to the Author Accepted Manuscript arising from this submission.

\bibliographystyle{abbrv}
\bibliography{references.bib}

\end{document}